\documentclass[12pt,a4paper,reqno]{amsart} %Fr o m detta
\pagestyle{plain}
\usepackage{amssymb}
\usepackage{latexsym}
\usepackage{amsmath}
\usepackage{mathrsfs}
\usepackage[T1]{fontenc} % \usepackage[X2,T1]{fontenc}
\usepackage{cite}
\usepackage{calc}                   %Detta beh?vs f?r att
                    %%fixa%
                    %%%med numrering som man vill.

%\newcommand{\essinf}{\operatorname{ess\, inf}}
%\newcommand{\mathscr}[1]{\mathcal #1}

\newcommand{\scal}[2]{\langle #1,#2\rangle}
\newcommand{\rr}[1]{\mathbf R^{#1}}

\newcommand{\mabfq}{{\boldsymbol q}}

\newcommand{\zz}[1]{\mathbf Z^{#1}}
\newcommand{\nm}[2]{\Vert #1\Vert _{#2}}
\newcommand{\Nm}[2]{\left \Vert #1 \right \Vert _{#2}}

\newcommand{\sets}[2]{\{ \, #1\, ;\, #2\, \} }

\newcommand{\ep}{\varepsilon}
\newcommand{\fy}{\varphi}
\newcommand{\cdo}{\, \cdot \, }

\newcommand{\esssup}{\operatorname{ess\, sup}}

\newcommand{\eabs}[1]{\langle #1\rangle}     %%%%%   for <x>

\newcommand{\vrum}{\vspace{0.1cm}}

\newcommand{\Fo}[1]{\operatorname {FS}_{#1}}

\newcommand{\indlim}{\operatorname{ind \, lim\, }}
\renewcommand{\projlim}{\operatorname{proj \, lim\, }}

\newcommand{\rd}{\mathbf{R} ^{d}}

\newcommand{\nn}[1]{{\mathbf N}^{#1}}

\newcommand{\maclE}{\mathcal E}
\newcommand{\maclG}{\mathcal G}

\newcommand{\maclS}{\mathcal S}
\newcommand{\mascB}{\mathscr B}

\newcommand{\mascE}{\mathscr E}
\newcommand{\mascF}{\mathscr F}

\newcommand{\mascP}{\mathscr P}
\newcommand{\mascS}{\mathscr S}

\setcounter{section}{\value{section}-1}   %fixar sektionsnumer s? att
%                                         %dessa b?rjar med 0.

\numberwithin{equation}{section}          %Detta g?r att man f?r
%                                         %formelnummer av typ
%                                         %(sec.nr).

\newtheorem{thm}{Theorem}
\numberwithin{thm}{section}

\newcommand{\rubrik}{}
\newtheorem{prop}[thm]{Proposition}
\newtheorem{cor}[thm]{Corollary}
\newtheorem{lemma}[thm]{Lemma}

\theoremstyle{definition}

\newtheorem{defn}[thm]{Definition}
\newtheorem{example}[thm]{Example}

\theoremstyle{remark}
\newtheorem{rem}[thm]{Remark}

\author{Joachim Toft}

\address{Department of Mathematics,
Linn{\ae}us University, V{\"a}xj{\"o}, Sweden}

\email{joachim.toft@lnu.se}

\author{Elmira Nabizadeh}

\address{Department of Mathematics,
Linn{\ae}us University, V{\"a}xj{\"o}, Sweden}

\email{elmira.nabizadeh.extern@lnu.se}

\title{Periodic distributions and periodic elements in modulation spaces}

%
%\keywords{Hallo}
%
\subjclass[2010]{primary: 42B05, 42B35, 46F99, 46Exx
secondary: 46B40}

\frenchspacing

\begin{document}

\begin{abstract}
We characterize periodic elements in Gevrey classes,
Gelfand-Shilov distribution spaces and modulation spaces, in
terms of estimates of involved Fourier coefficients, and by estimates of
their short-time Fourier transforms.
If $q\in [1,\infty )$, $\omega$ is a suitable weight
and $(\maclE _0^E)'$ is the set of all $E$-periodic elements, then we prove that
the dual of $M^{\infty ,q}_{(\omega )}\cap (\maclE _0^E)'$ equals
$M^{\infty ,q'}_{(1/\omega )}\cap (\maclE _0^E)'$ by
suitable extensions of Bessel's identity.
\end{abstract}

\maketitle

\par

%%%%%%%%%%%%%%%%%%%%%%%
\section{Introduction}\label{sec0}
%%%%%%%%%%%%%%%%%%%%%%%

\par

A fundamental issue in analysis concerns periodicity.
For example, several problems in the theory of
partial differential equations and in signal processing involve
periodic functions and distributions. In such situations it is in general
possible to discretize the problems by means of Fourier series expansions
of these functions and distributions.

\par

We recall that if $f$ is a smooth $1$-periodic function on $\rr d$, then $f$ is equal to
its Fourier series
\begin{equation}\label{Eq:IntrFourExp}
\sum _{\alpha \in \zz d}c(\alpha )e^{2\pi i \scal \cdo \alpha},
\end{equation}
where the Fourier coefficients
$c(\alpha )$ can be evaluated by the formula
$$
c(f,\alpha )= c(\alpha ) = \int _{[0,1]^d}f(x)e^{-2\pi i\scal x\alpha}\, dx . 
$$
(Our investigations later on involve functions and distributions with more general periodics.
See also \cite{Ho1}, and Sections \ref{sec1} and \ref{sec2} for notations.)
By the smoothness of $f$ it follows that for every $N\ge 0$, there is a constant
$C_N\ge 0$ such that
\begin{equation}\label{Eq:PerSmoothCoeffCond}
|c(\alpha )| \le C_N\eabs \alpha ^{-N},
\end{equation}
and it follows from Weierstrass theorem that the series \eqref{Eq:IntrFourExp}
is uniformly convergent (cf. e.{\,}g. \cite[Section 7.2]{Ho1}).

\par

Assume instead that $f$ is a $1$-periodic distribution on $\rr d$, and let $\phi $
be compactly supported and smooth on $\rr d$ such that
\begin{equation}\label{Eq:PartUnity}
\sum _{k\in \zz d} \phi (\cdo -k) =1.
\end{equation}
Then $f$ is a tempered distribution and is still equal to its Fourier series
\eqref{Eq:IntrFourExp} in distribution sense. The Fourier coefficients
for $f$ are uniquely defined and can be computed by
\begin{equation}\label{Eq:FourCoeffHorm}
c(f,\alpha )= c(\alpha ) = \scal f{\phi e^{-i\scal \cdo \alpha }},
\end{equation}
and satisfy
\begin{equation}\label{Eq:PerDistCoeffCond}
|c(\alpha )| \le C\eabs \alpha ^{N},
\end{equation}
for some constants $C$ and $N$ which only depend on $f$. (Cf. e.{\,}g.
\cite[Section 7.2]{Ho1}. See also \cite{Zi} for an early approach to formal
 Fourier series expansions.)

\par

The conditions \eqref{Eq:PerSmoothCoeffCond} and
\eqref{Eq:PerDistCoeffCond} are not only necessary but also sufficient for
a formal Fourier series expansion \eqref{Eq:IntrFourExp} being smooth
respectively a tempered distribution. Hence, by a unique extension of
Parseval's identity
\begin{equation}\label{Eq:PerForm}
(f,\phi )_1 \equiv \sum _{\alpha \in \zz d} c(f,\alpha )\overline{c(\phi ,\alpha )}
= \int _{[0,1]^d} f(x)\overline{\phi (x)}\, dx
\end{equation}
on smooth $1$-periodic functions on $\rr d$, it follows that the dual of
the set of smooth $1$-periodic functions on $\rr d$ is the set of all
$1$-periodic tempered distributions on $\rr d$.

\par

Some investigations of periodicity in the framework of ultra-differentiability have also
been performed. More precisely, let $s>0$ and $\maclE _s^1(\rr d)$ be
the set of all $1$-periodic
functions in the Gevrey class $\maclE _s(\rr d)$ of Roumieu type. (Our investigations
later on also involve Gevrey classes of Beurling type.)
It is proved in \cite{Pil2} by Pilipovi{\'c} that a smooth $1$-periodic
function $f$ on $\rr d$ belongs
$\maclE _s^1(\rr d)$, if and only if its Fourier coefficients satisfy
$$
|c(\alpha )| \le Ce^{-r|\alpha |^{\frac 1s}},
$$
for some constants $C>0$ and $r>0$ which are independent of $\alpha$.

\par

Due to straight-forward extensions of Parseval's identity it follows that the dual
$(\maclE _s^1)'(\rr d)$
of $\maclE _s^1(\rr d)$ can be identify with all expansions \eqref{Eq:IntrFourExp} such that
for every constants $r>0$ there is a constant $C>0$ such that
$$
|c(\alpha )| \le Ce^{r|\alpha |^{\frac 1s}}.
$$
In the case $s\ge 1$, it seems to be shown by Gorba{\v c}uk and Gorba{\v c}uk in \cite{Go,GoGo},
and commented in \cite{Pil2} that the set of such formal Fourier series expansions coincide
with the set of $1$-periodic Gelfand-Shilov distributions $(\maclS _t^s)'(\rr d)$ when $t>1$.

\par

The previous properties have been extended and explained in different ways,
see e.{\,}.g. \cite[Theorem 2.3]{DaRu1} and \cite{DaRu2,FiRu,GaRu,RuTu1,RuTu2,RuTu3} by
Dasgupta, Fischer, Garetto, Ruzhansky and Turunen. For example,
in \cite{DaRu2}, it is shown that characterizations of the previous types also hold on
more general manifolds, e.{\,}g. compact ones. Here we remark that such (global)
characterizations of Gevrey spaces and ultradistributions in terms of Fourier coefficients are
used to prove the well-posedness and estimates for solutions to wave equations for
H{\"o}rmander's sums of squares in \cite{GaRu}.

\par

The aim of the paper is obtain analogous and other characterizations for periodic
functions in Gevrey classes, and for
periodic ultra-distributions. Especially we characterize periodic Gelfand-Shilov
distributions and periodic elements in modulation spaces, in terms of estimates
of their Fourier coefficients. At the same time we deduce an integral formula
for evaluating the Fourier coefficients, and which involve the short-time Fourier
transforms of the involved periodic distributions. Finally we show that the duals
of the periodic functions in Gevrey classes can be identified with
suitable classes of periodic Gelfand-Shilov distributions, and characterize
elements in these classes by suitable estimates on the short-time Fourier
transforms of the involved functions. In contrast to earlier contributions, our
characterizations hold when the Gevrey parameters belong to the interval
$(0,\infty )$ instead of the sub interval $[1,\infty )$.

%\par
%
%
%
%
%
%
%
%In Section \ref{sec2} we
%present a short and straight-forward proof, based on characterizations of
%Gelfand-Shilov spaces by means of suitable estimates on the elements,
%their Fourier transforms and their short-time Fourier
%transforms (cf. \cite{ChuChuKim,Eij,GZ,Toft18}).
%
%\par
%
%The dual situation is handled by suitable extensions of the Bessel's identity
%\eqref{Eq:PerForm} on $\maclE _s^1(\rr d)$. In fact, in Section
%\ref{sec2} we clarify that by such extensions, the dual $(\maclE _s^1)'(\rr d)$
%of $\maclE _s^1(\rr d)$ can be
%identified with the set of formal
%$1$-periodic Fourier series expansions
%%%
%\begin{equation}\label{Eq:IntrFourExp}
%\sum _{\alpha \in \zz d}c(\alpha )e^{2\pi i \scal \cdo \alpha}
%\end{equation}
%%%
%such that for every $r>0$
%there is a constant $C=C_r>0$ such that
%$$
%|c(\alpha )| \le Ce^{r|\alpha |^{\frac 1s}}.
%$$
%%
%%In Section \ref{sec2} we show that the Bessel's identity
%%$$
%%(f,\phi ) \equiv \sum _{\alpha \in \zz d} c(f,\alpha )\overline{c(\phi ,\alpha )}
%%= \int _{[0,1]^d} f(x)\overline{\phi (x)}\, dx
%%$$
%%extends to a duality between periodic elements in $\maclE _s(\rr d)$ and formal
%%Fourier series expansions whose Fourier coefficients satisfies that for every $r>0$
%%there is a constant $C=C_r>0$ such that
%%$$
%%|c(\alpha )| \le Ce^{r|\alpha |^{\frac 1s}}.
%%$$
%Note that formal trigonometric series expansions were considered in \cite{Zi},
%and that such characterizations are obtained in \cite{Go,Pil2} in restricted case when $s>1$.
%Furthermore, for $s\ge 1$ a more general result can be found in \cite{DaRu1}
%(cf. \cite[Theorem 2.5]{DaRu}).

\par

In Section \ref{sec2} we deduce other characterizations of
$\maclE _s^1(\rr d)$ and $(\maclE _s^1)'(\rr d)$. For example let
$\phi$ be a non-zero element in the Gelfand-Shilov space $\maclS _t^s(\rr d)$.
Then we show that
$f\in \maclE _s^1(\rr d)$, if and only if
$f$ is $1$-periodic ultra-distribution and that its short-time
Fourier transform $V_\phi f$ satisfies
$$
|V_\phi f(x,\xi )|\le Ce^{-r|\xi |^{\frac 1s}}
$$
for some constants $C>0$ and $r>0$. In the same way we show that
$f\in (\maclE _s^1)'(\rr d)$, if and only if
$f$ is $1$-periodic ultra-distribution and for every $r>0$ there is a constant
$C>0$ such that
$$
|V_\phi f(x,\xi )|\le Ce^{r|\xi |^{\frac 1s}}.
$$
At the same time we show (for any $s>0$) that $(\maclE _s^1)'(\rr d)$
may in canonical ways be identified with the set of
periodic elements in the Gelfand-Shilov distribution space $(\maclS _t^s)'(\rr d)$,
provided $t>0$ satisfies $s+t\ge 1$.

\par

An ingredient in the proofs of these properties is the formula
\begin{equation}\label{Eq:IntrSTFTDescrForm}
(f,\psi )_1 = \nm \phi {L^2}^{-2}
\int _{[0,1]^d}
\left (
\int _{\rr d} (V_\phi f)(x,\xi )\overline{(V_\phi \psi)(x,\xi )}\, d\xi
\right )
\, dx ,
\end{equation}
proved in Section \ref{sec2} when evaluating the form in \eqref{Eq:PerForm}. By letting
$\psi = e^{2\pi i\scal \cdo \alpha}$, it follows by straight-forward computations
that \eqref{Eq:IntrSTFTDescrForm} takes the form
$$
c(f,\alpha)
= \nm \phi{L^2({\rd})}^{-2} \int _{[0,1]^d} \left(\int _{\rd} (V_\phi f)(x,\xi)
\widehat{\phi}(\alpha - \xi) e^{-2\pi i\scal x {\alpha - \xi}} \,d\xi \right ) \, dx.
$$
Here the integrand belongs to $L^1([0,1]^d\times \rr d)$ due the deduced characterizations
of $(\maclE _s^1)'(\rr d)$.

\par

It seems to be difficult to find the previous formulae in the literature.
When using \eqref{Eq:FourCoeffHorm} to compute the Fourier coefficients,
it is essential that $\phi$ satisfies \eqref{Eq:PartUnity}. For these
reasons it is difficult to carry over \eqref{Eq:FourCoeffHorm}
to the Gevrey or Gelfrand-Shilov situation when $s$
above is less than $1$, since it is difficult to find $\phi \in \maclS _t^s(\rr d)$ which satisfies
\eqref{Eq:PartUnity}.

\par

In Section \ref{sec3} we characterize periodic distributions in modulation spaces.
In particular we deduce that if $q\in (0,\infty ]$ and
$\omega (x,\xi )=\omega _0(\xi )$ is a suitable weight on $\rr d$, then
the $1$-periodic elements in the modulation spaces
$M^{\infty ,q}_{(\omega )}$ and $W^{\infty ,q}_{(\omega )}$ agree and are equal to
the set of formal Fourier series expansions in \eqref{Eq:IntrFourExp}
such that
$$
\{ c(\alpha )\omega _0(\alpha )\} _{\alpha \in \zz d} \in \ell ^q.
$$
In particular we extend Proposition 2.6 in \cite{Re} and Proposition 5.1 in
\cite{RuSuToTo} to involve more general
weights and permit $q$ to be in the broader interval $(0,\infty ]$ instead of
$[1,\infty ]$.

\par

In the last part of Section \ref{sec3} we apply these results to deduce that if
$q\in [1,\infty )$ and $\frac 1q+\frac 1q' =1$, then the dual of $M^{\infty ,q}_{(\omega )}(\rr d)
\cap (\maclE _0^1)'(\rr d)$ is equal to $M^{\infty ,q'}_{(1/\omega )}(\rr d)
\cap (\maclE _0^1)'(\rr d)$ through suitable extensions of the form $(\cdo ,\cdo )_1$
on $\maclE _0^1(\rr d)\times \maclE _0^1(\rr d)$.

%We note that in contrast to
%
%for evaluating
%the form 
%which is
%not easy to find in the literature
%In Section \ref{sec2}
%
%
%
% Proposition 2.1.2 in \cite{FiRu}. Periodic functions/distributions in
% modulation spaces in \cite{ReReSi}.
%

\par

%%%%%%%%%%%%%%%%%%%%%%%
\section{Preliminaries}\label{sec1}
%%%%%%%%%%%%%%%%%%%%%%%

\par

In this section we recall some basic facts. We start by discussing
Gelfand-Shilov spaces and their properties. Thereafter we recall
some properties of modulation spaces and discuss different aspects
of periodic distributions

\par

\subsection{Gelfand-Shilov spaces and Gevrey classes}\label{subsec1.1}
%We start by recalling some facts on Gelfand-Shilov spaces.
Let $0<s,t\in \mathbf R$ be fixed. Then the Gelfand-Shilov
space $\mathcal S_{t}^s(\rr d)$
($\Sigma _{t}^s(\rr d)$) of Roumieu type (Beurling type) with parameters $s$
and $t$ consists of all $f\in C^\infty (\rr d)$ such that
\begin{equation}\label{gfseminorm}
\nm f{\mathcal S_{t,h}^s}\equiv \sup \frac {|x^\beta \partial ^\alpha
f(x)|}{h^{|\alpha  + \beta |}\alpha !^s\, \beta !^t}
\end{equation}
is finite for some $h>0$ (for every $h>0$). Here the supremum should be taken
over all $\alpha ,\beta \in \mathbf N^d$ and $x\in \rr d$. We equip
$\mathcal S_{t}^s(\rr d)$ ($\Sigma _{t}^s(\rr d)$) by the canonical inductive limit
topology (projective limit topology) with respect to $h>0$, induced by
the semi-norms in \eqref{gfseminorm}.

\par

For any $s,t,s_0,t_0>0$ such that $s>s_0$, $t>t_0$ and $s+t\ge 1$ we have
\begin{equation}\label{GSembeddings}
\begin{alignedat}{3}
\maclS _{t_0}^{s_0}(\rr d)
&\hookrightarrow &
\Sigma _{t}^{s}(\rr d)
&\hookrightarrow &
\maclS _t^s(\rr d)
&\hookrightarrow 
\mascS (\rr d),
\\[1ex]
\mascS '(\rr d)
&\hookrightarrow &
(\maclS _t^s)' (\rr d)
&\hookrightarrow &
(\Sigma _{t}^{s})'(\rr d)
&\hookrightarrow
(\maclS _{t_0}^{s_0})'(\rr d),
\end{alignedat}
\end{equation}
with dense embeddings.
Here and in what follows we use the notation $A\hookrightarrow B$ 
when the topological spaces $A$ and $B$ satisfy $A\subseteq B$ with
continuous embeddings.
The space $\Sigma _t^s(\rr d)$ is a Fr{\'e}chet space
with seminorms $\nm \cdo{\mathcal S_{t,h}^s}$, $h>0$. Moreover,
$\Sigma _t^s(\rr d)\neq \{ 0\}$, if and only if $s+t\ge 1$ and
$(s,t)\neq (\frac 12,\frac 12)$, and $\maclS _t^s(\rr d)\neq \{ 0\}$, if and only
if $s+t\ge 1$.

\medspace

The \emph{Gelfand-Shilov distribution spaces} $(\mathcal S_t^{s})'(\rr d)$
and $(\Sigma _t^s)'(\rr d)$ are the dual spaces of $\mathcal S_t^{s}(\rr d)$
and $\Sigma _t^s(\rr d)$, respectively.  As for the Gelfand-Shilov spaces there 
is a canonical projective limit topology (inductive limit topology) for $(\maclS _t^{s})'(\rr d)$ 
($(\Sigma _t^s)'(\rr d)$).(Cf. \cite{GS, Pil1, Pil3}.)
For conveniency we set
$$
\maclS _s=\maclS _s^s,\quad \maclS _s'=(\maclS _s^s)',\quad
\Sigma _s=\Sigma _s^s
\quad \text{and}\quad
\Sigma _s'=(\Sigma _s^s)'.
$$

\par

From now on we let $\mathscr F$ be the Fourier transform which
takes the form
$$
(\mathscr Ff)(\xi )= \widehat f(\xi ) \equiv (2\pi )^{-\frac d2}\int _{\rr
{d}} f(x)e^{-i\scal  x\xi }\, dx
$$
when $f\in L^1(\rr d)$. Here $\scal \cdo \cdo$ denotes the usual
scalar product on $\rr d$. The map $\mathscr F$ extends 
uniquely to homeomorphisms on $\mathscr S'(\rr d)$,
from $(\mathcal S_t^s)'(\rr d)$ to $(\mathcal S_s^t)'(\rr d)$ and
from $(\Sigma _t^s)'(\rr d)$ to $(\Sigma _s^t)'(\rr d)$. Furthermore,
$\mascF$ restricts to
homeomorphisms on $\mathscr S(\rr d)$, from
$\mathcal S_t^s(\rr d)$ to $\mathcal S_s^t(\rr d)$ and
from $\Sigma _t^s(\rr d)$ to $\Sigma _s^t(\rr d)$,
and to a unitary operator on $L^2(\rr d)$. 

\par

Gelfand-Shilov spaces can in convenient
ways be characterized in terms of estimates the functions and their Fourier
transforms. More precisely, in \cite{ChuChuKim, Eij} it is proved that
if $f\in \mascS '(\rr d)$ and $s,t>0$, then $f\in \maclS _t^s(\rr d)$
($f\in \Sigma _t^s(\rr d)$), if and only if
\begin{equation}\label{Eq:GSFtransfChar}
|f(x)|\lesssim e^{-r|x|^{\frac 1t}}
\quad \text{and}\quad
|\widehat f(\xi )|\lesssim e^{-r|\xi |^{\frac 1s}},
\end{equation}
for some $r>0$ (for every $r>0$). Here and in what follows, $A\lesssim B$
means that $A\le cB$ for a suitable
constant $c>0$. We also set $A\asymp B$ when $A\lesssim B$
and $B\lesssim A$.

\par

Gelfand-Shilov spaces and their distribution spaces can also
be characterized by estimates of short-time Fourier
transforms, (see e.{\,}g. \cite{GZ,Toft18}).
More precisely, let $\phi \in \maclS _s (\rr d)$ be fixed. Then the \emph{short-time
Fourier transform} $V_\phi f$ of $f\in \maclS _s '
(\rr d)$ with respect to the \emph{window function} $\phi$ is
the Gelfand-Shilov distribution on $\rr {2d}$, defined by
$$
V_\phi f(x,\xi )  =
\mascF (f \, \overline {\phi (\cdo -x)})(\xi ).
$$
If $f ,\phi \in \maclS _s (\rr d)$, then it follows that
$$
V_\phi f(x,\xi ) = (2\pi )^{-\frac d2}\int f(y)\overline {\phi
(y-x)}e^{-i\scal y\xi}\, dy .
$$

\par

\begin{rem}
Let $s_1,s_2,t_1,t_2>0$. Then the 
Gelfand-Shilov space $\maclS_{t_1,t_2}^{s_1,s_2}(\rr {2d})$
($\Sigma_{t_1,t_2}^{s_1,s_2}(\rr {2d})$)
is the set of all $F\in C^{\infty}(\rr {2d})$ such that 
\begin{equation*}
\nm {x_1^{\alpha_1} x_2^{\alpha_2} \partial_{x_1}^{\beta_1}
\partial_{x_2}^{\beta_2} F}{L^{\infty}}
\lesssim h^{|\alpha_1+\alpha_2+\beta_1+\beta_2|}
\alpha_1 !^{t_1}\alpha_2 !^{t_2}\beta_1 !^{s_1}\beta_2 !^{s_2}
\end{equation*}
for some $h>0$ (for every $h>0$). 

\par

We also let $(\maclS_{t_1,t_2}^{s_1,s_2})'(\rr {2d})$
($(\Sigma_{t_1,t_2}^{s_1,s_2})'(\rr {2d})$) be corresponding duals
(distribution spaces).

\par

By \cite[Theorem 2.3]{Toft10} it follows that the definition of the map
$(f,\phi)\mapsto V_{\phi} f$ from $\mascS (\rd) \times \mascS (\rd)$ 
to $\mascS(\rr {2d})$ is uniquely extendable to a continuous map from 
$(\maclS_{t}^{s})'(\rd)\times (\maclS_{t}^{s})'(\rd)$
to $(\maclS_{t,s}^{s,t})'(\rr {2d})$, and restricts to a continuous map
from $\maclS_{t}^{s}(\rd)\times \maclS_{t}^{s}(\rd)$
to $\maclS_{t,s}^{s,t}(\rr {2d})$.

\par

The same conclusion holds with $\Sigma_t^s$ and $\Sigma_{t,s}^{s,t}$ 
in place of $\maclS_{t}^{s}$ and $\maclS_{t,s}^{s,t}$, respectively, at each place.
\end{rem}

\par

The following properties characterize Gelfand-Shilov spaces and their distribution
spaces in terms of estimates of short-time fourier transform.

\par

\begin{prop}\label{stftGelfand2}
Let $s,t>0$ be such that $s+t\ge 1$. Also let
$\phi \in \mathcal S_{t}^{s}(\rr d)\setminus 0$ ($\phi \in \Sigma_{t}^{s}(\rr d)\setminus 0$) and
$f$ be a Gelfand-Shilov distribution on $\rd$.
Then $f\in  \mathcal S_{t}^s(\rr d)$ ($f\in  \Sigma_{t}^s(\rr d)$), if and only if
\begin{equation}\label{stftexpest2}
|V_\phi f(x,\xi )| \lesssim  e^{-r (|x|^{\frac 1t}+|\xi |^{\frac 1s})},
\end{equation}
for some $r > 0$ (for every $r>0$).
\end{prop}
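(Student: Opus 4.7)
The plan is to reduce the claim to the known characterization \eqref{Eq:GSFtransfChar} of Gelfand-Shilov spaces by simultaneous exponential decay of a function and its Fourier transform, using the definition $V_\phi f(x,\xi) = \mathscr F\bigl(f\,\overline{\phi(\cdot - x)}\bigr)(\xi)$ and the Moyal/resolution-of-identity formula for the short-time Fourier transform. Throughout, one uses the elementary inequality
$$
|y|^{\frac 1t} + |y-x|^{\frac 1t} \ge c_t\bigl(|x|^{\frac 1t} + |y|^{\frac 1t}\bigr),\qquad t>0,
$$
(with analogous form in $s$), which holds with some $c_t>0$ independent of $x,y$.

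For the forward direction, assume $f\in \maclS_t^s(\rr d)$ and fix a window $\phi \in \maclS_t^s(\rr d)\setminus 0$. Then both $f$ and $\phi$ satisfy the decay estimates in \eqref{Eq:GSFtransfChar}, so $g_x(y):=f(y)\overline{\phi(y-x)}$ obeys $|g_x(y)|\lesssim e^{-r(|y|^{1/t}+|y-x|^{1/t})}$. The inequality above gives $|g_x(y)|\lesssim e^{-r'|x|^{1/t}}e^{-r'|y|^{1/t}}$, so integration in $y$ yields the desired $x$-decay of $V_\phi f(x,\xi)$. The $\xi$-decay is obtained by applying the same argument on the Fourier side, exploiting the identity
$$
V_\phi f(x,\xi) = e^{-i\scal x\xi}\, V_{\widehat\phi}\widehat f(\xi,-x)
$$
together with $\widehat f,\widehat\phi \in \maclS_s^t(\rr d)$ (with the roles of $s,t$ and $x,\xi$ interchanged). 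Combining the two estimates after splitting the constant $r$ into two halves delivers \eqref{stftexpest2}.

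For the converse, assume \eqref{stftexpest2} holds. Use the inversion formula
$$
f = \nm \phi{L^2}^{-2}\iint V_\phi f(x,\xi)\, M_\xi T_x\phi\, dx\, d\xi ,
$$
which is legitimate for Gelfand-Shilov distributions provided the integrand has suitable absolute convergence (guaranteed by our hypothesis and the decay of $\phi$). Pointwise this gives
$$
|f(y)| \lesssim \iint e^{-r(|x|^{\frac 1t}+|\xi|^{\frac 1s})}|\phi(y-x)|\,dx\,d\xi ;
$$
the $\xi$-integral is finite, and the $x$-integral is handled by the same splitting inequality, applied now to $|x|^{1/t}$ and $|y-x|^{1/t}$, producing $|f(y)|\lesssim e^{-r'|y|^{1/t}}$. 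Taking the Fourier transform of the inversion formula and using $\mascF\bigl(\phi(\cdot-x)e^{i\scal \cdot \xi}\bigr)(\eta)=e^{-i\scal x{\eta-\xi}}\widehat\phi(\eta-\xi)$ yields an analogous representation which, by the symmetric inequality applied to $|\xi|^{1/s}$ and $|\eta-\xi|^{1/s}$ together with the decay of $\widehat\phi\in \maclS_s^t$, gives $|\widehat f(\eta)|\lesssim e^{-r'|\eta|^{1/s}}$. By \eqref{Eq:GSFtransfChar} we conclude $f\in \maclS_t^s(\rr d)$. The Beurling ($\Sigma_t^s$) variant is obtained by replacing \emph{for some $r>0$} by \emph{for every $r>0$} in each step.

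The main obstacle is to ensure that the reconstruction integrals converge in a sense strong enough to deliver pointwise estimates on $f$ and $\widehat f$, rather than just weak identities. Since $f$ is only assumed a priori to be a Gelfand-Shilov distribution, one must first verify, using the STFT decay \eqref{stftexpest2} together with standard density/duality in $\maclS_t^s$, that the integrand defines an absolutely convergent Bochner integral in $C(\rr d)\cap L^1(\rr d)$, after which the pointwise bounds and the characterization \eqref{Eq:GSFtransfChar} can be invoked. A related technical point is the uniform control, in $x$, of the constants in the decay of $g_x$ and $\widehat{g_x}$, which is exactly what is provided by the splitting inequality above.
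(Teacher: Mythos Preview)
The paper does not actually prove this proposition; it simply records it as a known result and cites Gr{\"o}chenig--Zimmermann \cite{GZ} (specifically \cite[Theorem 2.7]{GZ}) for the proof. Your sketch follows precisely the standard strategy used there: the forward direction via the pointwise decay of $f$ and $\phi$ (respectively $\widehat f$ and $\widehat\phi$) combined with the sub-additivity-type inequality for $|y|^{1/t}+|y-x|^{1/t}$, and the converse via the STFT inversion formula together with the characterization \eqref{Eq:GSFtransfChar}. The device of obtaining separate bounds in $x$ and in $\xi$ and then taking the geometric mean to get the joint exponential decay is also standard and correct in both the Roumieu and Beurling settings.

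The one point that deserves slightly more care than you give it is the forward $\xi$-decay step: you invoke the identity $V_\phi f(x,\xi)=e^{-i\scal x\xi}V_{\widehat\phi}\widehat f(\xi,-x)$ and then ``the same argument''. That is fine, but note that the roles of $s$ and $t$ swap, so the splitting inequality must be applied with exponent $1/s$ on the Fourier side; you say this but it is worth making explicit that the constant $r$ one obtains depends on $s$ and $t$ separately. The technical issue you flag about justifying the inversion formula pointwise is real but routine: under the hypothesis \eqref{stftexpest2} the integrand $V_\phi f(x,\xi)\,\phi(y-x)e^{i\scal y\xi}$ is absolutely integrable in $(x,\xi)$ uniformly for $y$ in compact sets, so the reconstructed $f$ is a continuous function and the weak identity upgrades to a pointwise one. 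With these remarks your argument is complete and matches the literature.
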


\par

\begin{prop}\label{stftGelfand2dist}
Let $s,t>0$ be such that $s+t\ge 1$. Also let
$\phi \in \mathcal S_{t}^{s}(\rr d)\setminus 0$ ($\phi \in \Sigma_{t}^{s}(\rr d)\setminus 0$) and
$f$ be a Gelfand-Shilov distribution on $\rd$.
Then $f\in  (\mathcal S_{t}^s)'(\rr d)$($f\in (\Sigma^s_{t})'(\rd)$), if and only if
\begin{equation}\label{stftexpest2Dist}
|V_\phi f(x,\xi )| \lesssim  e^{r(|x|^{\frac 1t}+|\xi |^{\frac 1s})},
\end{equation}
for every $r > 0$ (for some $r > 0$).
\end{prop}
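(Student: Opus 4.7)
My plan is to prove the two directions via the duality between growth and decay properties of the STFT, in complete parallel to the proof of Proposition \ref{stftGelfand2}. The Roumieu and Beurling cases are analogous, so I describe the Roumieu case; the Beurling case follows by swapping the roles of ``for some $r$'' and ``for every $r$'' in each step.

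For necessity, I would exploit the definition of the STFT at the pairing level. Up to the factor $(2\pi)^{-d/2}$, $V_\phi f(x,\xi)$ equals the action of $f$ on the modulated and translated window $\overline{M_\xi T_x \phi}$, where $(M_\xi T_x \phi)(y) = e^{i\scal y\xi}\phi(y-x)$. Since $f\in (\maclS_t^s)'(\rr d)$, there exist $C, h > 0$ such that $|V_\phi f(x,\xi)| \le C\nm{M_\xi T_x\phi}{\maclS_{t,h'}^{s}}$ for some suitable $h' > 0$. The task reduces to showing that this seminorm grows at most like $e^{r(|x|^{1/t}+|\xi|^{1/s})}$ for a fixed $r > 0$ (with $r$ adjustable by shrinking $h'$ so that $r$ can be taken arbitrarily small in the Roumieu case). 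This follows from the Leibniz rule applied to $\partial^\alpha (e^{i\scal \cdot\xi}\phi(\cdot - x))$, combined with $|y^\beta|\le 2^{|\beta|}(|y-x|^{|\beta|}+|x|^{|\beta|})$ and standard factorial inequalities, together with the characterization \eqref{Eq:GSFtransfChar} of $\phi$ implying sub-exponential bounds in $x$ from the translation and in $\xi$ from the modulation/Fourier estimate.

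For sufficiency, I would use the STFT inversion identity (a Moyal-type formula): for $\psi \in \maclS_t^s(\rr d)$,
\begin{equation*}
\scal f\psi = \nm \phi{L^2}^{-2}\iint_{\rr{2d}} V_\phi f(x,\xi)\,\overline{V_\phi \psi(x,\xi)}\, dx\, d\xi ,
\end{equation*}
which is well defined in the current framework because, by the Remark preceding the proposition, $V_\phi f \in (\maclS_{t,s}^{s,t})'(\rr{2d})$ and $V_\phi \psi \in \maclS_{t,s}^{s,t}(\rr{2d})$. By Proposition \ref{stftGelfand2}, $|V_\phi\psi(x,\xi)|\lesssim e^{-r_0(|x|^{1/t}+|\xi|^{1/s})}$ for some $r_0 > 0$ (Roumieu) or every $r_0 > 0$ (Beurling). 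Combining with the assumed growth bound on $V_\phi f$, the integrand is dominated by $e^{-(r_0-r)(|x|^{1/t}+|\xi|^{1/s})}$ with $r_0 > r$ in both cases, hence integrable. This yields an estimate $|\scal f\psi| \le C \nm \psi{\maclS_{t,h}^s}$ (the constants $r_0$ of $V_\phi\psi$ are controlled by seminorms of $\psi$, which again follows from Proposition \ref{stftGelfand2}), proving continuity of $f$ on $\maclS_t^s(\rr d)$.

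The main obstacle, as usual in such STFT characterizations, is rigorously justifying the inversion formula in the distributional setting and tracking the quantifier structure so that the estimates match the precise definitions of the Roumieu and Beurling inductive and projective limit topologies. The density statements \eqref{GSembeddings} and the continuous extension of the STFT supplied by the Remark allow one to reduce the inversion identity to its classical version on $\mascS$, and the bookkeeping between ``some $r$'' and ``every $r$'' is then straightforward once both directions of Proposition \ref{stftGelfand2} are in hand.
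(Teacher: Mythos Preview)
The paper does not actually prove this proposition; immediately after stating it, the authors write that a proof ``in the general situation can be found in \cite{Toft18}'' (and in \cite{GZ} for the companion Proposition \ref{stftGelfand2}). So there is no paper-internal argument to compare against.

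Your sketch follows the standard route used in those references: for necessity, bound $|V_\phi f(x,\xi)|=|\scal f{\overline{M_\xi T_x\phi}}|$ by a Gelfand--Shilov seminorm of $M_\xi T_x\phi$ and track how translation and modulation inflate that seminorm; for sufficiency, feed the growth bound on $V_\phi f$ and the decay bound on $V_\phi\psi$ from Proposition \ref{stftGelfand2} into the STFT inversion (Moyal) identity. This is correct in outline. Two points deserve a bit more care than you indicate. First, in the necessity direction you must check that if $\phi\in\maclS_{t,h_0}^s$ then $M_\xi T_x\phi$ stays in a fixed $\maclS_{t,h}^s$ (with $h$ depending on $h_0$ but not on $(x,\xi)$), and that the resulting $r$ in the exponential bound can be driven to $0$ by letting $h\to 0$; this is what distinguishes the Roumieu dual estimate (``for every $r$'') from the Beurling one. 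Second, in the sufficiency direction your sentence ``the constants $r_0$ of $V_\phi\psi$ are controlled by seminorms of $\psi$'' is doing real work: you need a quantitative form of Proposition \ref{stftGelfand2}, namely an estimate of the type $|V_\phi\psi(x,\xi)|\le C_h\,\nm\psi{\maclS_{t,h}^s}\,e^{-r_0(h)(|x|^{1/t}+|\xi|^{1/s})}$, in order to conclude continuity on each Banach layer $\maclS_{t,h}^s$ (Roumieu) or on the Fr{\'e}chet space $\Sigma_t^s$ (Beurling). Both refinements are routine and are exactly what is carried out in \cite{Toft18}.
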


\par

We note that if $s=t=\frac 12$ in Propositions \ref{stftGelfand2} and \ref{stftGelfand2dist},
then it is not possible to find any $\phi \in \Sigma _t^s(\rr d)\setminus 0$. Hence, these
results give no information in the Beurling case for such choices of $s$ and $t$.

\par

A proof of Proposition \ref{stftGelfand2} can be found in
e.{\,}g. \cite{GZ} (cf. \cite[Theorem 2.7]{GZ}) and a proof of
Proposition \ref{stftGelfand2dist} in the general situation
can be found in \cite{Toft18}. See also \cite{CPRT10} for related results.

\par

In Section \ref{sec2} we deduce analogous characterizations for
periodic functions and distributions.

\par

\begin{rem}\label{SchwFunctionSTFT}
The short-time Fourier transform can also be used to identify
elements in $\mascS (\rr d)$ and in $\mascS '(\rr d)$. In
fact, if $\phi \in \mascS (\rr d)\setminus 0$ and $f$ is a Gelfand-Shilov
distribution on $\rr d$, then the following is true:
\begin{enumerate}
\item $f\in \mascS (\rr d)$, if and only if for every $N\ge 0$,
it holds
$$
|V_\phi f(x,\xi )| \lesssim \eabs {(x,\xi )}^{-N} \text ;
$$

\vrum

\item $f\in \mascS '(\rr d)$, if and only if for some $N\ge 0$,
it holds
$$
|V_\phi f(x,\xi )| \lesssim \eabs {(x,\xi )}^{N} .
$$
\end{enumerate}
(Cf. \cite[Chapter 12]{Gc2}.)
\end{rem}

\par

Next we consider Gevrey classes on $\rd$. Let $s\ge 0$.
For any compact set $K\subseteq \rd$, $h>0$ and $f\in C^{\infty}(\rd)$ let
\begin{equation}\label{e2}
\nm {f}{K,h,s} \equiv \underset{\alpha\in \nn d}\sup
\frac{\nm {\partial^{\alpha}f}{L^{\infty}(K)}}{h^{\vert \alpha\vert}\alpha ! ^s}.
\end{equation} 
The Gevrey class $\maclE _s(K)$ ($\maclE _{0,s}(K)$) of order $s$ and
of Roumieu type (of Beurling type) is the set of all
$f\in C^{\infty}(K)$ such that \eqref{e2} is finite for some (for every)
$h>0$. We equipp $\maclE _s(K)$ ($\maclE _{0,s}(K)$) by
the inductive (projective) limit topology supplied by the seminorms
in \eqref{e2}. Finally if $\lbrace K_j\rbrace_{j\geq 1}$ is an exhausted
sets of compact subsets of $\rd$, then let
\begin{alignat*}{3}
\maclE _s(\rd) &= \underset{j}\projlim \maclE _s(K_j)
& \quad &\text{and} &\quad
\maclE _{0,s}(\rd) &= \underset{j}\projlim \maclE _s(K_j).
\intertext{In particular,}
\maclE _s(\rd) &=\underset{j\geq 1}\bigcap \maclE _s(K_j)
& \quad &\text{and} &\quad
\maclE _{0,s}(\rd) &= \underset{j\geq 1}\bigcap \maclE _{0,s}(K_j).
\end{alignat*}
It is clear that $\maclE _{0}(\rr d)$ contains
all trigonometric polynomials, which is not the case for $\maclE _{0,0}(\rr d)$.
% consists of all constant functions 
%on $\rr d$, while
%
%\maclE _{0,0}(\rr d) contains all polynomials.
%

\par

\subsection{Modulation spaces}\label{subsec1.2}

\par

We consider a general class of modulation spaces (cf. \cite{Fei5}),
and begin with discussing general properties for the involved weight
functions. A \emph{weight} on $\rr d$ is a positive function $\omega
\in  L^\infty _{loc}(\rr d)$ such that $1/\omega \in  L^\infty _{loc}(\rr d)$.
%, and for each compact set $K\subseteq
%\rr d$, there is a constant $c>0$ such that
%$$
%\omega (x)\ge c\qquad \text{when}\qquad x\in K.
%$$
A usual condition on $\omega$ is that it should be \emph{moderate},
or \emph{$v$-moderate} for some positive function $v \in
 L^\infty _{loc}(\rr d)$. This means that
\begin{equation}\label{moderate}
\omega (x+y) \lesssim \omega (x)v(y),\qquad x,y\in \rr d.
\end{equation}
We note that \eqref{moderate} implies that $\omega$ fulfills
the estimates
\begin{equation}\label{moderateconseq}
v(-x)^{-1}\lesssim \omega (x)\lesssim v(x),\quad x\in \rr d.
\end{equation}
We let $\mascP _E(\rr d)$ be the set of all moderate weights on $\rr d$.
%Furthermore, if $v$ in \eqref{moderate} can be chosen as a polynomial,
%then $\omega$ is called \emph{polynomially moderate}, or a weight of
%\emph{polynomial type}. We let
%$\mascP (\rr d)$ be the set of all weights of polynomial type.

\par

It can be proved that if $\omega \in \mascP _E(\rr d)$, then
$\omega$ is $v$-moderate for some $v(x) = e^{r|x|}$, provided the
positive constant $r$ is large enough (cf. \cite{Gc2.5}). In particular,
\eqref{moderateconseq} shows that for any $\omega \in \mascP
_E(\rr d)$, there is a constant $r>0$ such that
$$
e^{-r|x|}\lesssim \omega (x)\lesssim e^{r|x|},\quad x\in \rr d.
$$

\par

We say that $v$ is
\emph{submultiplicative} if $v$ is even and \eqref{moderate}
holds with $\omega =v$. In the sequel, $v$ and $v_j$ for
$j\ge 0$, always stand for submultiplicative weights if
nothing else is stated.

\par

\begin{defn}\label{Def:InvSpaces}
Let $r\in (0,1]$, $v\in \mascP _E(\rr {d})$ 
and let $\mascB \subseteq L^r_{loc}(\rr {d})$
be a quasi-Banach space. Then $\mascB$ is called $v$-invariant on $\rr d$ if
the following is true:
\begin{enumerate}
\item $x\mapsto f(x+y)$ belongs to $\mascB$ for every $f\in \mascB$
and $y\in \rr {d}$.

\vrum

\item There is a constant $C>0$ such that $\nm {f_1}{\mascB}\le C\nm {f_2}{\mascB}$
when $f_1,f_2\in \mascB$ are such that $|f_1|\le |f_2|$. Moreover,
$$
\nm {f(\cdo +y)}{\mascB}\lesssim \nm {f}{\mascB}v(y),\qquad
f\in \mascB ,\ y\in \rr {d}.
$$
\end{enumerate}
\end{defn}

\par

The quasi-Banach spaces in the previous definition is usually a
mixed quasi-normed Lebesgue space, given in Definition \ref{Def:MixedLebSpaces}
below.
Here $\operatorname S_d$ is the set of permutations on $\{ 1,\dots ,d\}$,
$$
\max \mabfq =\max (q_1,\dots ,q_d)
\quad \text{and}\quad
\min \mabfq =\min (q_1,\dots ,q_d),
$$
when $\mabfq =(q_1,\dots ,q_d)\in (0,\infty ]^d$.

\par

Here we also let $E$ be a non-degenerate parallelepiped in $\rd$. That is, there is a basis
$e_1,\dots,e_d$ of $\rr d$ such that
$$
E = \sets{x_1e_1+\cdots+x_de_d}{(x_1,\dots,x_d)\in\rd,\ 0\leq x_k\leq 1,\
k=1,\dots,d}.
$$
The corresponding lattice, dual parallelepiped and dual lattice are given by
\begin{align*}
\Lambda _E &=\sets{j_1e_1+\cdots +j_de_d}{(j_1,\dots,j_d)\in \zz d},
\\[1ex]
E' &=\sets {\xi _1e'_1+\cdots+\xi _de'_d}{(\xi _1,\dots ,\xi _d)\in\rd,
\ 0\leq \xi _k\leq 1,\ k=1,\dots ,d},
\intertext{and}
\Lambda'_E &= \Lambda_{E'}=\sets{\iota _1e'_1+\cdots +\iota _de'_d}
{(\iota _1,\dots ,\iota _d) \in \zz d},
\intertext{respectively, where $e'_1,\dots ,e'_d$ satisfies}
\scal {e_j} {e'_k} &= 2\pi \delta_{jk}
\quad \text{for every}\quad
j,k =1,\dots, d.
\end{align*}
Evidently, $e'_1,\dots ,e'_d$ is a basis of $\rr d$. It is called the \emph{dual basis } of
$e_1,\dots ,e_d$.
We observe that there is a matrix $T_E$ with $e_1,\dots ,e_d$ as the image of
the standard basis, and that the image of the standard basis of $T_{E'}= 2\pi(T^{-1}_E)^t$
is given by $e_1',\dots ,e_d'$.

\par

\begin{defn}\label{Def:MixedLebSpaces}
Let $E$ be a non-degenerate parallelepiped in $\rr d$, $E'$ be the dual parallelepiped
spanned by the ordered set $\mathcal O_0=( e_1',\dots ,e_d')$
in $\rr d$, $\mabfq =(q_1,\dots ,q_d)\in (0,\infty ]^{d}$, $r=\min (1,\mabfq )$ and
$\tau \in \operatorname S_{d}$. If $a\in \ell _0'(\Lambda _E')$
and $f\in L^r_{loc}(\rr d)$, then
$$
\nm a{\ell ^{\mabfq }_{\mathcal O_0,\tau}}\equiv
\nm {b_{d-1}}{\ell ^{q_{d}}(\mathbf Z)}
\quad \text{and}\quad
\nm f{L^{\mabfq }_{\mathcal O_0,\tau}}\equiv
\nm {g_{d-1}}{L^{q_{d}}(\mathbf R)}
$$
where $b_k(l_k)$ and $g_k(z_k)$, $l_k\in \zz {d-k}$ and $z_k\in \rr {d-k}$,
$k=0,\dots ,d-1$, are inductively defined as
\begin{align*}
b_0(j_1,\dots ,j_{d}) &\equiv |a(j_1e_{\tau (1)}+\cdots +j_{d}e_{\tau (d)})|,
\\[1ex]
g_0(x_1,\dots ,x_{d}) &\equiv |f(x_1e_{\tau (1)}+\cdots +x_{d}e_{\tau (d)})|,
\\[1ex]
b_k(l_k) &\equiv
\nm {b_{k-1}(\cdo ,l_k)}{\ell ^{q_k}(\mathbf Z)},
\intertext{and}
g_k(z_k) &\equiv
\nm {g_{k-1}(\cdo ,z_k)}{L^{q_k}(\mathbf R)},
\quad k=1,\dots ,d-1.
\end{align*}
The space $\ell ^{\mabfq }_{\mathcal O_0,\tau}(\Lambda _E')$
consists of all $a\in \ell _0'(\Lambda _E')$ such that
$\nm a{\ell ^{\mabfq}_{\mathcal O_0,\tau}}$ is finite,
and $L^{\mabfq }_{\mathcal O_0,\tau}(\rr d)$ consists
of all $f\in L^r_{loc}(\rr d)$ such that
$\nm f{L^{\mabfq}_{\mathcal O_0,\tau}}$ is finite.
\end{defn}

\par

\begin{defn}\label{Def:ModSpaces}
Let $\omega ,v\in \mascP _E(\rr {2d})$ be such that $\omega$ is $v$-moderate,
$\mascB$ be a $v$-invariant space on
$\rr {2d}$, and let $\phi \in \maclS _{1/2}(\rr d)\setminus 0$.
Then the \emph{modulation space} $M(\omega ,\mascB)$ consists of all $f\in \maclS _{1/2}'
(\rr d)$ such that
\begin{equation}\label{Eq:ModSpNorm}
\nm f{M(\omega ,\mascB)} \equiv \nm {V_\phi f \cdot \omega}{\mascB}
\end{equation}
is finite.
\end{defn}

\par

The theory of modulation spaces has developed in different ways since they
were introduced in \cite{F1} by Feichtinger. (Cf. e.{\,}g. \cite{Fei5,GaSa,Gc2,Toft15}.)
For example, by \cite{GaSa,Toft15} it follows that if $\mascB$ in Definition
\ref{Def:ModSpaces} is a mixed quasi-normed space of Lebesgue type
and $\phi \in M^r_{(v)}(\rr d)\setminus 0$, then $M(\omega ,\mascB )$ is a
quasi-Banach space. Moreover, $f\in M(\omega ,\mascB )$
if and only if $V_\phi f \cdot \omega \in \mascB$, and different choices of
$\phi$ give rise to equivalent quasi-norms in \eqref{Eq:ModSpNorm}.
We also note that for any such $\mascB$, then
$$
\Sigma _1(\rr d) \subseteq M(\omega ,\mascB ) \subseteq \Sigma _1'(\rr d).
$$

\par

Now let $\phi \in \maclS _{1/2}(\rr d)\setminus 0$, $r\in (0,1]$,
$\omega ,v\in \mascP _E(\rr d)$, $\mascB
\subseteq L^r_{loc}(\rr d)$ be a $v$-invariant quasi-Banach space.
We are especially interested in the modulation spaces $M^\infty
(\omega ,\mascB )$ and $W^\infty (\omega ,\mascB )$, which are
defined as the sets of
all $f\in \maclS _{1/2}'(\rr d)$ such that
\begin{align*}
\nm f{M^\infty (\omega ,\mascB )} &\equiv
\Nm {\left ( \underset {x\in \rr d} \esssup |V_\phi f(x,\cdo )|
\cdot \omega \right )}{\mascB}
\intertext{respective}
\nm f{W^\infty (\omega ,\mascB )} &\equiv
\underset {x\in \rr d} \esssup 
\left (\nm {V_\phi f(x,\cdo )\cdot \omega}{\mascB} \right )
\end{align*}
are finite. By straight-forward computations it follows that
$$
M^\infty (\omega ,\mascB ) \hookrightarrow W^\infty (\omega ,\mascB ).
$$

\par

%
% Definitions of spaces (GS and Gevrey classes)
%

\subsection{Classes of periodic elements}

\par

We shall mainly view three aspects on periodicity. First we consider
spaces of periodic Gevrey functions and their duals. Thereafter we focus
(formal) spaces of Fourier series expansions. Finally we consider
periodic Gelfand-Shilov distributions. In Section \ref{sec2} we show that these
different approaches lead to the same type of spaces.

\par

%Let $E$ be a non-degenerate parallelepiped in $\rd$. Then there is a basis
%$e_1,\dots,e_d$ of $\rd$ such that
%$$
%E = \sets{x_1e_1+\cdots+x_de_d}{(x_1,\dots,x_d)\in\rd,\ 0\leq x_j\leq 1,\
%j=1,\dots,d}.
%$$
%The corresponding lattice, dual parallelepiped and dual lattice are given by
%$$
%\Lambda _E =\sets{n_1e_1+\cdots+n_de_d}{(n_1,\dots,n_d)\in \zz d},
%$$
% 
%$$
%E'=\sets {x_1e'_1+\cdots+x_de'_d}{(x_1,\dots,x_d)\in\rd,\ 0\leq x_j\leq 1,\ j=1,\dots,d},
%$$
%and
%$$
%\Lambda'_E=\Lambda_{E'}=\sets{n_1e'_1+\cdots+n_de'_d}{(n_1,\dots,n_d)
%\in \zz d},
%$$
%respectively, where $e'_1,\dots ,e'_d$ is the basis which satisfies 
%$$
%\scal {e_j} {e'_k}=2\pi \delta_{jk}
%\quad \text{for every}\quad
%j,k =1,\dots, d. 
%$$
%Evidently, there is a matrix $T_E$ with $e_1,\dots ,e_d$ as the image of
%the standard basis. Then the image of the standard basis of $T_{E'}= 2\pi(T^{-1}_E)^t$
%is given $e_1',\dots ,e_d'$.

\par

Let $s,t\in\mathbf{R}_{+}$ be such that $s+t\geq 1$, $f\in (\mathcal S_{t}^{s})'(\rd)$ 
and let $E$ be a non-degenerate parallelepiped in $\rr d$.
Then $f$ is called \emph{$E$-periodic} or \emph{$\Lambda _E$-periodic} if $f(x+j)=f(x)$ 
for every $x\in \rd$ and $j\in \Lambda _E$.

\par

The sets
of periodic elements in $(\maclS _t^s)'(\rr d)$ and $(\Sigma _t^s)'(\rr d)$
are denoted by $(\maclS _t^{E,s})'(\rr d)$ and $(\Sigma _t^{E,s})'(\rr d)$,
respectively.

\par

We note that for any $\Lambda _E$-periodic function $f\in C^{\infty}(\rd)$, we have 
\begin{align}
f &= \sum_{\alpha\in \Lambda ' _E} c(f,{\alpha})e^{i\scal \cdo \alpha},
\label{Eq:Expan2}
\intertext{where $c(f,{\alpha})$ are the Fourier coefficients given by}
c(f,{\alpha}) &\equiv \vert E \vert ^{-1}
(f,e^{i\scal \cdo \alpha})_{L^{2}(E)}.\notag
\end{align}

\par

For any $s\ge 0$ and non-degenerate parallelepiped $E\subseteq \rd$ we let $\maclE _{0,s}^{E}(\rd)$ and 
$\maclE _{s}^{E}(\rd)$ be the sets of all $E$-periodic elements
in $\maclE _{0,s}(\rd)$ and in $\maclE _{s}(\rd)$, respectively. Evidently,
$$
\maclE _s^E(\rr d)\simeq \maclE _s(\rr d/\Lambda _E)
\quad \text{and}\quad
\maclE _{0,s}^E(\rr d)\simeq \maclE _{0,s}(\rr d/\Lambda _E),
$$
which is a common approach in the literature.
The duals of $\maclE _{0,s}^{E}(\rd)$ and $\maclE _{s}^{E}(\rd)$ are denoted by
$(\maclE ^{E}_{0,s})'(\rd)$ and $(\maclE ^{E}_{s})'(\rd)$, respectively.

\par

In Section \ref{sec3} we shall characterise spaces of periodic elements given in the
following definition.

\par

\begin{defn}\label{Def:PerQuasiBanachSpaces}
Let $E\in\rd$ be a non-degenerate parallelepiped, $\omega$ be a weight on $\rd$ and let $\mascB$ be a
quasi-Banach space  continuously embedded in $l^{\infty}_{loc}(\Lambda' _E)$. Then
$\maclE ^E(\omega,\mascB )$ consists of all 
$f\in(\maclE_{0}^E)'(\rd)$ such that 
\begin{equation*}
\nm f{\maclE ^E(\omega,\mascB)}\equiv \nm {\{ c(f,\alpha)\omega(\alpha) \}
_{\alpha \in \Lambda '_E}} {\mascB}
\end{equation*}
is finite.
\end{defn}

\par

Next we introduce suitable spaces of formal Fourier series expansions.
For any $r\in \mathbf{R}$ and $s>0$, we let $\maclG _{s,r}^{E}(\rd)$
be the set of all formal expansions
\begin{equation}\tag*{(\ref{Eq:Expan2})$'$}
f=\sum_{\alpha\in \Lambda '_{E}} c({\alpha})e^{i\scal \cdo \alpha}
\end{equation}
such that 
\begin{equation*}
\nm f{\maclG _{s,r}^{E}}
\equiv
\sup_{\alpha \in \Lambda _{E}' }\vert c(\alpha)
e^{r\vert \alpha\vert^{\frac{1}{s}}}\vert
\end{equation*}
is finite. Then $\maclG _{s,r}^{E}(\rd)$ is a Banach space under the norm
$\Vert \cdo \Vert_{\maclG _{s,r}^{E}}$.
We let 
\begin{equation*}
\begin{alignedat}{2}
\maclG _{s}^{E}(\rd) &= \underset {r>0}\indlim \maclG _{s,r}^{E}(\rd),&
\quad
\maclG _{0,s}^{E}(\rd) &= \underset {r>0} \projlim \maclG _{s,r}^{E}(\rd),
\\[1ex]
(\maclG _{s}^{E})'(\rd) &= \underset {r<0}\projlim \maclG _{s,r}^{E}(\rd),&
\quad
(\maclG _{0,s}^{E})'(\rd) &= \underset {r<0}\indlim\maclG _{s,r}^{E}(\rd).
\end{alignedat}
\end{equation*}
We also let $\maclG _{0,0}^E(\rr d)$ be the set of all constant functions on $\rr d$,
$\maclG _{0}^E(\rr d)$ be the set of all expansions in \eqref{Eq:Expan2}$'$
such that all but finite numbers of $c(\alpha )$ are zero, and we let
$(\maclG _{0}^E)'(\rr d)$ be the set of all formal expansions of the form
\eqref{Eq:Expan2}$'$ (cf. \cite{Zi}).

\par

The topology of $(\maclG _{0}^E)'(\rr d)$ is defined through the semi-norms
$$
\nm f{[N]}\equiv
\sup_{\alpha \in \Lambda _{E}',\, |\alpha |\le N}\vert c(\alpha )\vert ,
\qquad f=\sum_{\alpha\in \Lambda '_{E}} c({\alpha})e^{i\scal \cdo \alpha},
$$
in which $(\maclG _{0}^E)'(\rr d)$ becomes a Fr{\'e}chet space. The set
$\maclG _{0}^E(\rr d)$ is the union of finite-dimensional spaces of trigonometric
polynomials with canonical topologies, and $\maclG _{0}^E(\rr d)$ is equipped
with the inductive limit topology of these vector spaces.

\par

Evidently, if $f\in \maclG _{s}^{E}(\rd)$ or $f\in \maclG _{0,s}^{E}(\rd)$
for some $s\ge 0$ is given by \eqref{Eq:Expan2}, then $\sum
_{\alpha \in \Lambda _{E}'}|c(\alpha )|$
is convergent, and we may identify $f$ by a continuous
$E$-periodic function.

\par

If $s\ge 0$, $f\in(\maclG _{s}^{E})'(\rd)$ and $\phi\in\maclG _{s,r}^{E}(\rd)$ or
$f\in(\maclG _{0,s}^{E})'(\rd)$ and $\phi\in\maclG _{0,s}^{E}(\rd)$,
then we set
\begin{equation*}
(f,\phi)_{E}=\sum_{\alpha \in \Lambda '_{E}} c(f,{\alpha})
\overline{c(\phi ,{\alpha})}
\end{equation*}
and
\begin{equation*}
\scal f\phi_{E} =\sum _{\alpha \in \Lambda _{E}'} c(f,{\alpha}) c(\phi ,{\alpha}),
\end{equation*}
and it follows that the duals of $\maclG _{s}^{E}(\rd)$ and $\maclG _{0,s}^{E}(\rd)$ can be 
identified by $(\maclG _{s}^{E})'(\rd)$ and $(\maclG _{0,s}^{E})'(\rd))$ respectively.
We also note that by the identification of $\maclG _{s}^{E}(\rd)$ as subspace of
$E$-periodic
continuous functions, the form $(\cdo,\cdo)_{E}$ on $\maclG _{s,r}^{E}(\rd)$ 
extends uniquely to a scalar product on $L^{2}(E)$ and that 
\begin{equation*}
\Vert f \Vert_{E}= \vert E\vert ^{-\frac{1}{2}}\Vert f\Vert
_{L^{2}(E)},
\end{equation*}
where $\vert E\vert$ is the volume of $E$.

\par

In Section \ref{sec2} we show that
\begin{alignat}{1}
\mathcal E_s^E(\rd) &=\maclG _s^E(\rd),
\qquad
\mathcal E_{0,s}^E(\rd) = \maclG _{0,s}^E(\rd) ,
\label{Eq:PerGevIdent1}
\\[1ex]
(\mathcal E_s^E)'(\rd) &=(\maclG _s^E)'(\rd) = (\maclS _t^{E,s})'(\rr d),
\label{Eq:PerGevDistIdent1}
\intertext{and}
(\mathcal E_{0,s}^E)'(\rd) &= (\maclG _{0,s}^E)'(\rd) = (\Sigma _t^{E,s})'(\rr d).
\label{Eq:PerGevDistIdent2}
\end{alignat}

\par

\begin{rem}\label{Rem:PerDistr}
We note that if $f\in \mascS '(\rr d)$ is $E$-periodic given by \eqref{Eq:Expan2}
and $\phi \in \mascS (\rr d)$, then
\begin{equation}\label{Eq:PerDistAction}
\scal f\phi = (2\pi )^{\frac d2}\sum _{\alpha \in E}c(f,\alpha )
\widehat \phi (-\alpha ).
\end{equation}
If instead $f\in (\maclG _s^{E})'(\rr d)$, then the map which takes
$\phi \in \maclS _t^s(\rr d)$ into the right-hand side of
\eqref{Eq:PerDistAction}, defines an element in $(\maclS _t^s)'(\rr d)$
since
$$
|c(f,\alpha )|\lesssim e^{r_2|\alpha |^{\frac 1s}}
\quad \text{and}\quad
|\widehat \phi (\xi )|\lesssim e^{-r_1|\xi |^{\frac 1s}}
$$
for every $r_2>0$ and some $r_1>0$.
%$|c(f,\alpha )|\lesssim e^{r_2|\alpha |^{\frac 1s}}$ for every
%$r_2>0$ and $|\widehat \phi (\xi )|\lesssim e^{-r_1|\xi |^{\frac 1s}}$
%for some $r_1>0$.
Similar arguments
hold with $\maclG _{0,s}^{E}$ and $\Sigma _t^s$ in place of
$\maclG _{s}^{E}$ and $\maclS _t^s$ (at each place).

\par

This shows that any $f$ in
$\maclG _{s}^{E}$ (in $\maclG _{0,s}^{E}$) can be identified as an element
in $(\maclS _t^s)'(\rr d)$ (in $(\Sigma _t^s)'(\rr d)$) and that the mappings which take
$\maclG _{s}^{E}(\rr d)$ and $\maclG _{0,s}^{E}(\rr d)$ into
$(\maclS _t^s)'(\rr d)$ and $(\Sigma _t^s)'(\rr d)$, respectively, are
continuous. 
\end{rem}

\par

%%%%%%%%%%%%%%%%%%%%%%%
\section{Characterizations of periodic functions and
distributions}\label{sec2}
%%%%%%%%%%%%%%%%%%%%%%%

\par

In this section we show that \eqref{Eq:PerGevIdent1}--\eqref{Eq:PerGevDistIdent2}
hold.
At the same time we deduce characterizations of such
spaces in terms of suitable estimates on the short-time Fourier transforms
of the involved functions and
distributions. We also deduce a convenient
formula for computing the Fourier coefficients.

\par

%The following proposition shows that \eqref{Eq:PerGevIdent1} holds.
%More precisely, we have the following results.
In the first result we show that \eqref{Eq:PerGevIdent1}--\eqref{Eq:PerGevDistIdent2} hold.

\par

\begin{thm}\label{Thm:Equality}
%Let $E\subseteq \rr d$ be a non-degenerate parallelepiped,
%and let $s,t>0$ be such that $s+t\geq 1$.
%Then \eqref{Eq:PerGevIdent1} and \eqref{Eq:PerGevDistIdent1} hold true.
%If in addition $(s,t)\neq (\frac{1}{2},\frac 12)$, then \eqref{Eq:PerGevDistIdent2}
%holds true.
Let $E\subseteq \rr d$ be a non-degenerate parallelepiped,
and let $s,t>0$ be such that $s+t\geq 1$.
Then the following is true:
\begin{enumerate}
\item if $f\in (\maclG _s^E)'(\rd)$ ($f\in (\maclG _{0,s}^E)'(\rd)$) is given by
\eqref{Eq:Expan2} and $\phi \in \maclS _t^s(\rr d)$ ($\phi \in \Sigma _t^s(\rr d)$),
then \eqref{Eq:PerDistAction} holds;

\vrum

\item the equalities in \eqref{Eq:PerGevIdent1} and \eqref{Eq:PerGevDistIdent1} hold true.
If in addition $(s,t)\neq (\frac{1}{2},\frac 12)$, then \eqref{Eq:PerGevDistIdent2}
holds true.
\end{enumerate}
\end{thm}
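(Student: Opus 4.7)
The plan is to prove Part~(1) by verifying absolute convergence of the series in \eqref{Eq:PerDistAction}, and then to establish Part~(2) in two stages: first identify the primal spaces ($\maclE_s^E = \maclG_s^E$) together with their duals, and second identify those duals with the periodic Gelfand--Shilov distributions. For Part~(1), I would observe that $f \in (\maclG_s^E)'(\rd)$ gives $|c(f,\alpha)| \lesssim e^{r|\alpha|^{1/s}}$ for every $r > 0$, while $\phi \in \maclS_t^s(\rd)$ gives, via \eqref{Eq:GSFtransfChar}, some $r_0 > 0$ with $|\widehat\phi(-\alpha)| \lesssim e^{-r_0|\alpha|^{1/s}}$; choosing $r < r_0$ makes the sum in \eqref{Eq:PerDistAction} absolutely convergent. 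By Remark~\ref{Rem:PerDistr} the resulting action is then an element of $(\maclS_t^s)'(\rd)$ which is $E$-periodic by construction. The Beurling case is analogous with the roles of the two quantifiers on $r$ and $r_0$ swapped.

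\textbf{Primal and dual Gevrey identifications.} For \eqref{Eq:PerGevIdent1}, take $f \in \maclE_s^E(\rd)$. Integration by parts over $E$, in which $\Lambda_E$-periodicity kills the boundary terms, yields $c(\partial^\beta f, \alpha) = (i\alpha)^\beta c(f, \alpha)$. For each $\alpha \ne 0$ I would pick $j$ with $|\alpha_j| \ge |\alpha|/\sqrt{d}$, apply this with $\beta = N e_j$, bound by $\|\partial^\beta f\|_{L^\infty(E)} \lesssim h^N N!^s$, and optimize in $N$ via Stirling to produce $r > 0$ with $|c(f,\alpha)| \lesssim e^{-r|\alpha|^{1/s}}$. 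Conversely, if $|c(\alpha)| \lesssim e^{-r|\alpha|^{1/s}}$, term-by-term differentiation together with an elementary Stirling-type bound on $\sup_\alpha |\alpha|^N e^{-(r/2)|\alpha|^{1/s}}$ yields $h > 0$ with $\|f\|_{K,h,s} < \infty$ on every compact $K$. The Beurling analogue uses the same computations with ``for some'' and ``for every'' exchanged. The dual identifications in \eqref{Eq:PerGevDistIdent1} and \eqref{Eq:PerGevDistIdent2} then follow by Hahn--Banach from the perfect pairing $\sum c(f,\alpha)\overline{c(\phi,\alpha)}$ between the sequence spaces of opposite exponential type.

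\textbf{Identification with periodic Gelfand--Shilov distributions.} Part~(1) already supplies the inclusion $(\maclG_s^E)'(\rd) \hookrightarrow (\maclS_t^{E,s})'(\rd)$. For the converse I would fix $\phi \in \maclS_t^s(\rd) \setminus 0$, and for $f \in (\maclS_t^{E,s})'(\rd)$ define
\[ c(f, \alpha) = \|\phi\|_{L^2}^{-2} |E|^{-1} \int_E \left( \int_{\rd} V_\phi f(x, \xi)\, \widehat\phi(\alpha - \xi)\, e^{-i\scal{x}{\alpha - \xi}}\, d\xi \right) dx. \]
The covariance identity $V_\phi f(x + y, \xi) = e^{-i\scal y \xi} V_\phi f(x, \xi)$ for $y \in \Lambda_E$, immediate from periodicity of $f$, shows that $|V_\phi f(\cdo, \xi)|$ is $\Lambda_E$-periodic. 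Combining this with Proposition~\ref{stftGelfand2dist} (which gives $|V_\phi f(x,\xi)| \lesssim e^{r(|x|^{1/t}+|\xi|^{1/s})}$ for every $r > 0$), the decay $|\widehat\phi(\alpha-\xi)| \lesssim e^{-r_0|\alpha-\xi|^{1/s}}$ for some $r_0 > 0$, and the subadditivity $|\xi|^{1/s} \le c(|\alpha-\xi|^{1/s}+|\alpha|^{1/s})$ then yields $|c(f, \alpha)| \lesssim e^{r|\alpha|^{1/s}}$ for every $r > 0$, as required. To see that the formal series $\sum c(f,\alpha) e^{i\scal\cdo\alpha}$ represents $f$ one invokes the STFT reconstruction formula \eqref{Eq:IntrSTFTDescrForm}. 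The Beurling case proceeds identically, with $(s,t)\ne(\tfrac12,\tfrac12)$ needed only to ensure $\Sigma_t^s(\rd) \ne \{0\}$. The principal technical obstacle is precisely this construction of Fourier coefficients for a general $f \in (\maclS_t^{E,s})'(\rd)$: when $s < 1$ no smooth compactly supported partition of unity lies in $\maclS_t^s$, so the classical formula \eqref{Eq:FourCoeffHorm} is unavailable, and its role is taken over by the STFT-based formula above together with the quasi-periodicity of $V_\phi f$ in $x$.
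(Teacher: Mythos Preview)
Your overall strategy matches the paper's: Part~(1) via absolute convergence of \eqref{Eq:PerDistAction} (this is essentially Remark~\ref{Rem:PerDistr}); the identities $\maclE_s^E=\maclG_s^E$ and their duals via integration by parts and Stirling (the paper does this as Proposition~\ref{Prop:ExpIdent1}); and the remaining inclusion $(\maclS_t^{E,s})'\subseteq(\maclE_s^E)'$ via an STFT-based Fourier coefficient formula (the paper's Definition~\ref{Def:FourSer} and Proposition~\ref{Prop:Equality}). The estimate $|c(f,\alpha)|\lesssim e^{r|\alpha|^{1/s}}$ that you obtain from periodicity of $|V_\phi f(\cdo,\xi)|$ together with Proposition~\ref{stftGelfand2dist} is exactly the paper's Proposition~\ref{Prop:Equality}(1).

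There is, however, a genuine gap at the very end. You write that to see $\Fo E(f)=f$ in $(\maclS_t^s)'(\rd)$ ``one invokes the STFT reconstruction formula \eqref{Eq:IntrSTFTDescrForm}''. But \eqref{Eq:IntrSTFTDescrForm} (which is Theorem~\ref{Prop:STFTDescrForm}) computes the \emph{periodic} form $(f,\psi)_E$ for $\psi\in\maclE_s^E(\rd)$; it says nothing about $\scal f\psi$ for a general test function $\psi\in\maclS_t^s(\rd)$, which is what you must match. The standard Moyal identity over $\rr{2d}$ does not help directly either, because the integral $\int_E$ defining $c(f,\alpha)$ is only over one period. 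The paper closes this gap (Proposition~\ref{Prop:Equality}(2)) by expanding $\scal{\Fo E(f)}\psi$, applying Poisson's summation formula to convert the $\Lambda_E'$-sum into a $\Lambda_E$-sum, recognizing the result as $\sum_{k\in\Lambda_E}\scal f{|\phi(\cdo-x+k)|^2\psi}$, and then showing that $\sum_k\phi_0(\cdo+k)\psi$ converges in $\maclS_t^s(\rd)$ to $\nm\phi{L^2}^2\psi$ (Lemmas~\ref{Lem:GSTensors} and~\ref{Lem:EstGSSums}). This last convergence in the Gelfand--Shilov topology is not automatic and is precisely where the work lies; your proposal does not account for it.
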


\par

In Theorem \ref{Thm:Equality} it is understood that in (2) we interpret
the elements in $(\maclG ^{E}_s)'(\rd)$ ($(\maclG ^{E}_{0,s})'
(\rd)$) as elements in $(\maclS _t^s)'(\rr d)$ ($(\Sigma^{s}_t)'(\rd)$),
which is possible in view of Remark \ref{Rem:PerDistr}.

\par

The next result shows that the form $(\cdo ,\cdo )_{E}$
can be obtained in terms of suitable integrations of short-time Fourier transforms.

\par

\begin{thm}\label{Prop:STFTDescrForm}
Let $E\subseteq \rr d$ be a non-degenerate parallelepiped,
$s,t>0$ be such that $s+t\ge 1$,
$f\in (\maclE _s^{E})'(\rr d)$, $\psi \in \maclE _s^{E}(\rr d)$
and $\phi \in \maclS _t^s(\rr d)\setminus 0$. Then
\begin{align}
(x,\xi ) &\mapsto (V_\phi f)(x,\xi )\overline{(V_\phi \psi)(x,\xi )}\in L^1(E\times \rr d)
\label{Eq:STFTL1Prop}
\intertext{and}
(f,\psi )_{E} &= (\nm \phi {L^2}^2|E|)^{-1}
\int _{E}
\left (
\int _{\rr d} (V_\phi f)(x,\xi )\overline{(V_\phi \psi)(x,\xi )}\, d\xi
\right )
\, dx
\label{Eq:STFTDescrForm}
\end{align}
The same holds true with $\maclE _s^{E}$ and $\Sigma _t^s$
in place of $\maclE _{0,s}^{E}$ and $\maclS _t^s$.
\end{thm}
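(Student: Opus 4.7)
The plan is to expand $f$ and $\psi$ using their Fourier series representations from Theorem \ref{Thm:Equality}, derive explicit series representations for their short-time Fourier transforms, and then establish both \eqref{Eq:STFTL1Prop} and \eqref{Eq:STFTDescrForm} simultaneously by a Fubini argument, orthogonality of the exponentials over $E$, and Plancherel on the $\xi$-integration.

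A direct computation inserting \eqref{Eq:Expan2} into the definition of $V_\phi$ and evaluating each resulting integral gives
$$
V_\phi f(x,\xi ) = \sum _{\alpha \in \Lambda _E'} c(f,\alpha )\, e^{-i\scal x{\xi -\alpha }}\, \overline{\widehat \phi (\alpha -\xi )},
$$
and analogously for $V_\phi \psi$. In particular $|V_\phi f|$ and $|V_\phi \psi |$ are $\Lambda _E$-periodic in $x$, so integration in $x$ over $E$ captures the ``full'' behaviour.

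For the $L^1$-property I would combine two one-sided exponential estimates obtained from these series. Since $\psi \in \maclE _s^E(\rr d)$, Theorem \ref{Thm:Equality} gives $|c(\psi ,\beta )|\lesssim e^{-r_0|\beta |^{\frac 1s}}$ for some $r_0>0$, and $\phi \in \maclS _t^s(\rr d)$ gives $|\widehat \phi (\eta )|\lesssim e^{-r_1|\eta |^{\frac 1s}}$ for some $r_1>0$. Using the elementary inequality $|\beta |^{\frac 1s}+|\beta -\xi |^{\frac 1s}\gtrsim |\xi |^{\frac 1s}$ and splitting the exponent in half, the series for $V_\phi \psi$ yields $|V_\phi \psi (x,\xi )|\lesssim e^{-r_2|\xi |^{\frac 1s}}$ for some $r_2>0$, uniformly in $x$. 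Conversely $f\in (\maclE _s^E)'(\rr d)$ gives $|c(f,\alpha )|\lesssim e^{r|\alpha |^{\frac 1s}}$ for \emph{every} $r>0$; combining this with $|\alpha |^{\frac 1s}\lesssim |\alpha -\xi |^{\frac 1s}+|\xi |^{\frac 1s}$ produces $|V_\phi f(x,\xi )|\lesssim e^{r|\xi |^{\frac 1s}}$ for every $r>0$, uniformly for $x\in E$. Choosing $r<r_2$ delivers \eqref{Eq:STFTL1Prop}. For \eqref{Eq:STFTDescrForm} I would apply Fubini, multiply the two series, and integrate in $x$ first. The orthogonality
$$
\int _E e^{i\scal x{\alpha -\beta }}\, dx = |E|\, \delta _{\alpha ,\beta },
\qquad \alpha ,\beta \in \Lambda _E',
$$
collapses the double sum to the diagonal, and the remaining $\xi $-integral combines translation invariance with Plancherel's formula, $\int _{\rr d}|\widehat \phi (\alpha -\xi )|^2\, d\xi =\nm \phi {L^2}^2$, to produce $|E|\, \nm \phi {L^2}^2\, (f,\psi )_E$, which is \eqref{Eq:STFTDescrForm} after rearrangement.

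The main technical obstacle is justifying the interchange of the double sum with the two integrations, since a priori the series for $V_\phi f$ need not converge absolutely at each point. The exponential estimates above are tailored precisely to supply the pointwise majorant required for Fubini--Tonelli, together with the uniform bound $\sum _{\alpha \in \Lambda _E'}e^{-\rho |\alpha -\xi |^{\frac 1s}}\lesssim 1$ in $\xi $, which holds for any $\rho >0$. The Beurling-type variant (replacing $\maclE _s^E$ and $\maclS _t^s$ by $\maclE _{0,s}^E$ and $\Sigma _t^s$) follows from the same argument, with the ``for some $r$'' and ``for every $r$'' quantifiers swapped throughout the exponential estimates.
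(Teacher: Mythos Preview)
Your proposal is correct and follows essentially the same route as the paper: expand $V_\phi f$ and $V_\phi \psi$ via the Fourier series of $f$ and $\psi$, use exponential decay/growth estimates on the coefficients and on $\widehat\phi$ to get both \eqref{Eq:STFTL1Prop} and the absolute summability needed for Fubini, then integrate in $x$ over $E$ (orthogonality collapses the double sum to the diagonal) and in $\xi$ (Plancherel yields $\nm\phi{L^2}^2$). The only cosmetic difference is that the paper isolates your inline exponential estimates as the separate Propositions \ref{Prop:PerGevrSTFT} and \ref{Prop:PerGSDistSTFT} (invoked via Remark \ref{Rem:STFTDescrForm}), whereas you derive them directly within the proof.
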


\par

\begin{rem}\label{Rem:FourCoeff}
Let $f$ and $\phi$ be the same as in Theorem \ref{Prop:STFTDescrForm}.
Then it follows from \eqref{Eq:STFTDescrForm} that 
\begin{multline*}
c(f,\alpha)=(f,e^{i\scal\cdo \alpha})_{E}
\\[1ex]
= (\nm \phi{L^2({\rd})}^2 |E|)^{-1} \int _{E} \left(\int _{\rd} (V_\phi f)(x,\xi)
\widehat{\phi}(\alpha - \xi) e^{-i\scal x {\alpha - \xi}} \,d\xi \right ) \, dx.
\end{multline*}
\end{rem}
%%%%%%%%%%%%%%%%%%%%%%%%%%%

\par

We also have the following characterizations of periodic ultra-distributions
in terms of short-time Fourier transforms, analogous to Propositions
\ref{stftGelfand2} and \ref{stftGelfand2dist}.

\par

\begin{thm}\label{Prop:PerGSDistSTFT2}
Let $E\subseteq \rr d$ be a non-degenerate parallelepiped,
$s> 0$, $f$ be an $E$-periodic
Gevrey distribution on $\rr d$, and let $\phi \in \maclS _t^s(\rr d)\setminus 0$
($\phi \in \Sigma _t^s(\rr d)\setminus 0$) for some $t\ge 0$. Then the following
is true:
\begin{enumerate}
\item $f\in (\maclE _s^{E})'(\rr d)$ ($f\in (\maclE _{0,s}^{E})'(\rr d)$) if and only if
 $|V_\phi f(x,\xi )|\lesssim e^{r|\xi |^{\frac 1s}}$ for every $r>0$
(for some $r>0$).

\vrum

\item $f\in \maclE _s^{E}(\rr d)$ ($f\in \maclE _{0,s}^{E}(\rr d)$) if and only if 
$|V_\phi f(x,\xi )|\lesssim e^{-r|\xi |^{\frac 1s}}$ for some $r>0$
(for every $r>0$).
\end{enumerate}
\end{thm}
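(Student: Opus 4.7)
The plan is to use Theorem~\ref{Thm:Equality} to transfer everything to Fourier coefficient language and then convert between coefficient bounds and short-time Fourier transform bounds by means of the representation formulae already at hand. Concretely, the identifications \eqref{Eq:PerGevIdent1} and \eqref{Eq:PerGevDistIdent1} (together with their Beurling counterparts) reduce the problem to showing that the hypothesised STFT estimates are equivalent to
\begin{equation*}
|c(f,\alpha)| \lesssim e^{-r|\alpha|^{\frac 1s}}
\quad \text{respectively}\quad
|c(f,\alpha)| \lesssim e^{r|\alpha|^{\frac 1s}},
\end{equation*}
for some (resp.\ every) $r>0$ in the Roumieu setting and every (resp.\ some) $r>0$ in the Beurling setting.

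For the implication from coefficient bounds to STFT bounds, I would plug the Fourier series $f=\sum c(f,\alpha)e^{i\scal \cdo \alpha}$ into $V_\phi$. A direct computation yields
\begin{equation*}
V_\phi (e^{i\scal\cdo\alpha})(x,\xi) = e^{i\scal x{\alpha-\xi}}\,\overline{\widehat \phi(\alpha-\xi)},
\end{equation*}
so that
\begin{equation*}
|V_\phi f(x,\xi)| \le \sum_{\alpha \in \Lambda_E'} |c(f,\alpha)|\,|\widehat\phi(\alpha-\xi)|.
\end{equation*}
Since $\phi\in \maclS_t^s(\rr d)$ (resp.\ $\Sigma_t^s(\rr d)$), the characterization \eqref{Eq:GSFtransfChar} applied to $\widehat\phi\in \maclS_s^t(\rr d)$ (resp.\ $\Sigma_s^t(\rr d)$) gives $|\widehat\phi(\eta)|\lesssim e^{-r_0|\eta|^{1/s}}$ for some (resp.\ every) $r_0>0$. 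Combining this decay with the sub-additivity bound $|\xi|^{1/s}\le C_s(|\alpha|^{1/s}+|\alpha-\xi|^{1/s})$ and splitting the exponent so that a small part of $r_0|\alpha-\xi|^{1/s}$ is transferred to the $|\xi|$-side, one extracts the target factor $e^{\pm r'|\xi|^{1/s}}$ while the lattice sum $\sum_\alpha e^{-\kappa|\alpha-\xi|^{1/s}}$ remains uniformly bounded in $\xi$ for the leftover $\kappa>0$ (by comparison with a convergent integral).

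For the converse, I would use the integral representation in Remark~\ref{Rem:FourCoeff},
\begin{equation*}
c(f,\alpha) = (\nm \phi {L^2}^2 |E|)^{-1}\int_{E}\int_{\rr d} V_\phi f(x,\xi)\widehat\phi(\alpha-\xi) e^{-i\scal x{\alpha-\xi}}\,d\xi\,dx,
\end{equation*}
absorb the compact $x$-integration, and apply the same Peetre-type splitting to the product of the hypothesised exponential bound on $V_\phi f$ and the exponential decay of $\widehat\phi$. In each of the four cases, balancing the two exponentials exploits the quantifier asymmetry (``for every'' on the faster decaying factor, ``for some'' on the slower) so that the residual integral in $|\alpha-\xi|^{1/s}$ converges and the extracted $|\alpha|^{1/s}$-exponent matches the required rate.

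The main obstacle, more technical than conceptual, is the careful bookkeeping across the four parameter regimes: the small parameter in the splitting must be chosen differently in the Roumieu vs.\ Beurling setting and in the characterization of $\maclE_s^{E}$ vs.\ its dual, so that the exponent in $|\xi|^{1/s}$ (or $|\alpha|^{1/s}$) comes out at the target rate while the tails stay uniformly summable/integrable. Once the coefficient bounds are secured in each case, Theorem~\ref{Thm:Equality} identifies $f$ with the appropriate space and the proof concludes.
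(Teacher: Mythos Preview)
Your proposal is correct and matches the paper's argument: the paper likewise reduces to Fourier-coefficient estimates via Theorem~\ref{Thm:Equality}, obtains the STFT bound from the coefficients by inserting the Fourier expansion (this is Proposition~\ref{Prop:PerGevrSTFT}), and recovers the coefficients from the STFT via the formula in Remark~\ref{Rem:FourCoeff} with the same exponent-splitting you describe. The one cosmetic difference is that for the implication $f\in(\maclE_s^E)'\Rightarrow |V_\phi f(x,\xi)|\lesssim e^{r|\xi|^{1/s}}$ the paper routes through Proposition~\ref{Prop:PerGSDistSTFT}, exploiting the $E$-periodicity of $x\mapsto|V_\phi f(x,\xi)|$ together with Proposition~\ref{stftGelfand2dist}, rather than your direct Fourier-series bound; both arguments are valid.
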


\medspace

%When proving Theorem \ref{Thm:Equality} we 

The identities \eqref{Eq:PerGevIdent1} and the first two equalities in
\eqref{Eq:PerGevDistIdent1} and \eqref{Eq:PerGevDistIdent2} in Theorem
\ref{Thm:Equality} also hold for $s=0$, which is a consequence
of the following result.
%are immediate consequences of the following result.

\par

\begin{prop}\label{Prop:ExpIdent1}
Let $s\geq 0$ and let $E\subseteq \rr d$ be a non-degenerate parallelepiped.
Then \eqref{Eq:PerGevIdent1} and the first equalities
in \eqref{Eq:PerGevDistIdent1} and \eqref{Eq:PerGevDistIdent2} hold.
%$\maclG _{s}^{E}(\rd)=\maclE _{s}^{E}(\rd)$ and
%$\maclG _{0,s}^{E}(\rd)=\maclE  _{0,s}^{E}(\rd)$.
In particular the map 
\begin{equation}\label{Eq:1}
f \mapsto \sum_{\alpha\in \Lambda _{E}'} c(f,\alpha) e^{i\scal \cdo \alpha}
\end{equation}
is a homeomorphism from $\maclE _{s}^{E}(\rd)$ to $\maclG _{s}^{E}(\rd)$ and from 
$\maclE _{0,s}^{E}(\rd)$ to $\maclG _{0,s}^{E}(\rd)$, and extend uniquely to 
homeomorphisms from $(\maclE _{s}^{E})'(\rd)$ to $(\maclG _{s}^{E})'(\rd)$ and from 
$(\maclE _{0,s}^{E})'(\rd)$ to $(\maclG _{0,s}^{E})'(\rd)$.
\end{prop}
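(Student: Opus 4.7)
The plan is to prove the equalities in \eqref{Eq:PerGevIdent1} by showing that the Fourier coefficient map $f\mapsto\{c(f,\alpha)\}_{\alpha\in\Lambda'_E}$ is a topological isomorphism from $\maclE_s^E(\rr d)$ onto $\maclG_s^E(\rr d)$ and from $\maclE_{0,s}^E(\rr d)$ onto $\maclG_{0,s}^E(\rr d)$. The duality statements in \eqref{Eq:PerGevDistIdent1} and \eqref{Eq:PerGevDistIdent2} (first equalities) then follow by transposition once density of trigonometric polynomials is available, which is itself an easy consequence of the isomorphism together with the density of finitely supported sequences in $\maclG_s^E$ and $\maclG_{0,s}^E$.

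For the forward inclusion I would combine the identity $(i\alpha)^\beta c(f,\alpha)=c(\partial^\beta f,\alpha)$ with the trivial bound $|c(g,\alpha)|\le|E|^{-1}\nm g{L^1(E)}$ and the Gevrey estimate $|\partial^\beta f(x)|\le Ch^{|\beta|}\beta!^s$ to obtain $|\alpha^\beta c(f,\alpha)|\le Ch^{|\beta|}\beta!^s$ for every multi-index $\beta$. Taking $\beta=Ne_j$ for a coordinate $j$ with $|\alpha_j|\ge|\alpha|/\sqrt d$ yields $|c(f,\alpha)|\le C(\sqrt d\,h)^NN!^s/|\alpha|^N$; optimizing $N\sim r'|\alpha|^{1/s}$ via Stirling's formula gives $|c(f,\alpha)|\lesssim e^{-r|\alpha|^{1/s}}$ for some $r>0$ in the Roumieu case, and for every $r>0$ in the Beurling case (where $h$ may be chosen arbitrarily small). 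For the degenerate value $s=0$ the inequality forces $c(f,\alpha)=0$ whenever $\max_j|\alpha_j|>h\sqrt d$, so $f$ is a trigonometric polynomial (and only constants remain in the Beurling subcase).

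For the reverse inclusion, given $f=\sum_\alpha c(\alpha)e^{i\scal\cdo\alpha}$ with $|c(\alpha)|\le Ce^{-r|\alpha|^{1/s}}$, the differentiated series converges uniformly, and after splitting $r=r_1+r_2$ with $r_2>0$,
\begin{equation*}
|\partial^\beta f(x)|\le\sum_\alpha|\alpha|^{|\beta|}|c(\alpha)|\le C\Bigl(\sup_\alpha|\alpha|^{|\beta|}e^{-r_1|\alpha|^{1/s}}\Bigr)\sum_\alpha e^{-r_2|\alpha|^{1/s}}.
\end{equation*}
The second factor is a finite constant; the supremum is attained near $|\alpha|\sim(s|\beta|/r_1)^s$ with value $\lesssim(s|\beta|/(er_1))^{s|\beta|}$, which Stirling converts to $\lesssim(C'/r_1)^{s|\beta|}|\beta|!^s\lesssim(C''/r_1)^{s|\beta|}\beta!^s$ via $|\beta|!\le d^{|\beta|}\beta!$. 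Hence $f\in\maclE_s^E(\rr d)$ with a Gevrey parameter $h$ depending only on $r_1,s,d$, so the inverse map is continuous for the inductive/projective limit topologies; the $s=0$ case reduces to a finite sum and is trivial.

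For the dual equalities, set $c(F,\alpha):=|E|^{-1}\scal F{e^{-i\scal\cdo\alpha}}$ for $F\in(\maclE_s^E)'(\rr d)$. A direct computation shows that the Gevrey seminorm $\nm{e^{i\scal\cdo\alpha}}{K,h,s}$ from \eqref{e2} grows at most like $e^{C|\alpha|^{1/s}/h^{1/s}}$ when $K\supseteq E$; continuity of $F$ on the Roumieu inductive limit then yields $|c(F,\alpha)|\le C_he^{C|\alpha|^{1/s}/h^{1/s}}$ for every $h>0$, which is sub-exponential growth, so $\{c(F,\alpha)\}\in(\maclG_s^E)'(\rr d)$. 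Conversely, any such sequence defines a continuous functional through $\scal F\phi_E=\sum_\alpha c(F,\alpha)\overline{c(\phi,\alpha)}$, the convergence being guaranteed by the growth/decay pairing, and density of trigonometric polynomials furnishes injectivity. The same scheme with inductive and projective limits interchanged handles the Beurling identity. The principal difficulty is the Stirling optimization underlying both inclusions, and the book-keeping needed to ensure that the correspondence between the Gevrey parameter $h$ and the Fourier-side parameter $r$ preserves both the Roumieu and Beurling topologies simultaneously, thereby giving bicontinuity in all cases including the endpoint $s=0$.
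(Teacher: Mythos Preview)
Your proposal is correct and follows essentially the same approach as the paper: both directions of \eqref{Eq:PerGevIdent1} are obtained by the identity $(i\alpha)^\beta c(f,\alpha)=c(\partial^\beta f,\alpha)$ together with a Stirling-type optimization (the paper optimizes coordinate-wise via the auxiliary functions $g(t)=t^k e^{-rt^{1/s}}$ and $g_k(t)=h^tt^{st}/k^t$, while you pick a dominant coordinate and optimize over $N$, which is an equivalent bookkeeping), and the endpoint $s=0$ is handled identically. For the dual identities the paper simply invokes duality of the topological isomorphism already established, so your direct estimate of $\nm{e^{i\scal\cdo\alpha}}{K,h,s}$ and the density argument are correct but more laborious than necessary.
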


\par

Proofs of \eqref{Eq:PerGevIdent1} in Theorem \ref{Thm:Equality} in the case $s>0$
can be found in e.{\,}g. \cite{DaRu1,Go,Pil2}. In order to be self-contained we here
present a proof including this part as well.

\par

\begin{proof}
We only prove the first equality in \eqref{Eq:PerGevIdent1}. The second one follows
by similar arguments and is left for the reader. The first equalities in
\eqref{Eq:PerGevDistIdent1} and \eqref{Eq:PerGevDistIdent2} are then immediate
consequences of \eqref{Eq:PerGevIdent1} and duality.
%Let $N>0$ be an integer such that $2N\geq s$.

\par

First we consider the case when $s>0$. Assume that $f\in
\maclE _{s}^{E}(\rd )\setminus 0$ and $c(f,\alpha)=0$ for every $\alpha$.
Then Bessel's equality gives
\begin{equation*}
\sum _{\alpha \in \Lambda _{E}'} \vert c(f,\alpha )\vert^{2}
=
\vert E\vert^{-1}\int_{E}\vert f(x)\vert^{2} dx > 0,\quad
f\in L^{2}(E),
\end{equation*}
and it follows that $c(f,\alpha ) \neq 0$ for at least one $\alpha \in \Lambda _{E}'$.
Hence the right-hand side of \eqref{Eq:1} is non-zero as an element in
$\maclG _s^{E}(\rr d)$, and the injectivity follows.

\par 

Next we show that 
\begin{equation}\label{Eq:1A}
\maclG _{s}^{E}(\rd)\subseteq\maclE _{s}^{E}(\rd)\quad \text{and} \quad
\maclG _{0,s}^{E}(\rd)\subseteq\maclE  _{0,s}^{E}(\rd).
\end{equation}
Suppose that $f$ is given by \eqref{Eq:Expan2}, where  
$\vert c(f, {\alpha})\vert \lesssim e^{-r\vert \alpha\vert^{\frac{1}{s}}}$ when
$\alpha \in \Lambda _E'$, for some
$r>0$. Then $f$ is $E$-periodic and smooth, and
\begin{equation*}
\Vert\partial^{\beta}f\Vert_{L^{\infty}}\lesssim \sup_{\alpha}(\vert \alpha
\vert^{\vert \beta\vert} e^{-r\vert\alpha\vert^{\frac{1}{s}}})
\end{equation*}
for some $r>0$. 
The embeddings $\maclG _{s}^{E}(\rd)\subseteq\maclE _{s}^{E}(\rd)$ and
$\maclG _{0,s}^{E}(\rd)\subseteq\maclE  _{0,s}^{E}(\rd)$ follow if we
prove that 
\begin{equation}\label{Eq:3}
\vert \alpha\vert ^{\vert \beta\vert} e^{-r\vert\alpha\vert ^{\frac{1}{s}}} \lesssim
h^{\vert \beta\vert }\beta !^s,
\end{equation}
for some $h\asymp\frac{1}{r^s}$.
In order to show \eqref{Eq:3} we consider
\begin{equation*}
g(t)=t^k e^{-rt^{\frac{1}{s}}},\qquad t>0.
\end{equation*}
Then
$$
g'(t)=\left ( kt^{k-1} - \frac {
rt^{k+\frac{1}{s}-1}}{s} \right )
e^{-rt^{\frac{1}{s}}}
$$
is equal to $0$, if and only if $t=t_{0}=\left( \frac{ks}{r}\right)^s$ in which $g$
attains its maximum. Hence
\begin{equation*}
0<g(t)\leq g(t_0)= \left( \frac{ks}{r}\right)^{ks} e^{-ks},
\end{equation*}
and Stirling's formula gives 
\begin{equation}\label{Eq:4}
g(t)\lesssim \left(\frac{s}{r}\right)^{sk} k!^s = h_1^k k!^s,
\quad \text{where}\quad
h_1=\left(\frac{s}{r}\right)^s .
\end{equation}
By \eqref{Eq:4} we now get
$$
\vert \alpha\vert ^{\vert\beta\vert} e^{-r|\alpha |^\frac{1}{s}}\lesssim
h_1^{\vert\beta\vert} \vert\beta\vert !^s\leq \left(d^s h_1\right)
^{\vert\beta\vert}\beta! ^s,
$$
which gives \eqref{Eq:3} and thereby \eqref{Eq:1A}.

\par

In order to prove the opposite embedding we let $f\in \maclE _s^{E}(\rd )$.
Since smooth periodic functions agree
with their Fourier series expansions
with absolutely convergent Fourier series, \eqref{Eq:Expan2} holds with
\begin{equation*}
c(f,\alpha ) = |E|^{-1}\int _{E} f(x) e^{-i\scal x\alpha}\, dx.
\end{equation*}
By differentiations we get
\begin{equation*}
\alpha ^{\beta}\vert c(f,\alpha )\vert = \vert c(f^{(\beta)}),\alpha )\vert
\leq C h^{\vert \beta\vert}\beta !^{s},
\end{equation*}
which gives
\begin{equation}\label{Eq:CoeffEst}
\vert c(f,\alpha )\vert \leq C \prod_{j=1}^{d} g_{\alpha _j}(\beta _j)
\end{equation}
where $g_0(t)=1$ and
\begin{equation*}
g_k(t)
=
\frac{h^{t} t^{st}}{k^t}, \qquad t\ge 0,
\end{equation*}
when $k\ge 1$ is an integer. If $k\ge 1$, then $g_k'(t)=0$
exactly for
\begin{equation*}
t =t_0= \frac{k^{\frac{1}{s}}}{h^{\frac{1}{s}}e},
\end{equation*}
in which $g_k$ attains its global maximum.
By straight-forward computations we get
\begin{equation*}
g_k(t)\le g_k(t_0)=e^{-\frac1{h_1} {k^{\frac{1}{s}}}},\qquad h_1\equiv
\frac{h^{\frac{1}{s}}e}{s}.
\end{equation*}

\par

By letting $k=\alpha _j$ and $t=\beta _j$ in the last estimate,
\eqref{Eq:CoeffEst} gives
$$
\vert c(f,\alpha )\vert \leq C \prod_{j=1}^{d} e^{-\frac1{h_1} {\alpha _j^{\frac{1}{s}}}}
\le
Ce^{-\frac1{h_2} | \alpha |^{\frac{1}{s}}},
$$
for some $h_2$ which is proportional to $h_1$.
This shows that equalities hold in \eqref{Eq:1A}, and the result follows.

\par

It remains to consider the case when $s=0$. First assume that
$f\in \maclG _0^E(\rr d)$. Then for some integer $N\ge 0$ we have
\begin{align*}
f(x) &= \sum _{\alpha \in \Lambda _{E,N}'} c(\alpha )e^{i\scal x\alpha},
\intertext{where}
\Lambda _{E,N}' &= \sets {\alpha \in \Lambda _E'}{|\alpha |\le N}
\end{align*}
For every $\beta \in \nn d$ we get
$$
|\partial ^\beta f(x)| \lesssim \max _{\alpha \in \Lambda _{E,N}'} |c(\alpha )\alpha ^\beta |
\lesssim N^{|\beta |},
$$
which implies that $f\in \maclE _0^E(\rr d)$, and we have shown that
$\maclG _0^E(\rr d)\subseteq \maclE _0^E(\rr d)$.

\par

Assume instead that $f\in \maclE _0^E(\rr d)$. Then $f$ is $E$-periodic,
smooth and $|\partial ^\beta f(x)|\lesssim h^{|\beta |}$, for some $h>0$.
This implies that $f$ is given by \eqref{Eq:Expan2}. By differentiations
and Bessel's equality we get
$$
\sum _{\alpha \in \Lambda _E'} |c(f,\alpha )\alpha ^\beta |^2
\asymp \nm {\partial ^\beta f}{L^2(E)}^2 \lesssim h^{2|\beta |}.
$$
This gives
$$
\sup _{\beta \Lambda _E'} \left | c(f,\alpha ) \left ( \frac \alpha h \right )^\beta
\right | <\infty ,
$$
which implies that $c(f,\alpha )=0$ when $|\alpha _j|>h$ for some $j=1,\dots ,d$.
That is, the right-hand side of \eqref{Eq:Expan2} must be a finite sum. Hence
$f\in \maclG _0^E(\rr d)$, and \eqref{Eq:PerGevIdent1} follows.

\par

By \eqref{Eq:PerGevIdent1} it follows that the duals $(\maclE _s^{E})'(\rr d)$ and
$(\maclE _{0,s}^{E})'(\rr d)$ of $\maclE _s^{E}(\rr d)$ and
$\maclE _{0,s}^{E}(\rr d)$ agree with
$(\maclG _s^{E})'(\rr d)$ and $(\maclG _{0,s}^{E})'(\rr d)$
through the forms $(\cdo ,\cdo )_{E}$
and $\scal \cdo \cdo _{E}$.
\end{proof}

\par

\medspace

The following lemma is needed for the proof of the second equalities in
\eqref{Eq:PerGevDistIdent1} and \eqref{Eq:PerGevDistIdent2}.

\par

\begin{lemma}
Let $E\subseteq \rr d$ be a non-degenerate parallelepiped, and
$s,t>0$ be such that $s+t\ge 1$. Then
\begin{align}
(\maclE _{s}^{E})'(\rr d)&\hookrightarrow (\maclS _t^{E,s})'(\rr d),
\label{Eq:PerDistEmb1}
\intertext{and if in addition $(s,t)\neq (\frac 12,\frac 12)$, then}
(\maclE _{0,s}^{E})'(\rr d)&\hookrightarrow (\Sigma _t^{E,s})'(\rr d).
\label{Eq:PerDistEmb2}
\end{align}
\end{lemma}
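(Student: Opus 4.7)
The plan is to reduce the lemma to what is already established in Proposition \ref{Prop:ExpIdent1} and Remark \ref{Rem:PerDistr}. By Proposition \ref{Prop:ExpIdent1}, any $f\in (\maclE _s^E)'(\rd)$ (respectively $f\in (\maclE _{0,s}^E)'(\rd)$) is canonically identified with an element of $(\maclG _s^E)'(\rd)$ (respectively $(\maclG _{0,s}^E)'(\rd)$), and is therefore given by Fourier coefficients $c(f,\alpha)$, $\alpha\in \Lambda _E'$, subject to the growth bound $|c(f,\alpha)|\lesssim e^{r|\alpha|^{1/s}}$ for every $r>0$ (for some $r>0$ in the Beurling setting).

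Next I would define the candidate extension $Tf\in(\maclS _t^s)'(\rd)$ (respectively $(\Sigma _t^s)'(\rd)$) by the formula suggested by \eqref{Eq:PerDistAction},
$$
\scal{Tf}\phi \equiv (2\pi)^{\frac d2}\sum _{\alpha\in \Lambda _E'} c(f,\alpha)\widehat\phi(-\alpha),\qquad \phi\in\maclS _t^s(\rd),
$$
and verify convergence and continuity. For $\phi\in\maclS _{t,h}^s(\rd)$ the standard Gelfand--Shilov characterization yields $|\widehat\phi(\xi)|\lesssim \nm{\phi}{\maclS _{t,h}^s}\, e^{-r_1|\xi|^{1/s}}$ for some $r_1>0$ depending on $h$; matching this with a growth constant $r<r_1$ for $c(f,\alpha)$ makes the series absolutely convergent and gives the required seminorm bound on the step $\maclS _{t,h}^s(\rd)$ of the inductive limit. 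Hence $Tf\in (\maclS _t^s)'(\rd)$ and the map $T$ is continuous. The Beurling analogue is handled identically, noting that $(s,t)\neq (\tfrac 12,\tfrac 12)$ is precisely the condition ensuring $\Sigma _t^s(\rd)\neq \{0\}$, so that the target space is non-trivial.

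Finally I would check $E$-periodicity of $Tf$. For $j\in \Lambda _E$ and $\phi\in\maclS _t^s(\rd)$, using $\widehat{\phi(\cdo-j)}(\xi)=e^{-i\scal j\xi}\widehat\phi(\xi)$,
$$
\scal{Tf(\cdo+j)}\phi
=\scal{Tf}{\phi(\cdo-j)}
=(2\pi)^{\frac d2}\sum _{\alpha\in \Lambda _E'} c(f,\alpha)\, e^{i\scal j\alpha}\widehat\phi(-\alpha),
$$
and since $\scal j\alpha \in 2\pi\mathbf{Z}$ whenever $j\in \Lambda _E$ and $\alpha\in \Lambda _E'$, every exponential factor equals $1$, so $Tf(\cdo+j)=Tf$. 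This places $Tf$ in $(\maclS _t^{E,s})'(\rd)$ (respectively $(\Sigma _t^{E,s})'(\rd)$), completing the embeddings \eqref{Eq:PerDistEmb1} and \eqref{Eq:PerDistEmb2}. The main obstacle is making the continuity estimate for $T$ fully quantitative in the seminorms defining the inductive (respectively projective) limits; the periodicity and injectivity steps are essentially automatic from the Fourier series description.
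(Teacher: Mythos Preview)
Your construction of the map $T$ and the verification of continuity and $E$-periodicity are correct; they essentially reproduce Remark \ref{Rem:PerDistr}, which the paper itself invokes when it writes ``since the continuity is already proved.''

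The paper, however, devotes its proof of this lemma entirely to injectivity, the one ingredient you dismiss as ``essentially automatic.'' Injectivity of $T$ amounts to showing that if
\[
\sum_{\alpha\in\Lambda_E'}c(f,\alpha)\,\widehat\phi(-\alpha)=0
\quad\text{for every }\phi\in\maclS_t^s(\rd),
\]
then every individual $c(f,\alpha_0)$ vanishes. When $t\le 1$ no non-zero $\widehat\phi\in\maclS_s^t(\rd)$ has compact support, so you cannot simply test against a $\phi$ with $\widehat\phi(-\alpha)=\delta_{\alpha,\alpha_0}$; a limiting device is needed. The paper chooses $\phi_{\ep,\alpha_0}$ with $\widehat\phi_{\ep,\alpha_0}(\xi)=\widehat\fy(\ep^{-1}(\xi+\alpha_0))$ for some $\fy$ with $\widehat\fy(0)=1$, and lets $\ep\to 0$; the sub-exponential growth of $c(f,\alpha)$ together with the decay of $\widehat\fy$ justifies passing to the limit termwise, which isolates $c(f,\alpha_0)$. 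The argument is short, but it is the actual content of the lemma in the paper's organization and should be supplied rather than asserted.
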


\par

\begin{proof} 
Let $f\in (\maclE _{s}^{E})'(\rr d)$ ($f\in (\maclE _{0,s}^{E})'(\rr d)$) be
equal to $0$ in $(\maclS _t^s)'(\rr d)$
($(\Sigma _t^s)'(\rr d)$).
Since the continuity is already proved, it remains to show that $f$ is
equal to $0$ in $(\maclE _{s}^{E})'(\rr d)$
($(\maclE _{0,s}^{E})'(\rr d)$).

\par

Let $\alpha _0\in \Lambda _{E}'$ and $\phi _{\ep ,\alpha _0}\in \maclS _t^s(\rr d)$
($\phi _{\ep ,\alpha _0}\in \Sigma _t^s(\rr d)$) be such that
$$
\widehat \phi _{\ep ,\alpha _0} = \widehat \fy (\ep ^{-1}(\xi +\alpha _0)) 
$$
for some function $\fy$ which satisfies $\widehat \fy (0)=1$. Then
$$
0=\scal f{\phi _{\ep ,\alpha _0}} =\sum _{\alpha \in \Lambda _{E}'}
c(f,\alpha )\widehat \fy (\ep ^{-1}(\alpha _0-\alpha )) \to c(f,\alpha _0)
$$
as $\ep \to 0$. Hence $c(f,\alpha )=0$ for every $\alpha$, which shows
that $f=0$ in $(\maclE _{s}^{E})'(\rr d)$ ($(\maclE _{0,s}^{E})'(\rr d)$).
\end{proof}

\par

In order to prove the opposite embeddings to \eqref{Eq:PerDistEmb1}
and \eqref{Eq:PerDistEmb2} we need the following propositions, which
are at the same time main ingredients in the proof of
Theorem \ref{Prop:PerGSDistSTFT2}. They, show that periodic Gelfand-Shilov
distributions and periodic elements in Gevrey classes can be characterized by
suitable estimates of short-time Fourier transforms.

\par

\begin{prop}\label{Prop:PerGevrSTFT}
Let $E\subseteq \rr d$ be a non-degenerate parallelepiped, $s,t\ge 0$
be such that $s+t> 1$, $f$ be a $E$-periodic
Gelfand-Shilov distribution on $\rr d$, and let $\phi \in \maclS _t^s(\rr d)\setminus 0$
($\phi \in \Sigma _t^s(\rr d)\setminus 0$). If $f\in \maclE _s^{E}(\rr d)$
($f\in \maclE _{0,s}^{E}(\rr d)$), then
\begin{equation}\label{Eq:STFT}
|V_\phi f(x,\xi )|\lesssim e^{-r|\xi |^{\frac 1s}},
\end{equation}
for some $r>0$ (for every $r>0$).
\end{prop}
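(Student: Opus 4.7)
My approach is to expand $f$ in its Fourier series, compute the short-time Fourier transform term by term, and control the resulting series by combining the decay of the Fourier coefficients of $f$ with the decay of $\widehat{\phi}$.

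First, by Proposition \ref{Prop:ExpIdent1} we have $\maclE _s^{E}(\rr d)=\maclG _s^{E}(\rr d)$ (respectively $\maclE _{0,s}^{E}(\rr d)=\maclG _{0,s}^{E}(\rr d)$), so $f$ admits the absolutely convergent Fourier expansion
$$
f=\sum _{\alpha \in \Lambda _E'} c(f,\alpha )e^{i\scal \cdo \alpha}
$$
with $|c(f,\alpha )|\lesssim e^{-r_0|\alpha |^{1/s}}$ for some $r_0>0$ (respectively, for every $r_0>0$). A direct calculation, substituting $z=y-x$ in the defining integral, yields
$$
V_\phi (e^{i\scal \cdo \alpha})(x,\xi )= e^{i\scal x{\alpha -\xi }}\,\overline{\widehat \phi (\alpha -\xi )},
$$
so that in particular $|V_\phi (e^{i\scal \cdo \alpha})(x,\xi )|=|\widehat \phi (\alpha -\xi )|$. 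Since the series for $f$ converges in an appropriate distributional sense and $V_\phi$ acts continuously, one obtains by linearity
$$
|V_\phi f(x,\xi )|\le \sum _{\alpha \in \Lambda _E'} |c(f,\alpha )|\,|\widehat \phi (\alpha -\xi )|.
$$

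Next I apply the characterization \eqref{Eq:GSFtransfChar}: since $\phi \in \maclS _t^s(\rr d)$ (respectively $\Sigma _t^s(\rr d)$), we have $|\widehat \phi (\eta )|\lesssim e^{-r_1|\eta |^{1/s}}$ for some $r_1>0$ (respectively, for every $r_1>0$). Combining the two decay estimates gives
$$
|V_\phi f(x,\xi )|\lesssim \sum _{\alpha \in \Lambda _E'} e^{-r_0|\alpha |^{1/s}-r_1|\alpha -\xi |^{1/s}}.
$$

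The main work is now to extract an $e^{-r|\xi |^{1/s}}$ factor while retaining summability over $\alpha$. The key tool is the quasi-triangle inequality
$$
|\xi |^{1/s}\le c_s\bigl (|\alpha |^{1/s}+|\alpha -\xi |^{1/s}\bigr ),
$$
where $c_s=1$ when $s\ge 1$ (subadditivity of $u\mapsto u^{1/s}$) and $c_s=2^{1/s-1}$ when $0<s<1$ (convexity, after applying $(a+b)^{1/s}\le 2^{1/s-1}(a^{1/s}+b^{1/s})$). Splitting $r_0=r_0/2+r_0/2$ and setting $r=\tfrac{1}{2c_s}\min (r_0,2r_1)$ gives
$$
r_0|\alpha |^{1/s}+r_1|\alpha -\xi |^{1/s}\ge \tfrac{r_0}{2}|\alpha |^{1/s}+r|\xi |^{1/s},
$$
so that
$$
|V_\phi f(x,\xi )|\lesssim e^{-r|\xi |^{1/s}}\sum _{\alpha \in \Lambda _E'} e^{-\frac{r_0}{2}|\alpha |^{1/s}}\lesssim e^{-r|\xi |^{1/s}},
$$
since the sum converges. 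This establishes \eqref{Eq:STFT} in the Roumieu case. For the Beurling case, given any target $r>0$, one first selects $r_1$ (hence $\phi$'s decay) and $r_0$ (the coefficient decay) large enough that $\tfrac{1}{2c_s}\min (r_0,2r_1)\ge r$; the same computation then produces the bound with that $r$.

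The main obstacle is purely the last step: handling both the Roumieu and Beurling regimes uniformly and keeping track of the constant $c_s$ separating the cases $s\ge 1$ and $s<1$ in the quasi-triangle inequality. Once the correct constant is identified, the summability of $\sum _{\alpha}e^{-\epsilon |\alpha |^{1/s}}$ (which follows since $\Lambda _E'$ is a lattice and $1/s>0$) closes the argument.
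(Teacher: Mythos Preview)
Your argument is correct and follows essentially the same route as the paper's proof: expand $f$ via its Fourier series (using Proposition \ref{Prop:ExpIdent1}), bound $|V_\phi f(x,\xi )|$ by $\sum _\alpha |c(f,\alpha )||\widehat \phi (\alpha -\xi )|$, and then use the quasi-triangle inequality for $|\cdo |^{1/s}$ together with a splitting of the coefficient decay to simultaneously extract the factor $e^{-r|\xi |^{1/s}}$ and retain summability in $\alpha$. The paper writes the coefficient bound as $e^{-2r|\alpha |^{1/s}}$ and peels off one $e^{-r|\alpha |^{1/s}}$ for the lattice sum, which is exactly your $r_0\to r_0/2+r_0/2$ split with different bookkeeping; your explicit tracking of $c_s$ and of the Beurling case (which the paper leaves to the reader) is a welcome addition.
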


\par

\begin{prop}\label{Prop:PerGSDistSTFT}
Let $E\subseteq \rr d$ be a non-degenerate parallelepiped, $s,t\ge 0$ be such that
$s+t> 1$, $f$ be a $E$-periodic
Gelfand-Shilov distribution on $\rr d$, and let $\phi \in \maclS _t^s(\rr d)\setminus 0$
($\phi \in \Sigma _t^s(\rr d)\setminus 0$). Then the following conditions are
equivalent:
\begin{enumerate}
\item $f\in (\maclS _t^s)'(\rr d)$ ($f\in (\Sigma _t^s)'(\rr d)$);

\vrum

\item $|V_\phi f(x,\xi )|\lesssim e^{r|\xi |^{\frac 1s}}$ for every $r>0$
(for some $r>0$).
\end{enumerate}
\end{prop}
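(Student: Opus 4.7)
The plan is to reduce the proposition to its non-periodic counterpart, Proposition \ref{stftGelfand2dist}, by exploiting one extra piece of structure: for an $E$-periodic distribution $f$, the function $x\mapsto |V_\phi f(x,\xi)|$ is itself $\Lambda_E$-periodic. I treat the Roumieu and Beurling cases in parallel; each quantifier on $r>0$ is preserved at every step.

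The direction $(2)\Rightarrow(1)$ is immediate. The estimate $|V_\phi f(x,\xi)|\lesssim e^{r|\xi|^{1/s}}$ majorises the joint bound $e^{r(|x|^{1/t}+|\xi|^{1/s})}$ from Proposition \ref{stftGelfand2dist}, with the corresponding quantifier, so $f$ lies in $(\maclS_t^s)'(\rr d)$, respectively $(\Sigma_t^s)'(\rr d)$.

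For the converse, I first establish the translation identity
\[
V_\phi f(x+j,\xi)\;=\;e^{-i\scal j\xi}\,V_\phi f(x,\xi),\qquad j\in\Lambda_E.
\]
When $f$ and $\phi$ are smooth this is a one-line change of variables $y\mapsto y+j$ in the defining integral, combined with $f(y+j)=f(y)$. For distributional $f$ the same identity follows by unpacking $V_\phi f(x,\xi)$ as the dual pairing $(2\pi)^{-d/2}\scal f{\overline{\phi(\cdo -x)}e^{-i\scal\cdo\xi}}$ and using the $\Lambda_E$-invariance of $f$ applied to the translated test function; alternatively one approximates and invokes the continuity of $V_\phi$ from $(\maclS_t^s)'(\rr d)\times\maclS_t^s(\rr d)$ into $(\maclS_{t,s}^{s,t})'(\rr{2d})$ recalled in the excerpt. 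Taking moduli, $|V_\phi f(\cdo,\xi)|$ is $\Lambda_E$-periodic in $x$.

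To conclude, apply Proposition \ref{stftGelfand2dist} to obtain, for every (respectively some) $r>0$, the two-variable bound $|V_\phi f(x',\xi)|\lesssim e^{r(|x'|^{1/t}+|\xi|^{1/s})}$. Writing any $x\in\rr d$ as $x=x'+j$ with $x'\in E$ and $j\in\Lambda_E$, the periodicity reduces the estimate to $x'\in E$, and since $E$ is bounded the factor $e^{r|x'|^{1/t}}$ is uniformly bounded and absorbed into the implicit constant, yielding $|V_\phi f(x,\xi)|\lesssim e^{r|\xi|^{1/s}}$ with the correct quantifier. The only delicate point in the plan is the distributional version of the translation identity; everything else is routine bookkeeping once that identity is in hand.
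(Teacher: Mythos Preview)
Your proof is correct and follows essentially the same route as the paper: both directions reduce to Proposition \ref{stftGelfand2dist}, with the key observation that $E$-periodicity of $f$ forces $|V_\phi f(\,\cdot\,,\xi)|$ to be $\Lambda_E$-periodic, so the $x$-growth factor $e^{r|x|^{1/t}}$ can be absorbed into the constant. The paper phrases the reduction as taking the infimum over lattice translates $j\in\Lambda_E$ rather than decomposing $x=x'+j$ with $x'\in E$, but this is the same argument.
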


\par

\begin{proof}[Proof of Proposition \ref{Prop:PerGevrSTFT}]
We only prove the assertion in the Roumeu case. The Beurling
case follows by similar arguments and is left for the reader.

\par

For some $r>0$ we have
$$
|\widehat \phi (\xi )|\lesssim e^{-r|\xi |^{\frac 1s}}.
$$
Assume that $f\in \maclE _s^{E}(\rr d)$. % and $\maclS _t^s(\rr d)$.
Then $f$ is given by \eqref{Eq:Expan2}, where
$$
|c(\alpha )|\lesssim e^{-2r|\alpha |^{\frac 1s}}
$$
for some $r>0$ which is independent of $\alpha \in \Lambda _E'$
and $\xi \in \rr d$. Hence, for some $r>0$ we have
\begin{multline*}
|V_\phi f(x,\xi )| \le \sum _{\alpha \in \Lambda _{E}'}|c(\alpha )
V_\phi (e^{i\scal \cdo \alpha })(x,\xi )|
\\[1ex]
=
\sum _{\alpha \in \Lambda _{E}'}|c(\alpha )
\widehat \phi (\xi -\alpha )|
%\\[1ex]
\lesssim
\sum _{\alpha \in \Lambda _{E}'} e^{-r|\alpha |^{\frac 1s}}
e^{-r(|\alpha |^{\frac 1s} +|\xi - \alpha |^{\frac 1s})}
\\[1ex]
\lesssim
\sup _{\eta \in \rr d} ( e^{-r(|\eta |^{\frac 1s} +|\xi - \eta |^{\frac 1s})} )
\le e^{-rc|\xi |^{\frac 1s}},
\end{multline*}
for some $c>0$ which only depends on $s$. This gives the result.
\end{proof}

\par

\begin{proof}[Proof of Proposition \ref{Prop:PerGSDistSTFT}]
Again we only prove the assertion in the Roumeeu case, leaving
the Beurling case for the reader.

\par

Assume that (1) holds. By
Proposition \ref{stftGelfand2dist} we get
$$
|V_\phi f(x,\xi )|\lesssim e^{r(|x|^{\frac 1t}+|\xi |^{\frac 1s})}
$$
for every $r>0$. Since $f$ is $E$-periodic, it follows that the same holds
true for the map $x\mapsto |V_\phi f(x,\xi )|$, and the previous estimate gives
$$
|V_\phi f(x,\xi )|= |V_\phi f(x+j ,\xi )|\lesssim
e^{r(|x+j|^{\frac 1t}+|\xi |^{\frac 1s})}
$$
for every $j \in \Lambda _{E}$.
By taking the infimum over all $j \in \Lambda _{E}$ we get
$$
|V_\phi f(x,\xi )|\lesssim e^{r|\xi |^{\frac 1s}}
$$
for every $r>0$, and (2) follows.

\par

If instead (2) holds, then
$$
|V_\phi f(x,\xi )|\lesssim e^{r(|x|^{\frac 1t}+|\xi |^{\frac 1s})}
$$
for every $r>0$, and Proposition \ref{stftGelfand2dist} shows that
$f\in (\maclS _t^s)'(\rr d)$. This gives the result.
\end{proof}

\par

\begin{rem}\label{Rem:STFTDescrForm}
Let $f$ and $\psi$ be the same as in Theorem \ref{Prop:STFTDescrForm}.
Then \eqref{Eq:STFTL1Prop} holds in view of Propositions
\ref{Prop:PerGSDistSTFT} and \ref{Prop:PerGevrSTFT}, which
implies that the right-hand side of \eqref{Eq:STFTDescrForm}
makes sense.

\par

From these properties and \eqref{Eq:GSFtransfChar} it
follows that
$$
\sum _{\alpha ,\beta \in \Lambda _{E}'}
|c(f,\alpha )c(\psi ,\beta )\widehat \phi (\xi -\alpha )\widehat \phi (\xi -\beta )|<\infty
$$
for every fixed $\xi \in \rr d$.
\end{rem}

\par

\begin{proof}[Proof of Theorem \ref{Prop:STFTDescrForm}]
Again we only prove the result in the Roumieu case, leaving the Beurling case
for the reader.

\par

By straight-forward computations we get
$$
(V_\phi f)(x,\xi ) = e^{-i\scal x\xi}\sum _{\alpha \in \Lambda _{E}'}
c(f,\alpha )\overline{\widehat \phi (\alpha -\xi )}e^{i\scal x\alpha}.
$$
%\begin{align*}
%f &= \sum 
%\end{align*}
%%
Hence Remark \ref{Rem:STFTDescrForm}, and
Weierstrass and Fubbini's theorems give
\begin{multline*}
\int _{E}
\left (
\int _{\rr d} (V_\phi f)(x,\xi )\overline{(V_\phi \psi)(x,\xi )}\, d\xi
\right )
\, dx
\\[1ex]
=
\int _{\rr d}
\left (
\int _{E} 
\sum _{\alpha ,\beta \in \Lambda _{E}'}
c(f,\alpha )\overline {c(\psi ,\beta )}\, \overline{\widehat \phi (\alpha -\xi )}
{\widehat \phi (\beta -\xi )}e^{i\scal x{\alpha -\beta}}
\, dx
\right )
\, d\xi
\\[1ex]
=
\int _{\rr d}
\left ( 
\sum _{\alpha ,\beta \in \Lambda _{E}'}
c(f,\alpha )\overline {c(\psi ,\beta )}
\, \overline{\widehat \phi (\alpha -\xi )}
{\widehat \phi (\beta -\xi )}\int _{E}e^{i\scal x{\alpha -\beta}}
\, dx
\right )
\, d\xi
\\[1ex]
=
|E|
\int _{\rr d}
\left ( 
\sum _{\alpha \in \Lambda _{E}'}
c(f,\alpha )\overline {c(\psi ,\alpha )}
|\widehat \phi (\alpha -\xi )|^2
\right )
\, d\xi
\end{multline*}

\par

Since
$$
\sum _{\alpha \in \Lambda _{E}'}
|c(f,\alpha )\overline {c(\psi ,\alpha )}|
\int _{\rr d}|\widehat \phi (\alpha -\xi )|^2\, d\xi
=
\nm \phi {L^2}^2\sum _{\alpha \in \Lambda _{E}'}
|c(f,\alpha )\overline {c(\psi ,\alpha )}|
<\infty ,
$$
an other application of Weierstrass theorem now gives
\begin{multline*}
\int _{\rr d}
\left ( 
\sum _{\alpha \in \Lambda _{E}'}
c(f,\alpha )\overline {c(\psi ,\alpha )}
|\widehat \phi (\alpha -\xi )|^2
\right )
\, d\xi
\\[1ex]
=
\sum _{\alpha  \in \Lambda _{E}'}
c(f,\alpha )\overline {c(\psi ,\alpha )}
\int _{\rr d} |\widehat \phi (\alpha -\xi )|^2
\, d\xi
\\[1ex]
=
\nm \phi {L^2}^2\sum _{\alpha \in \Lambda _{E}'}
c(f,\alpha )\overline {c(\psi ,\alpha )}
=
\nm \phi {L^2}^2 (f,\psi )_{E},
\end{multline*}
and the result follows by combining these equalities.
\end{proof}

\par

In the following definition
we assign any element in
$(\maclS^{E,s}_t)'(\rd)$ ($(\Sigma^{E,s}_t)'(\rd)$), an element in
$(\maclE^{E}_s)'(\rd)$ ($(\maclE^{E}_{0,s})'(\rd)$). Then we prove that
the latter element agrees with the former one as element in
$(\maclS^{E,s}_t)'(\rd)$ ($(\Sigma^{E,s}_t)'(\rd)$), which will give
the last part of Theorem \ref{Thm:Equality}.
%%%%%%%%%%%%%%%%%%%%

\par

\begin{defn}\label{Def:FourSer}
Let $E\subseteq \rr d$ be a non-degenerate parallelepiped,
$s,t>0$ be such that $s+t\geq 1$,
$f\in (\maclS_t^{E,s})'(\rd)$ $(f\in(\Sigma_t^{E,s})'(\rd))$, and
let $\phi\in \maclS_t^s(\rd)\setminus 0$ $(\phi\in\Sigma_t^s(\rd )
\setminus 0)$.
\begin{enumerate}
\item The Fourier coefficient $c(f,\alpha)$ for $f$ of order $\alpha\in
\Lambda _{E}'$ is given by \eqref{Eq:STFTDescrForm};

\vrum

\item The Fourier series of $f$ with respect to $E$ is given by
$$
\Fo{E}(f)\equiv \underset{\alpha\in \Lambda _{E}'}\sum c(f,\alpha)
e^{i\scal \cdo \alpha}.
$$
\end{enumerate}
\end{defn}

\par

%%%%%%%%%%%%%%%%%%%%%
Evidently by \eqref{Eq:GSFtransfChar} and
Proposition \ref{Prop:PerGSDistSTFT}
and definitions it
follows that $c(f,\alpha)$ in Definition is well-defined. Hence $\Fo{E}(f)$
in Definition \ref{Def:FourSer} exists as an element in
$(\maclE_0^{E})'(\rd)$.

\par 

Theorem \ref{Thm:Equality} is an immediate consequence of
\eqref{Eq:PerDistEmb1}, \eqref{Eq:PerDistEmb2} and the
following result.

\par

\begin{prop}\label{Prop:Equality}
Let $E\subseteq \rr d$ be a non-degenerate parallelepiped,
$s,t>0$ be such that $s+t\geq 1$, and let $f\in
(\maclS_t^{E,s})'(\rd)$ $(f\in (\Sigma_t^{E,s}) '(\rd))$. Then the
following is true:
\begin{enumerate}
\item $\Fo{E}(f)\in (\maclE_s^{E})'(\rd)$ $(\Fo{E}(f)\in
(\maclE_{0,s}^{E})'(\rd))$.

\vrum

\item $\Fo{E}(f)=f$ as elements in $(\maclS_t^s)'(\rd)$ $((\Sigma_t^s)'(\rd))$.
\end{enumerate}
\end{prop}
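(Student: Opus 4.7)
The proof splits along the two claims. Part (1) reduces, via Proposition \ref{Prop:ExpIdent1}, to estimating the Fourier coefficients $c(f,\alpha)$ defined by the integral in Remark \ref{Rem:FourCoeff}. I would insert into that integrand the bound $|V_\phi f(x,\xi)|\lesssim e^{r_1|\xi|^{1/s}}$ from Proposition \ref{Prop:PerGSDistSTFT} (valid for every $r_1>0$ in the Roumieu case and some $r_1>0$ in the Beurling case) together with the Gelfand-Shilov decay $|\widehat\phi(\alpha-\xi)|\lesssim e^{-r_2|\alpha-\xi|^{1/s}}$, and then apply the sub-additivity $|\xi|^{1/s}\le C_s(|\alpha|^{1/s}+|\alpha-\xi|^{1/s})$. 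Choosing $r_1$ small (Roumieu) or $r_2$ large (Beurling) absorbs the $|\alpha-\xi|^{1/s}$ exponential and leaves a finite $\xi$-integral, yielding $|c(f,\alpha)|\lesssim e^{r|\alpha|^{1/s}}$ with $r$ arbitrary (Roumieu) or some fixed positive $r$ (Beurling). By the dual characterization of $(\maclG_s^E)'(\rd)$ respectively $(\maclG_{0,s}^E)'(\rd)$, this places $\Fo{E}(f)$ in $(\maclE_s^E)'(\rd)$ respectively $(\maclE_{0,s}^E)'(\rd)$.

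For part (2), the strategy is to establish the representation
\begin{equation*}
\scal h\psi = (2\pi)^{d/2}\sum_{\alpha\in\Lambda_E'} c(h,\alpha)\,\widehat\psi(-\alpha),
\qquad \psi\in\maclS_t^s(\rd),
\end{equation*}
for every $E$-periodic Gelfand-Shilov distribution $h$, with $c(h,\alpha)$ the STFT-defined coefficients. Applying this formula to $h=f$ and to $h=\Fo{E}(f)$ gives $\scal{\Fo{E}(f)}\psi=\scal f\psi$ as soon as $c(\Fo{E}(f),\alpha)=c(f,\alpha)$, which follows by a direct computation that substitutes $V_\phi(e^{i\scal\cdo\beta})(x,\xi)=e^{i\scal x{\beta-\xi}}\overline{\widehat\phi(\beta-\xi)}$ into the defining formula for $c(\Fo{E}(f),\alpha)$ and uses the orthogonality $\int_E e^{i\scal x{\beta-\alpha}}\,dx=|E|\,\delta_{\alpha\beta}$ on $\Lambda_E'$ to collapse the double sum to a single term.

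To prove the displayed identity I would start from the Moyal-type inversion
\begin{equation*}
\scal h\psi = \nm\phi{L^2}^{-2}\int_\rdd V_\phi h(x,\xi)\,\overline{V_\phi\overline\psi(x,\xi)}\,dx\,d\xi,
\end{equation*}
split the $x$-integration over $\Lambda_E$-cosets, and exploit the quasi-periodicity $V_\phi h(x+j,\xi)=e^{-i\scal j\xi}V_\phi h(x,\xi)$ together with the matching translation law $V_\phi\overline\psi(x+j,\xi)=e^{-i\scal j\xi}V_\phi(\overline\psi(\cdo+j))(x,\xi)$. The phase factors cancel, the sum over $j\in\Lambda_E$ produces $V_\phi(P\overline\psi)(x,\xi)$ where $P\overline\psi(y):=\sum_{j\in\Lambda_E}\overline\psi(y+j)$ is $E$-periodic, and expanding $P\overline\psi$ in its Fourier series---whose coefficients are $(2\pi)^{d/2}|E|^{-1}\widehat{\overline\psi}(\alpha)$ by Poisson summation---isolates each $\alpha$-mode and reduces the inner double integral to $\nm\phi{L^2}^2|E|\,c(h,\alpha)$. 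Collecting prefactors yields the identity.

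The main obstacle is justifying the interchanges of sums and integrals. Absolute convergence of all involved expressions follows by combining the sub-exponential growth of $V_\phi h$ from Proposition \ref{Prop:PerGSDistSTFT}, the Gelfand-Shilov decay of $\widehat\phi$ and $\widehat\psi$, and the same sub-additivity inequality used in part (1). This parallels the Fubini/Weierstrass argument already used in the proof of Theorem \ref{Prop:STFTDescrForm} and recorded in Remark \ref{Rem:STFTDescrForm}. The Beurling case is handled identically with $\maclS_t^s$ replaced by $\Sigma_t^s$ and the quantifiers on the various $r$'s flipped accordingly.
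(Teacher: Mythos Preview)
Your proposal is correct and rests on the same ingredients as the paper's proof: the integral formula of Remark~\ref{Rem:FourCoeff} combined with Proposition~\ref{Prop:PerGSDistSTFT} and the sub-additivity of $|\cdot|^{1/s}$ for part~(1), and Poisson summation together with a Moyal/Parseval identity and the $E$-periodicity of $V_\phi f(\cdot,\xi)$ for part~(2). The only difference is organizational: the paper starts from $\scal{\Fo{E}(f)}\psi$ (via \eqref{Eq:PerDistAction}), substitutes the STFT formula for each $c(f,\alpha)$, applies Poisson summation to the resulting $\alpha$-sum, and then uses Parseval in $\xi$ at fixed $x$ together with Lemmas~\ref{Lem:GSTensors}--\ref{Lem:EstGSSums} to arrive at $\scal f\psi$; you instead start from the Moyal formula for $\scal f\psi$, tile the $x$-integral by $\Lambda_E$, periodize $\overline\psi$, and expand the periodization in Fourier series to reach the same identity from the other side. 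Your separate verification that $c(\Fo{E}(f),\alpha)=c(f,\alpha)$ is harmless but redundant: once the displayed identity is established for $h=f$, the equality $\scal{\Fo{E}(f)}\psi=\scal f\psi$ follows directly from \eqref{Eq:PerDistAction} applied to $\Fo{E}(f)$, which holds by construction.
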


\par

We need some preparations for the proof. First we recall that the usual
properties on tensor products also hold for Gelfand-Shilov distributions. More
precisely, the following result follows by similar arguments as the proof of
\cite[Theorem 5.1.1]{Ho1}. The details are left for the reader.

\par

\begin{lemma}\label{Lem:GSTensors}
Let $s,t>0$ and $f_j\in (\maclS _t^s)'(\rr {d_j})$ ($f_j\in (\Sigma _t^s)'(\rr {d_j})$), $j=1,2$.
Then there is a unique $f\in (\maclS _t^s)'(\rr {d_1+d_2})$ ($f\in (\Sigma _t^s)'(\rr {d_1+d_2})$)
such that
$$
\scal f{\phi _1\otimes \phi _2} = \scal {f_1}{\phi _1}\scal {f_2}{\phi _2}
\quad \text{when}\quad
\phi _j\in \maclS _t^s(\rr {d_j})\ 
\text (
\phi _j\in \Sigma _t^s(\rr {d_j})
\text ),\ j=1,2.
$$
If $\phi \in \maclS _t^s(\rr {d_1+d_2})$ ($\phi \in \Sigma _t^s(\rr {d_1+d_2})$),
$$
\psi _1(x_1) = \scal {f_2}{\phi (x_1,\cdo )}
\quad \text{and}\quad
\psi _2(x_2) = \scal {f_1}{\phi (\cdo ,x_2 )},
$$
then $\psi _j\in \maclS _t^s(\rr {d_j})$
($\psi _j\in \Sigma _t^s(\rr {d_j})$), $j=1,2$, and
$$
\scal f\phi = \scal {f_1}{\psi _1} = \scal {f_2}{\psi _2}.
$$
\end{lemma}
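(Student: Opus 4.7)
The plan is to follow the classical scheme for the Schwartz tensor product (\cite[Theorem~5.1.1]{Ho1}), but with the Schwartz seminorms replaced by the Gelfand--Shilov seminorms $\nm \cdot {\maclS_{t,h}^s}$. The main technical content is to verify that the slicing operation $\phi \mapsto \scal{f_2}{\phi(x_1,\cdo)}$ sends $\maclS_t^s(\rr{d_1+d_2})$ continuously into $\maclS_t^s(\rr{d_1})$ (and similarly with the roles reversed, and with $\Sigma_t^s$ in place of $\maclS_t^s$). Once this is in place, the existence, uniqueness and Fubini identity all follow by soft arguments.

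First, I would observe that for fixed $x_1\in \rr{d_1}$ the slice $\phi(x_1,\cdo)$ lies in $\maclS_t^s(\rr{d_2})$, and the assignment $x_1\mapsto \phi(x_1,\cdo)$ is smooth into this space with $\partial_{x_1}^\beta \phi(x_1,\cdo) = (\partial_{x_1}^\beta \phi)(x_1,\cdo)$. From this and the continuity of $f_2$, the function
$$
\psi_1(x_1) = \scal{f_2}{\phi(x_1,\cdo)}
$$
is smooth with $\partial_{x_1}^\beta \psi_1(x_1) = \scal{f_2}{\partial_{x_1}^\beta \phi(x_1,\cdo)}$. The heart of the proof is then the estimate
$$
\bigl|x_1^\alpha \partial_{x_1}^\beta \psi_1(x_1)\bigr|
\;\lesssim\; \sup_{\alpha',\beta',y}\frac{|y^{\alpha'}\partial_{x_1}^\beta \partial_{y}^{\beta'}(x_1^\alpha\phi(x_1,y))|}{h_0^{|\alpha'|+|\beta'|}\alpha'!^{\,t}\beta'!^{\,s}}
\;\lesssim\; h^{|\alpha|+|\beta|}\alpha!^{\,t}\beta!^{\,s},
$$
obtained by inserting into the continuity bound for $f_2$ on $\maclS^{s}_{t,h_0}(\rr{d_2})$ (which holds for a suitable $h_0$ in the Roumieu case, and for all $h_0$ in the Beurling case) and distributing the $x_1^\alpha$ factor across $\phi$. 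Thus $\psi_1\in \maclS_t^s(\rr{d_1})$ and the linear map $\phi\mapsto \psi_1$ is continuous $\maclS_t^s(\rr{d_1+d_2})\to \maclS_t^s(\rr{d_1})$; the analogous statement holds for $\psi_2$.

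With these slicing maps in hand, I would define
$$
\scal f\phi \equiv \scal{f_1}{\psi_1},
$$
which is then a continuous linear functional on $\maclS_t^s(\rr{d_1+d_2})$, i.e.\ an element of $(\maclS_t^s)'(\rr{d_1+d_2})$. On pure tensors $\phi=\phi_1\otimes\phi_2$ one has $\psi_1(x_1)=\phi_1(x_1)\scal{f_2}{\phi_2}$, so $\scal f{\phi_1\otimes\phi_2}=\scal{f_1}{\phi_1}\scal{f_2}{\phi_2}$, which is the required tensor identity. Uniqueness of $f$ follows from the density of $\maclS_t^s(\rr{d_1})\otimes \maclS_t^s(\rr{d_2})$ in $\maclS_t^s(\rr{d_1+d_2})$ (obtained for instance by a Hermite-function expansion, cf.\ \cite{Toft18}): any two candidates agree on a dense subspace and are continuous, hence coincide. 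The equality $\scal{f_1}{\psi_1}=\scal{f_2}{\psi_2}$ is then obtained by first verifying it on tensor products, where both sides equal $\scal{f_1}{\phi_1}\scal{f_2}{\phi_2}$, and then extending by density and the continuity of both slicing maps established above.

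The main obstacle is the quantitative slicing estimate: one must propagate the factorial growth rates $\alpha!^{\,t}$ and $\beta!^{\,s}$ through the pairing with $f_2$ without losing the Gelfand--Shilov structure, and one must handle the Roumieu case (where only \emph{some} $h_0$ works for $f_2$, and the resulting $h$ depends on $h_0$ and on the seminorm bounds of $\phi$) and the Beurling case (where \emph{every} $h_0$ must be accommodated) in parallel. Once the Leibniz distribution of $x_1^\alpha$ is done carefully and the density of tensor products in $\maclS_t^s(\rr{d_1+d_2})$ (and $\Sigma_t^s(\rr{d_1+d_2})$) is invoked, the remainder of the proof is formal.
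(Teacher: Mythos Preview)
Your proposal is correct and follows exactly the route the paper indicates: the paper does not actually give a proof of this lemma, but merely states that it ``follows by similar arguments as the proof of \cite[Theorem 5.1.1]{Ho1}'' and leaves the details to the reader. You have supplied precisely those details---the slicing estimate in the Gelfand--Shilov seminorms, the definition of $f$ via $\scal f\phi = \scal{f_1}{\psi_1}$, and the density argument for uniqueness and the Fubini identity---so your write-up is in fact more complete than what the paper provides.
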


\par

We recall that $f$ in Lemma \ref{Lem:GSTensors} is called the tensor product
of $f_1$ and $f_2$ and is usually denoted by $f_1\otimes f_2$.

\par

We also have the following.

\par

\begin{lemma}\label{Lem:EstGSSums}
Let $s,t>0$, $E\subseteq \rr d$ be a non-degenerate parallelepiped, $f\in (\maclS _t^s)'(\rr d)$
($f\in (\Sigma _t^s)'(\rr d)$) and let $\phi _{0},\psi \in
\maclS_t^s(\rd)$ ($\phi_{0},\ \psi\in \Sigma_t^s(\rd)$). Then the following is true:
\begin{enumerate}
\item $\underset{x\in \rd}\sup
\Big ( \sum _{k\in \Lambda_E} |\scal {f }{\phi_{0}(\cdo-x+k)\psi} | \Big )<\infty $.

\vrum

\item $\sum _{k\in \Lambda_E}\phi_{0}(\cdo+k)\psi$ converges in
$\maclS_t^s(\rd )$ ($\Sigma_t^s(\rd)$).
\end{enumerate}
\end{lemma}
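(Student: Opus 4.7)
The plan is to reduce both (1) and (2) to the single key estimate
\begin{equation}\label{Eq:KeyDecayPlan}
\nm{\phi_0(\cdo - y)\psi}{\maclS_{t,h}^s}\lesssim e^{-r'|y|^{1/t}}, \qquad y\in \rr d,
\end{equation}
which will hold for some $h>0$ and some $r'>0$ in the Roumieu case, and for every $h>0$ and every $r'>0$ in the Beurling case. Granting \eqref{Eq:KeyDecayPlan}, assertion (2) follows by taking $y=-k$: the tails $\sum_{|k|\ge N}\phi_0(\cdo + k)\psi$ have $\maclS_{t,h}^s$-seminorm dominated by $C\sum_{|k|\ge N}e^{-r'|k|^{1/t}}$, which is the tail of a convergent sum (for any $r'>0$ and $t>0$ the lattice sum $\sum_{k\in\Lambda_E}e^{-r'|k|^{1/t}}$ is finite), so the partial sums are Cauchy in $\maclS_{t,h}^s(\rr d)$ for the appropriate $h$, giving convergence in $\maclS_t^s(\rr d)$ respectively $\Sigma_t^s(\rr d)$. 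For (1), the continuity of $f\in (\maclS_t^s)'(\rr d)$ yields $|\scal f g|\lesssim \nm g{\maclS_{t,h}^s}$ for some $h$, so \eqref{Eq:KeyDecayPlan} applied at $y=x-k$ gives
\[
\sum_{k\in \Lambda _E}|\scal f{\phi_0(\cdo - x + k)\psi}|\lesssim \sum_{k\in \Lambda _E} e^{-r'|k-x|^{1/t}}.
\]
The right-hand side is a $\Lambda_E$-periodic continuous function of $x$, hence bounded on $\rr d$, proving (1).

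To establish \eqref{Eq:KeyDecayPlan} I would invoke the standard pointwise characterization of Gelfand-Shilov functions: since $\phi_0,\psi\in\maclS_t^s(\rr d)$, there exist $h_0,h_\psi,r_0,r_\psi>0$ with
\[
|\partial^\gamma \phi_0(z)|\lesssim h_0^{|\gamma|}\gamma!^s e^{-r_0|z|^{1/t}}
\quad\text{and}\quad
|\partial^\delta \psi(z)|\lesssim h_\psi^{|\delta|}\delta!^s e^{-r_\psi|z|^{1/t}}.
\]
Expanding $\partial^\alpha(\phi_0(z-y)\psi(z))$ by Leibniz, using $\gamma!^s(\alpha-\gamma)!^s \le \alpha!^s$ and the binomial identity $\sum_\gamma \binom{\alpha}{\gamma}h_0^{|\gamma|}h_\psi^{|\alpha-\gamma|}=(h_0+h_\psi)^{|\alpha|}$, I obtain
\[
|\partial^\alpha(\phi_0(z-y)\psi(z))|\lesssim \tilde h^{|\alpha|}\alpha!^s\, e^{-2r(|z-y|^{1/t}+|z|^{1/t})},
\]
with $\tilde h=h_0+h_\psi$ and $r=\tfrac12\min(r_0,r_\psi)$. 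The elementary power-mean bound $|z-y|^{1/t}+|z|^{1/t}\ge c_t|y|^{1/t}$ (valid for $t>0$ with $c_t>0$ depending only on $t$) permits the splitting
\[
2r(|z-y|^{1/t}+|z|^{1/t})\ge rc_t|y|^{1/t}+r(|z-y|^{1/t}+|z|^{1/t}),
\]
which extracts the desired factor $e^{-rc_t|y|^{1/t}}$ while preserving enough decay in $z$ to absorb the polynomial $z^\beta$ via the Stirling-type bound $\sup_z|z|^{|\beta|}e^{-r|z|^{1/t}}\lesssim h_1^{|\beta|}\beta!^t$ already derived in the proof of Proposition~\ref{Prop:ExpIdent1}. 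Choosing $h\ge \max(\tilde h, d^th_1)$ yields \eqref{Eq:KeyDecayPlan} with $r'=rc_t$. The Beurling version is obtained by the same calculation, observing that $h_0,h_\psi$ can be chosen arbitrarily small and $r_0,r_\psi$ arbitrarily large when $\phi_0,\psi\in\Sigma_t^s(\rr d)$.

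The principal technical obstacle is the combinatorial bookkeeping in the Leibniz expansion: one must simultaneously reproduce the $h^{|\alpha|}\alpha!^s$ envelope from the $\alpha$-derivatives, absorb the polynomial $z^\beta$ into the $h^{|\beta|}\beta!^t$ envelope, and preserve a separate factor $e^{-r'|y|^{1/t}}$ that is summable over the lattice $\Lambda_E$. The half-and-half allocation of the joint exponential decay via the power-mean inequality is the critical device that makes all three goals compatible; without this splitting one either loses the $y$-decay (breaking summability in (1) and convergence in (2)) or consumes the $z$-decay needed to extract the $\beta!^t$ factor governing polynomial growth.
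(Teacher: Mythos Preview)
Your argument is correct, and it reaches the same conclusion as the paper but by a genuinely different route. The paper does not work with the defining seminorms $\nm{\cdo}{\maclS_{t,h}^s}$ at all; instead it switches to the equivalent system of seminorms
\[
\phi \mapsto \nm{\phi\, e^{r|\cdot|^{1/t}}}{L^\infty} + \nm{\widehat\phi\, e^{r|\cdot|^{1/s}}}{L^\infty},
\]
and handles the Fourier side by observing that $\mascF\big(\phi_0(\cdo-x+k)\psi\big)(\eta)$ is exactly the short-time Fourier transform $(V_{\overline{\phi_0}}\psi)(x-k,\eta)$, so the required exponential decay in both $x-k$ and $\eta$ follows directly from Proposition~\ref{stftGelfand2}. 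This avoids the Leibniz expansion entirely. Your approach instead stays on the physical side, uses the derivative-plus-exponential-decay characterization of $\maclS_t^s$, and recovers the $\beta!^t$ envelope by the Stirling-type bound from the proof of Proposition~\ref{Prop:ExpIdent1}. The paper's route is shorter and exploits machinery already in place; yours is more self-contained and makes no appeal to the STFT characterization, at the cost of the combinatorial bookkeeping you describe. One small remark on your reduction of (1): in the Roumieu case the dual pairing is bounded by $\nm{\cdo}{\maclS_{t,h}^s}$ for \emph{every} $h>0$ (since $f$ restricts continuously to each step of the inductive limit), which is what guarantees compatibility with the particular $h$ produced by your key estimate; your phrase ``for some $h$'' is correct in effect but slightly understates what is available.
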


\par

\begin{proof}
We only prove the result in the Roumieu case, leaving the Beurling case for the reader.

\par

Let $\phi = \overline {\phi _0}$. We have 
$$
|\phi (y-x+k)| \lesssim e^{-r_{0} |y-x+k|^{\frac{1}{t}}}
\quad \text{and}\quad
|\psi(y)| \lesssim e^{-r_{0}|y|^{\frac{1}{t}}}
$$
for some $r_{0}>0$, which gives 
\begin{equation}\label{Eq:ExpEst1}
|\phi _0(y-x+k)\psi(y)|
\lesssim
e^{-r_{0} ( |y-x+k|^{\frac{1}{t}} + |y|^{\frac{1}{t}} ) }
\lesssim
e^{-cr_{0}(|x-k|^{\frac 1 t}+|y|^{\frac{1}{t}})}
\end{equation}
for some $c\in (0,1)$ which only depends on $t$. Moreover, 
\begin{multline}\label{Eq:ExpEst2}
|\mascF (\phi _0(\cdo -x+k)\psi (y))(\eta)| = |(V_\phi \psi )(x-k,\eta )|
\\[1ex]
\lesssim
e^{-r_{0} ( |x-k|^{\frac 1t} + |\eta|^{\frac 1s} )},
\end{multline}
in view of Proposition \ref{stftGelfand2}.

\par

Since the topology of $\maclS _t^s(\rr d)$ can be obtained through the
semi-norms
$$
\phi \mapsto \nm {\phi e^{r|\cdo |^{\frac 1t}}}{L^\infty}
+
\nm {\widehat \phi e^{r|\cdo |^{\frac 1s}}}{L^\infty},
\qquad r>0,
$$
\eqref{Eq:ExpEst1} and \eqref{Eq:ExpEst2} give
\begin{multline*}
|\scal {f}{\phi_0(\cdo -x+k)\psi}|
\\[1ex]
\lesssim \nm {\phi_0 (\cdo -x+k)\psi e^{cr_0 |\cdo|^{\frac 1 t}/2}}{L^{\infty}}
+\nm {\mathcal{F}{(\phi_0(\cdo -x+k))e^{cr_0|\cdo|^{{\frac 1 s}}/2}}}{L^{\infty}}
\\[1ex]
\lesssim e^{-cr_0|x-k|^{\frac 1t}}.
\end{multline*}
Hence,
%By summing up over all $k \in \Lambda _{E}$, we obtain 
$$
\sup _{x\in \rr d}\left ( \sum _{k\in \Lambda _E} |\scal {f} {\phi_{0}(\cdo -x+k)\psi}| \right )
\lesssim
\sup _{x\in \rr d} \left (\sum _{k\in \Lambda _E} e^{-cr_0|x-k|^{\frac 1t}} \right )< \infty ,
%\leq C,
$$ 
%for some constant $C$ which is independent of $x\in \rd$.
%This gives (1).
and (1) follows.

\par

(2) It is clear that $\psi_{0}=\sum _{k\in\Lambda_{E}} \phi_0 (\cdo +k)\psi$ 
is well-defined and smooth. Let
$$
\Lambda _{E,N}=\sets{k\in \Lambda _{E}}{|k|<N},
\qquad
\Omega _{E,N}=\Lambda_E\setminus \Lambda_{E,N},
$$
and let 
$$
\psi _N =\psi_0 -\sum _{k\in\Lambda _{E,N}} \phi _0(\cdo+k)\psi .
$$
We shall prove that $\psi _N \to 0$ in $\maclS_t^s(\rd )$, as $N$ tends to $\infty$. 

\par

For some $r_0>0$ we have 
$$
|\psi_N |e^{r_0 |\cdo|^{{\frac 1 t}}/2} \leq \sum _{k\in \Omega _{E,N}}
|\phi_0(\cdo +k)||\psi|e^{r_0 |\cdo|^{{\frac 1 t}}/2}
$$
which tends to $0$ as $N$ tends to $\infty$. Moreover,
\begin{equation*}
|\widehat{\psi}_N(\eta)|\lesssim
\sum _{k\in \Omega _{E,N}} |V_\phi \psi(-k,\eta)|
\lesssim \sum _{k\in \Omega _{E,N}} e^{-r_0(|k|^{\frac 1 t}+|\eta|^{\frac 1 s})},
\end{equation*}
by Proposition \ref{stftGelfand2}. Hence
\begin{equation*}
|\widehat{\psi}_N(\eta)e^{r_0 |\eta|^{{\frac 1 s}}/2}|\lesssim
\sum _{k\in \Omega _{E,N}}e^{-r_0(|k|^{\frac 1 t}+|\eta|^{\frac 1 s})/2}
%\lesssim e^{-r_0|\eta|^{\frac 1 s}/2}\sum _{k\in \Omega _{E,N}} 
%e^{-r_0|k|^{\frac 1 t}/2}\to 0
\end{equation*}
which tends to $0$ as $N$ tends to $\infty$. %when $N\to \infty$. Hence 
Consequently,
$$
%\underset{N\to \infty}\lim
\lim _{N\to \infty}
\left(\nm {\psi_N e^{r_0|\cdo|^{\frac 1 t}}}{L^{\infty}} +
\nm {\widehat{\psi}_N e^{r_0|\cdo|^{\frac 1 s}}}{L^{\infty}}\right) =0,
$$
for some $r_0>0$, which shows that $\psi_N\to 0$ in $\maclS _t^s(\rd)$ as
$N\to \infty$ and the result follows. 
\end{proof}

\par

\begin{proof}[Proof of Proposition \ref{Prop:Equality}]
We only prove the result in the Roumieu case. The Beurling case follows
by similar arguments and is left for the reader.

\par

(1) We have 
$$
| \xi+\eta | ^{\frac{1}{s}}\leq c (| \xi | ^{\frac{1}{s}} + | \eta | ^{\frac{1}{s}}),
\quad
| \widehat{\phi}(\xi)| \lesssim e^{-r_{0}| \xi |^{{\frac{1}{s}}}}
\quad \text{and}\quad
| V_\phi f(x,\xi)| \lesssim e^{r| \xi |^{{\frac{1}{s}}}},
$$
for some $c\ge 1$ which only depends on $s$, for some $r_{0}>0$,
and for every $r>0$.
By choosing $r<(c+1)^{-1}r_0$ we get 
\begin{multline*}
\vert c(f,\alpha) | \lesssim \int _{E}
\left (
\int_{\rd} |V_\phi f(x,\xi)| 
|\widehat{\phi}(\alpha - \xi)|\, d\xi 
\right )
\, dx
\\[1ex]
\lesssim 
\int_{E}
\left (
\int_{\rd} e^{r|\xi|^{\frac{1}{s}}} e^{-r_{0}|\alpha -\xi|^{\frac{1}{s}}}
\, d\xi
\right )
\, dx
\\[1ex]
\asymp 
\int_{\rd} e^{r(|\xi|^{\frac{1}{s}} - c|\alpha - \xi|^{\frac{1}{s}})}
e^{-r|\alpha - \xi|^{\frac{1}{s}}}\, d\xi
\\[1ex]
\leq 
e^{rc|\alpha|^{\frac{1}{s}}}\int_{\rd} e^{-r|\alpha - \xi|^{\frac{1}{s}}}\, d\xi 
%\\[1ex]
\asymp 
e^{rc|\alpha|^{\frac{1}{s}}}.
\end{multline*}
Since $rc>0$ can be made arbitrary close to $0$ we get $|c(f,\alpha)|\lesssim
e^{r|\alpha|^{\frac{1}{s}}}$ for every $r>0$, and (1) follows.
%%%%%%%%%%%%%%

\par

(2) Let $f_0=\Fo{E}(f)$, $\phi \in \maclS _{t}^{s}(\rd )\setminus 0$ and
let $\psi \in \maclS_{t}^{s}(\rd)$. Then
\begin{multline}\label{Eq:FourCoeffEst1}
(2\pi )^{-\frac d2}
\nm \phi{L^2(\rd)} |E| \scal {f_{0}}\psi = \sum _{\alpha \in \Lambda _{E}'}
c(\alpha,f) \widehat{\psi}(-\alpha)
\\[1ex]
=\sum _{\alpha \in \Lambda _{E}'} \int _{E}
\left (\int  _{\rd}
(V_{\phi}f)(x,\xi) \widehat{\phi}(\alpha - \xi)
e^{-i \scal x{\alpha-\xi}}\, d\xi\widehat {\psi}(-\alpha)
\right ) \, dx
\\[1ex]
=\int_{E}\left ( \int _{\rd} (V_{\phi}f)(x,\xi) F(x,\xi)\, d\xi
 \right )\, dx,
\end{multline}
where
$$
F(x,\xi)= \sum _{\alpha\in\Lambda _{E}'} \widehat{\phi}(\alpha-\xi)
\widehat{\psi}(-\alpha) e^{-i \scal x{\alpha-\xi}}.
$$
In the last equality in \eqref{Eq:FourCoeffEst1} we have used the fact
\begin{equation*}
\sum _{\alpha\in\Lambda _{E}'} \int _{E} \left(\int _{\rd} 
|(V_{\phi}f)(x,\xi )| |\widehat{\phi} (\alpha - \xi)|\, d\xi \right) \, dx |\widehat {\phi}(-\alpha)|
<\infty,
\end{equation*}
which implies that we may interchange orders of summations and integrations.

\par

We shall rewrite $F(x,\xi)$. By straight-forward computations we get 
$$
\widehat{\phi}(\alpha -\xi ) \widehat{\phi}(-\alpha)
=
(2\pi)^{-\frac{d}{2}}
\mascF ((\phi e^{i \scal \cdo \xi }) * \check \psi )(\alpha ),
$$
and Poisson's summation formula gives
\begin{multline}\label{Eq:FourCoeffEst2}
F(x,\xi)
=
(2\pi)^{-\frac{d}{2}}e^{i\scal x\xi} 
\sum _{\alpha \in \Lambda _{E}'}
\mascF ((\phi e^{i\scal \cdo \xi}) * \check \psi )(\alpha)
e^{-i\scal x\alpha}
\\[1ex]
=
(2\pi)^{-\frac{d}{2}} |E| e^{i\scal x\xi}
\sum _{k\in \Lambda _{E}}
((\phi e^{i\scal \cdo \xi}) * \check \psi)(k - x)
\\[1ex]
=
(2\pi)^{-\frac{d}{2}} |E| \sum _{k \in \Lambda _{E}}
\overline{V_{\phi}(\overline{\psi(\cdo - k)})(x,\xi)}.
\end{multline}
A combination of \eqref{Eq:FourCoeffEst1} and \eqref{Eq:FourCoeffEst2}
leads to
\begin{equation*}
\nm \phi{L^2(\rd)}^{2}\scal {f_{0}}\psi 
=
\left ( \sum _{k \in \Lambda _E} \int _{\rd} V_{\phi}f(x,\xi)
\overline{(V_{\phi}(\overline{\psi(\cdo - k)}))(x,\xi)}\, d\xi
\right )
\, dx.
\end{equation*}
By Fourier inversion Formula we get 
$$
\int _{\rd} (V_\phi f) (x,\xi)\overline{(V_\phi (\overline{\psi (\cdo - k)})(x,\xi)}
\, d\xi = \scal f {|\phi(\cdo - x+k)|^2 \psi}.
$$
Hence 
\begin{equation*}
\nm \phi{L^2(\rd)}^{2} \scal {f_0} \psi
=
\int _{E} 
\sum _{k \in \Lambda _E} \scal f{|\phi(\cdo-x+k)|^2\psi }\, dx,
\end{equation*}
and we shall use Lemma \ref{Lem:GSTensors} and \ref{Lem:EstGSSums}
to reformulate the right-hand side.

\par

By Lemma \ref{Lem:EstGSSums} (1) and Lebesgue's theorem we have
$$
\int _{E} 
\sum _{k \in \Lambda _E} \scal f{|\phi(\cdo-x+k)|^2\psi }\, dx
=
\sum _{k \in \Lambda _E} \int _{E} \scal f{|\phi(\cdo-x+k)|^2\psi }\, dx,
$$
and letting
$$
\psi _k(x,y)=|\phi (y-x+k)|^2\psi (y)\in \maclS _t^s(\rr {2d}),
$$
it follows from Lemma \ref{Lem:GSTensors} that
$$
\int _{E} \scal f{|\phi(\cdo-x+k)|^2\psi }\, dx = \scal {\chi _E\otimes f}{\psi _k}
= \scal f{\phi _0(\cdo +k)\psi},
$$
where
$$
\phi _0 (y) = \int _{E}|\phi (y-x)|^2\, dx \in \maclS _t^s(\rr d).
$$
Here $\chi _E$ is the characteristic function of $E$.

\par

Let $\Lambda _{E,N}$ be the same as in Lemma \ref{Lem:EstGSSums}.
A combining the identities above and Lemma \ref{Lem:EstGSSums}
gives
\begin{multline*}
\nm \phi{L^2(\rd)}^{2}\scal {f_{0}}\psi 
=
\lim _{N\to \infty}\sum _{k \in \Lambda _{E,N}} \scal f{\phi _0(\cdo +k)\psi}
\\[1ex]
=
\lim _{N\to \infty}
\left \langle
f ,
\left ( \sum _{k \in \Lambda _{E,N}}
\phi (\cdo +k) \right )\psi
\right  \rangle
=
\left \langle
f ,
\left ( \sum _{k \in \Lambda _{E}}
\phi (\cdo +k) \right )\psi
\right  \rangle
\\[1ex]
=
\nm \phi{L^2(\rd)}^{2} \scal {f} \psi ,
\end{multline*}
and the result, and thereby Theorem \ref{Thm:Equality} follow.
\end{proof}

\par

\begin{proof}[Proof of Theorem \ref{Prop:PerGSDistSTFT2}]
The assertion (1) and one part of (2) are immediate consequences of
Propositions \ref{Prop:PerGSDistSTFT} and \ref{Prop:PerGevrSTFT}
and Theorem \ref{Thm:Equality}. We need to show that \eqref{Eq:STFT}
for some $r>0$ (every $r>0$) 
is sufficient that $f\in \maclE_s^{E}(\rd)$ ($f\in \maclE_{0,s}^{E}(\rr d)$).

\par

We only consider the Roumieu case. The Beurling case follows
by similar arguments and is left for the reader.

\par

Suppose that $|V_\phi f(x,\xi )|
\lesssim e^{-2r|\xi|^{\frac{1}{s}}}$ for some $r>0$. Then $f\in(\maclE ^{E}_s)'(\rd)$
due to (1), and hence has a Fourier series expansion with coefficients $c(f,\alpha)$.
By Remark \ref{Rem:FourCoeff} and the fact that $\phi\in \maclS^s_t(\rd)$ we have 
\begin{multline*}
|c(f,\alpha)|=|(f,e^{i\scal \cdo \alpha})_{E}|
\\[1ex]
\leq 
(\nm \phi {L^2(\rd)}^2 |E|)^{-1} \int _{E}\left(
\int _{\rd} |V_\phi f(x,\xi)| |\widehat{\phi}(\alpha - \xi)| \, d\xi
\right)\, dx
\\[1ex]
\lesssim \int _{\rd} e^{-r|\xi |^{\frac{1}{s}}} e^{-r(|\xi|^{\frac{1}{s}}
+|\alpha - \xi|^{\frac{1}{s}})} \, d\xi
\\[1ex]
\lesssim e^{-rc|\alpha |^{\frac{1}{s}}}\int _{\rd} e^{-r|\xi |^{\frac{1}{s}}} \, d\xi
\asymp e^{-rc | \alpha |^{\frac{1}{s}}},
\end{multline*}
for some $c>0$ which only depends on $s>0$. This implies that $f\in\maclG_s^{E}(\rd)
=\maclE_s^{E}(\rd)$, and the result follows.
\end{proof}

\par

\begin{rem}
Evidently Theorem \ref{Thm:Equality} (1) is true also when $\phi \in \mascS (\rr d)$
and $f\in (\maclE _s^{E})'(\rr d)\bigcap \mascS '(\rr d)$.

It also follows from the proof of
Theorem \ref{Prop:STFTDescrForm} that the conclusions of that theorem
is also true when in the case when $f\in (\maclE _s^{E})'(\rr d)\bigcap \mascS '(\rr d)$,
$\psi \in (\maclE _s^{E})'(\rr d)\bigcap C^\infty (\rr d)$
and $\phi \in \mascS (\rr d)$. The details are left for the reader.
\end{rem}

\par

\begin{rem}
Let $f$ be a Gelfand-Shilov distribution on $\rr d$ and
let $\phi \in \mascS (\rd )\setminus 0$. Then the following conditions
are equivalent:
\begin{enumerate}
\item $f\in \mascS (\rr d)$;

\vrum

\item $|f(x)|\lesssim \eabs x^{-N}$ and $|\widehat f(\xi )|\lesssim
\eabs \xi ^{-N}$ for every $N\ge 0$;

\vrum

\item $|V_\phi f(x,\xi )|\lesssim \eabs {(x,\xi )}^{-N}$ for every $N\ge 0$.
\end{enumerate}
We also have that $f\in \mascS '(\rr d)$ if and only if
$$
|V_\phi f(x,\xi )|\lesssim \eabs {(x,\xi )}^{N}
$$
for some $N\ge 0$ (cf. e.{\,}g. \cite{ChuChuKim, Gc2} or Remark 1.3 in
\cite{Toft18}).

\par

Now assume that $f$ is an $E$-periodic Gelfand-Shilov distribution on $\rr d$.
From the previous characterizations it follows by similar arguments
as for the proof of Theorem \ref{Prop:PerGSDistSTFT2} that the
following is true:
\begin{enumerate}
\item $f\in \mascS '(\rd )$ if and only if
\begin{equation*}
|V_{\phi}f (x,\xi)| \lesssim \eabs \xi ^{N}
\end{equation*}
for some $N\geq 0$.

\vrum

\item $f\in C^{\infty} (\rd)$ if and only if
\begin{equation*}
|V_{\phi}f (x,\xi)| \lesssim \eabs \xi ^{-N}
\end{equation*}
for every $N\geq 0$.
\end{enumerate}
\end{rem}

\par

\begin{example}\label{Ex:WellPosedHeatEq}
Suppose $E\subseteq \rr d$ is a rectangle, and that $f(t,x)$, $(t,x)\in \mathbf R \times E$
satisfies the heat equation
\begin{equation}\label{Eq:HeatProblem}
\begin{aligned}
\partial _tf &= \Delta _xf,\qquad f(0,x) = f_0(x),\quad
\text{where}
\\[1ex]
f_0(x) &= \sum _{\alpha \in \Lambda _E'} c(f_0,\alpha )e^{i\scal x\alpha}
\end{aligned}
\end{equation}
is a fixed element in $(\maclE _0^E)'(\rr d)$. We are interested to find
well-posedness properties in the framework of the spaces
$\maclE _{s}^E(\rr d)$ and
$(\maclE _{s}^E)'(\rr d)$ when $s\ge 0$, and the spaces $\maclE _{0,s}^E(\rr d)$ and
$(\maclE _{0,s}^E)'(\rr d)$ when $s>0$.

\medspace

The formal solution is given by
$$
f(t,x) = \sum _{\alpha \in \Lambda _E'} c(\alpha )e^{-|\alpha |^2t}
e^{i\scal x\alpha}.
$$
By Theorem \ref{Thm:Equality} it follows that the following is true:
\begin{enumerate}
%\item If $s< \frac 12$ and $f_0\in \maclE _s^E(\rr d)$, then the map
%$t\mapsto f(t,\cdo )$ is a smooth map from $\mathbf R$ to $\maclE _s^E(\rr d)$.
%The same holds true with $(\maclE _s^E)'(\rr d)$ in place of $\maclE _s^E(\rr d)$;
%
%\vrum
%
\item If $0\le s< \frac 12$, then the map
$(t,f_0)\mapsto f(t,\cdo )$ is continuous from $\mathbf R\times \maclE _{s}^E(\rr d)$
to  $\maclE _{s}^E(\rr d)$. Moreover, if $f_0\in \maclE _{s}^E(\rr d)$, then
$t\mapsto f(t,\cdo )$ is smooth from $\mathbf R$ to $\maclE _{s}^E(\rr d)$.
The same holds true with $(\maclE _{s}^E)'(\rr d)$ in place of $\maclE _{s}^E(\rr d)$
at each occurrence;

\vrum

\item If $0<s\le \frac 12$, then the map
$(t,f_0)\mapsto f(t,\cdo )$ is continuous from $\mathbf R\times \maclE _{0,s}^E(\rr d)$
to  $\maclE _{0,s}^E(\rr d)$. Moreover, if $f_0\in \maclE _{0,s}^E(\rr d)$, then
$t\mapsto f(t,\cdo )$ is smooth from $\mathbf R$ to $\maclE _{0,s}^E(\rr d)$.
The same holds true with $(\maclE _{0,s}^E)'(\rr d)$ in place of $\maclE _{0,s}^E(\rr d)$
at each occurrence;

\item If $s= \frac 12$ and $f_0\in (\maclE _s^E)'(\rr d)$, then
$f(t,\cdo )\in \maclE _s^E(\rr d)$ when $t>0$, and
$f(t,\cdo )\in (\maclE _{0,s}^E)'(\rr d)$ when $t<0$.
%$C^\infty (\mathbf R_-; (\maclE _{0,s}^E)'(\rr d))$.
\end{enumerate}
\end{example}

\par

%%%%%%%%%%%%%%%%%%%%%%%
\section{Periodic elements in modulation spaces}\label{sec3}
%%%%%%%%%%%%%%%%%%%%%%%

\par

In this section we show that $E$-periodic elements in the modulation
spaces $M^{\infty}(\omega,\mascB )$ and $W^{\infty}(\omega,\mascB )$
agree with $\maclE^{E}(\omega,\mascB )$ in Definition
\ref{Def:PerQuasiBanachSpaces},
for suitable $\omega$ and $\mascB$.

\par

More precisely we have following extension of \cite[Proposition 2.6]{Re}.

\par

\begin{thm}\label{Thm:PerMod}
Let $E\subseteq \rr d$ be a non-degenerate parallelepiped, $\mathcal O_0$
be as in Definition \ref{Def:MixedLebSpaces}, $\mabfq \in (0,\infty ]^d$,
$\tau \in \operatorname{S_d}$, and let $\omega\in \mascP _E(\rr d)$. Also let
$$
\mascB = L^{\mabfq }_{\mathcal O_0,\tau}(\rr d)
\quad \text{and}\quad
\mascB _0 = \ell ^{\mabfq }_{\mathcal O_0,\tau}(\Lambda _E')
$$
Then
\begin{equation*}
\maclE^{E}(\omega, \mascB _0)
=M^{\infty}(\omega, \mascB )\bigcap (\maclE_{0}^E)'(\rr d)
=W^{\infty}(\omega,\mascB )\bigcap (\maclE_{0}^E)'(\rd).
\end{equation*}
\end{thm}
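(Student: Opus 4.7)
The plan is to exploit the explicit expansion
$$
V_\phi f(x,\xi ) = \sum _{\alpha \in \Lambda _E'} c(f,\alpha )\,\overline{\widehat\phi (\alpha -\xi )}\, e^{i\scal x {\alpha -\xi }},
$$
which holds for every $f\in (\maclE _0^E)'(\rr d)$ given by \eqref{Eq:Expan2} and already appeared in the opening computation of the proof of Theorem \ref{Prop:STFTDescrForm}. Using $\scal k\alpha \in 2\pi \mathbf Z$ for $k\in \Lambda _E$, $\alpha \in \Lambda _E'$, it follows that $|V_\phi f(\cdo ,\xi )|$ is $\Lambda _E$-periodic, so the essential supremum in $x$ over $\rr d$ equals that over $E$. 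I would fix throughout a window $\phi \in \maclS _{1/2}(\rr d)\setminus 0$ with $\widehat\phi (0)\neq 0$ (e.g.\ a Gaussian); this is permissible since different such windows produce equivalent quasi-norms on both $M^\infty (\omega ,\mascB )$ and $W^\infty (\omega ,\mascB )$.

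For the inclusion $\maclE ^E(\omega ,\mascB _0)\subseteq M^\infty (\omega ,\mascB )\cap (\maclE _0^E)'(\rr d)$, the expansion above together with the moderateness $\omega (\xi )\lesssim \omega (\alpha )v(\xi -\alpha )$ yields
$$
\esssup _{x}|V_\phi f(x,\xi )|\,\omega (\xi )\le \sum _{\alpha \in \Lambda _E'}|c(f,\alpha )|\,\omega (\alpha )\, \Psi (\xi -\alpha ),
$$
where $\Psi (\eta )=|\widehat\phi (-\eta )|\,v(\eta )$ decays faster than any polynomial since $\widehat\phi $ has super-exponential decay while $v$ grows at most exponentially; an analogous bound holds for $\nm {V_\phi f(x,\cdo )\omega}{\mascB}$ uniformly in $x$. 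Both inclusions then reduce to a discrete--continuous Young-type inequality
$$
\Nm{\sum _{\alpha \in \Lambda _E'} a(\alpha )\Psi (\cdo -\alpha )}{L^{\mabfq }_{\mathcal O_0,\tau}(\rr d)}\lesssim \nm a{\ell ^{\mabfq }_{\mathcal O_0,\tau}(\Lambda _E')},
$$
which I would establish by iterating a one-dimensional discrete--continuous convolution estimate in each coordinate, exploiting the fact that the ordered basis $\mathcal O_0=(e_1',\dots ,e_d')$ defining both mixed norms is the very basis generating $\Lambda _E'$.

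For the reverse inclusion $W^\infty (\omega ,\mascB )\cap (\maclE _0^E)'(\rr d)\subseteq \maclE ^E(\omega ,\mascB _0)$, Fourier inversion in $x$ applied to the expansion gives the extraction formula
$$
c(f,\alpha )\overline{\widehat\phi (\alpha -\xi )}=|E|^{-1}\int _E V_\phi f(x,\xi )\, e^{-i\scal x{\alpha -\xi }}\, dx .
$$
Choosing a small neighbourhood $U$ of $0$ inside a fundamental cell of $\Lambda _E'$ on which $|\widehat\phi |$ is bounded below, and using $\omega (\alpha )\asymp \omega (\xi )$ for $\xi \in \alpha +U$, this yields $|c(f,\alpha )|\omega (\alpha )\lesssim G(\alpha +\xi ^*)$ for every $\xi ^*\in U$, where $G(\xi )=\esssup _x|V_\phi f(x,\xi )|\,\omega (\xi )$. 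Taking $L^{\mabfq }_{\mathcal O_0,\tau}(U)$-norm in $\xi ^*$, then $\ell ^{\mabfq }_{\mathcal O_0,\tau}$-norm in $\alpha $, and invoking the natural tile identity
$$
\Nm G{L^{\mabfq }_{\mathcal O_0,\tau}(\rr d)}\asymp \Nm{\{\nm G{L^{\mabfq }_{\mathcal O_0,\tau}(\alpha +U)}\}_\alpha }{\ell ^{\mabfq }_{\mathcal O_0,\tau}(\Lambda _E')}
$$
(which reflects the same basis--lattice compatibility), this produces $\nm {\{c(f,\alpha )\omega (\alpha )\}}{\ell ^{\mabfq }_{\mathcal O_0,\tau}}\lesssim \nm f{M^\infty (\omega ,\mascB )}\le \nm f{W^\infty (\omega ,\mascB )}$, using the known embedding $M^\infty \hookrightarrow W^\infty $ recorded in Section \ref{sec1}.

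The hard part is the mixed-norm Young-type inequality, particularly when components of $\mabfq $ lie in $(0,1)$, where the triangle inequality must be replaced by $\ell ^r$-summation with $r=\min (1,\mabfq )$ and the rapid decay of $\Psi $ combined with the tile structure of $\Lambda _E'$ must be tracked carefully. The hypothesis that $\mascB $ and $\mascB _0$ are formed from the same ordered basis $\mathcal O_0$ and permutation $\tau $ is precisely what allows these estimates to close.
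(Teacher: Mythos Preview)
Your forward inclusion $\maclE ^E(\omega ,\mascB _0)\subseteq M^\infty (\omega ,\mascB )\cap (\maclE _0^E)'(\rr d)$ is essentially the paper's argument, and the extraction formula you use for the reverse direction is natural. However, the last displayed inequality in your reverse inclusion is backwards. The embedding $M^\infty (\omega ,\mascB )\hookrightarrow W^\infty (\omega ,\mascB )$ recorded in Section \ref{sec1} means $\nm f{W^\infty (\omega ,\mascB )}\lesssim \nm f{M^\infty (\omega ,\mascB )}$, not the other way round. Your function $G(\xi )=\esssup _x|V_\phi f(x,\xi )|\,\omega (\xi )$ satisfies $\nm G{\mascB }=\nm f{M^\infty (\omega ,\mascB )}$, so what your tile argument actually yields is
$$
\nm {\{c(f,\alpha )\omega (\alpha )\}}{\ell ^{\mabfq }_{\mathcal O_0,\tau}}\lesssim \nm f{M^\infty (\omega ,\mascB )}.
$$
Combined with your forward inclusion this proves $\maclE ^E(\omega ,\mascB _0)=M^\infty (\omega ,\mascB )\cap (\maclE _0^E)'(\rr d)$, but you have said nothing about the genuinely larger space $W^\infty (\omega ,\mascB )\cap (\maclE _0^E)'(\rr d)$. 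The implication $W^\infty \cap (\maclE _0^E)'\subseteq \maclE ^E$ is not a formality: it is precisely the content that the two a priori different modulation spaces collapse on periodic elements.

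The paper closes this gap by a different mechanism. It invokes a Gabor frame expansion (from \cite{Toft15}) so that the $W^\infty$-norm is realised discretely as $\sup _{j\in \Lambda _E}\nm {(V_\psi f)(j,\cdo )\omega }{\mascB _1}$ on a lattice $\Lambda _E\times \Lambda$. Periodicity gives $(V_\psi f)(j,\iota )=e^{-i\scal j\iota }(V_\psi f)(0,\iota )$ for $j\in \Lambda _E$, so this supremum equals the single term $\nm {(V_\psi f)(0,\cdo )\omega }{\mascB _1}$; from there a discrete convolution estimate (using Remark \ref{Rem:FourCoeff} for $c(f,\alpha )$) produces the bound by $\nm f{W^\infty (\omega ,\mascB )}$. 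Your extraction formula, being an integral over $x\in E$, inevitably brings in $\esssup _x$ and hence the $M^\infty$-norm; to reach $W^\infty$ you need a device that lets a single $x$ control the Fourier coefficients, which is what the frame plus periodicity identity supplies.
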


\par

We note that compactly supported as well as periodic elements in modulation spaces
have been investigated in different contexts. For example, Theorem \ref{Thm:PerMod}
is related to \cite[Proposition 5.1]{RuSuToTo}.

\par

\begin{proof}
%Let $r\in(0,1]$ be such that $\mascB \subseteq L_{loc}^{r}(\rd)$.
It is clear from
the definitions that $M^{\infty}(\omega,\mascB )
\subseteq W^{\infty}(\omega,\mascB )$ (see also \cite{Toft15}). Hence it
sufficies to prove
\begin{equation}\label{Embeddings}
W^{\infty}(\omega,\mascB )\bigcap (\maclE_{0}^E)'(\rd)\subseteq 
\maclE^{E}(\omega,\mascB _0)
\subseteq
M^{\infty}(\omega,\mascB )\bigcap (\maclE_{0}^E)'(\rd).
\end{equation}

\par

For every $f\in W^\infty (\omega ,\mascB  ) \bigcap (\maclE_{0}^E)'(\rr d)$, we have
%$f\in\Omega'_1(\rd)$, which implies that
%$f\in M^{\infty}_{(1/v)}(\rd)$ for 
%some submultiplicative $v$. In particular,
%$f$ has the expansion
%%
\begin{align*}
f &= \sum _{\iota\in \Lambda} \sum _{j\in \Lambda _E} (V_{\psi}f)(j,\iota) \phi(\cdo -j)
e^{i\scal \cdo \iota},
\intertext{and}
\nm f{W^\infty (\omega ,\mascB  )}
&\asymp \sup _{j\in \Lambda _E} \nm {(V_{\psi}f)(j,\cdo )}{\mascB _1},
\quad 
\mascB _1 = \ell ^{\mabfq }_{\mathcal O_0,\tau}(\Lambda )
\end{align*}
for some $\phi \in \Sigma_1(\rd)\setminus 0$, $\psi\in M^{r}_{(v)}(\rd)\setminus 0$
and sufficiently dense lattice $\Lambda \subseteq \rd$, which are fixed (cf.
\cite[Theorem 3.7]{Toft15}). We may
assume that $\Lambda _E'\subseteq \Lambda$.

\par

By using the $E$-periodicity of
$f$, we get by straight-forward computations that
$$
(V_{\psi}f)(j,\iota) =e^{-i\scal j\iota}(V_{\psi}f)(0,\iota)
$$
which gives 
\begin{equation*}
(V_{\psi}f)(x,\xi)= \sum _{\iota \in \Lambda}\sum _{j\in \Lambda_E} (V_{\psi}f)(0,\iota)
(V_{\psi} \phi)(x-j,\xi-\iota)e^{-i\scal j\xi}.
\end{equation*}
In particular, by Proposition \ref{stftGelfand2} we get
\begin{equation*}
|V_{\psi}f(x,\xi)|\lesssim \sum _{\iota \in \Lambda}\sum _{j\in \Lambda _E} |(V_{\psi}f)(0,\iota)|
e^{-R(|x-j| + |\xi -\iota |)},
\end{equation*}
for every $R>0$, giving that
$$
|(V_{\psi}f)(x,\xi )|
\lesssim \sum _{\iota\in \Lambda} |(V_{\psi}f)(0,\iota)| e^{-R|\xi-\iota |}
$$
for every $R>0$. By \eqref{Eq:GSFtransfChar} and
Remark \ref{Rem:FourCoeff} we get 
\begin{multline*}
|c(f,\alpha)| \lesssim \int _E
\left (
\int _{\rd} |(V_{\psi}f)(x,\xi )| |\widehat {\phi}(\alpha -\xi )| \, d\xi 
\right ) \, dx
\\[1ex]
\lesssim \sum _{\iota\in \Lambda} |(V_{\psi}f)(0,\iota )| \int _{\rd} e^{-R|\xi-\iota |}
|\widehat \phi (\alpha - \xi)| \, d\xi
%\\[1ex]
\lesssim \sum _{\iota\in \Lambda} |(V_{\psi}f)(0,\iota )| e^{-R|\alpha -\iota |}
\end{multline*}
for every $R>0$. By choosing $R$ large enough it follows that $v(x)\lesssim
e^{R|x|/2}$. For such $R$ we have 
\begin{multline}\label{Eq:DiscConvEst}
\nm f{\maclE^{E}(\omega,\mascB _0)} 
=
\nm { \{ c(f,\alpha) \omega (\alpha) \} _{\alpha \in \Lambda '_{E}}} {\mascB _0}
\\[1ex]
\lesssim
\nm {|(V_{\psi}f)(0,\cdo )\omega | * (e^{-R|\cdo |} v)} {\mascB _0},
\\[1ex]
\le
\nm {|(V_{\psi}f)(0,\cdo )\omega | * (e^{-R|\cdo |} v)}{\mascB _1}
\\[1ex]
\le
\nm {(V_{\psi}f)(0,\cdo ) \omega}
{\mascB _1} \nm {e^{-R|\cdo |} v}{\ell ^{r}(\Lambda )}
%\\[1ex]
\asymp \nm {(V_{\psi}f)(0,\cdo )\omega} {\mascB _1}
\\[1ex]
=\sup _{j\in \Lambda _E} \nm {(V_{\psi}f)(j,\cdo )\omega} {\mascB _1}
\asymp \nm f {W^{\infty}(\omega,\mascB )}.
\end{multline}
Here $*$ is the discrete convolution with respect to $\Lambda$, and
the second inequality follows from the fact that $\Lambda _E'\subseteq
\Lambda$. This gives the first embedding in \eqref{Embeddings}.
%
%\par
%
%We now extend the definition of
%$\iota \mapsto c_0 (\iota )\equiv (V_{\psi}f)(0,\iota )$ on $\Lambda _E'$
%into a function on $\Lambda$, by letting $c_0(\iota )=0$ when 
%$\iota \in \Lambda \setminus \Lambda _E$.
%Then we still have that \eqref{Eq:DiscConvEst} holds and Young's inequality
%applied to the  right-hand side gives 
%%%
%\begin{multline*}
%\nm f {\maclE ^E (\omega, \mascB _0)} \lesssim \nm {c_0 \omega}
%{\mascB_0} \nm {e^{-R|\cdo |} v}{\ell ^{r}}
%\\[1ex]
%\asymp \nm {c_0\omega} {\mascB _0}
%=\sup _{j\in \Lambda _E} \nm {c(f,j,\cdo)\omega} {\mascB _0}
%\asymp \nm f {W^{\infty}(\omega,\mascB )},
%\end{multline*}
%%%
%and the first embedding in \eqref{Embeddings} follows.

\par

In order to prove the second embedding in \eqref{Embeddings} we observe that 
\begin{equation*}
(V_{\phi} f)(x,\xi) = \sum _ {\alpha\in \Lambda' _E} c(f,\alpha) 
\overline{\widehat{\phi}(\alpha -\xi )}e^{i \scal x{\alpha - \xi}} .
\end{equation*}
Let
\begin{align*}
f_0 (\beta) &= \sup _{x\in \rd} \left (\sup _{\xi\in \beta +E '} |V_{\phi}f(x,\xi)|\right ),
\quad \beta\in \Lambda'_E
\intertext{and}
\kappa (\beta) &= \sup _{\xi\in \beta +E '}|\widehat{\phi}(\xi)| ,
\quad \beta\in \Lambda ' _E.
\end{align*}
Then $\nm f {M^{\infty}(\omega,\mascB )} \asymp \nm {f_0\cdo \omega} {\mascB _0}$, 
and for every $R>0$ we have 
\begin{multline*}
\nm f {M^{\infty}(\omega,\mascB)}
\lesssim 
\Nm {\sum _{\alpha\in \Lambda _E'} |c(f,\alpha)| |\kappa (\alpha -\cdo )\omega |} {\mascB_0}
\\[1ex]
\lesssim
\nm {|c(f,\alpha) \omega | * (e^{-R|\cdo|}v)}{\mascB _0}
\\[1ex]
\leq
\nm {c(f,\cdo )\omega } {\mascB _0} 
\nm {e^{-r|\cdo | }v}{L ^{\min \{1,r \} }}
\asymp
\nm {|c(f,\cdo )\omega |} {\mascB _0} .
\end{multline*}
This gives the result.
\end{proof}

\par

\subsection{Duality properties of $M^\infty (\omega ,\mascB)
\bigcap (\mascE _0^E)'(\rr d)$}

\par

%Suppose $E\subseteq \rr d$ is a non-degenerate parallelepiped,
%spanned by $e_1,\dots ,e_d\in \rr d$, $\omega$ is a weight on $\rr d$
%$$
%\mabfq =(q_1,\dots , q_d)\in (0,\infty ]^d,
%$$
%$\operatorname {S}_d$ be the set of permutations on $\{ 1,\dots ,d\}$,
%and let $f\in \ell ^\infty _{loc}(\Lambda _E)$. As in \cite{Toft10}, let
%$$
%\nm f{\ell ^{\mabfq}_{\sigma ,(\omega )}} \equiv
%\nm {g_{d-1,\omega }}{\ell ^{q_d}(\mathbf Ze_d)},
%$$
%where $g_{d-1,\omega }$ is defined by the formulae
%%%
%\begin{align*}
%g_{0,\omega}(j_1,\dots ,j_d)
%&\equiv |f (j_{\sigma ^{-1}(1)},\dots ,j_{\sigma ^{-1}(d)})
%\omega (j_{\sigma ^{-1}(1)},\dots ,j_{\sigma ^{-1}(d)})|,
%\intertext{and}
%g_{k,\omega}(j_{k+1},\dots ,j_d) &\equiv \nm {g_{k-1,\omega }(\cdo ,
%j_{k+1},\dots ,j_d) }
%{\ell ^{q_k}(\mathbf Ze_k)},
%\quad k=1,\dots ,d-1 .
%\end{align*}
%%%
%We set
%$$
%\max \mabfq =\max (q_1,\dots q_d)
%\quad \text{and}\quad
%\min \mabfq =\min (q_1,\dots q_d).
%$$
%If in addition $\mabfq = [1,\infty ]^d$, then let $\mabfq '=(q_1',\dots ,q_d')$, where
%$q_k'\in [1,\infty ]$ should satisfy $\frac 1{q_k}+\frac 1{q_k'}=1$ for every $k=1,\dots ,d$.
%
%\par
%
%We let $\ell ^{\mabfq}_{\sigma ,(\omega )}(\Lambda _E)$ be the set of all
%$f\in \ell ^\infty _{loc}(\Lambda _E)$ such that $\nm f{\ell ^{\mabfq}_{\sigma ,(\omega )}}<\infty$.
%We also set $\ell ^{\mabfq}_\sigma =\ell ^{\mabfq}_{\sigma ,(\omega )}$ when $\omega =1$.
%The spaces $L^{\mabfq}_{\sigma ,(\omega )}(\rr d)$ and $L^{\mabfq}_\sigma (\rr d)$ are defined
%analogously (cf. \cite{Toft10}).

\par

We begin with the following duality result.

\par

\begin{thm}\label{Thm:PerDual}
Let $E\subseteq \rr d$ be a non-degenerate parallelepiped, $\mathcal O_0$
be as in Definition \ref{Def:MixedLebSpaces}, $\mabfq \in [1,\infty ]^d$,
$\tau \in \operatorname{S_d}$, and let $\omega\in \mascP _E(\rr d)$.
%
%Let $E\subseteq \rr d$ is a non-degenerate parallelepiped, $\sigma \in
%\operatorname {S}_d$, $\omega$ be a weight on $\rr d$, and
%let $\mabfq \in [1,\infty]^d$.
%
Then the following is true:
\begin{enumerate}
\item The form $(\cdo ,\cdo )_E$ from $\maclE _0^E(\rr d)\times \maclE _0^E(\rr d)$
to $\mathbf C$ extends to a continuous map from
$$
\maclE ^E(\omega , \ell ^{\mabfq }_{\mathcal O_0,\tau}(\Lambda _E') )\times
\maclE ^E(1/\omega , \ell ^{\mabfq '}_{\mathcal O_0,\tau}(\Lambda _E') )
$$
to $\mathbf C$. If in addition $\min \mabfq >1$ or $\max \mabfq <\infty$, then the
extension is unique.

\vrum

\item if $\max \mabfq <\infty$, then the dual of
$\maclE ^E(\omega , \ell ^{\mabfq }_{\mathcal O_0,\tau}(\Lambda _E') )$ can be identified by
$\maclE ^E(1/\omega , \ell ^{\mabfq }_{\mathcal O_0,\tau}(\Lambda _E') )$ through the form
$(\cdo ,\cdo )_E$.
\end{enumerate}
\end{thm}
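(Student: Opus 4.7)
The plan is to reduce everything to the classical duality for mixed-norm sequence spaces via the isometric identification
$$ J\colon f \longmapsto \{c(f,\alpha)\omega(\alpha)\}_{\alpha\in\Lambda_E'}, $$
built into Definition \ref{Def:PerQuasiBanachSpaces}, which realizes $\maclE^E(\omega,\ell^{\mabfq}_{\mathcal O_0,\tau}(\Lambda_E'))$ as an isometric copy of $\ell^{\mabfq}_{\mathcal O_0,\tau}(\Lambda_E')$, and similarly with $\omega$ replaced by $1/\omega$ and $\mabfq$ by $\mabfq'$.

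For (1), I would first rewrite, for $f,\phi\in \maclE_0^E(\rr d)$,
$$ (f,\phi)_E = \sum_{\alpha\in\Lambda_E'} \bigl(c(f,\alpha)\omega(\alpha)\bigr)\overline{\bigl(c(\phi,\alpha)/\omega(\alpha)\bigr)}, $$
and apply H\"older's inequality for the iterated mixed-norm space in the coordinate order prescribed by $\mathcal O_0,\tau$ to obtain
$$ |(f,\phi)_E| \le \|f\|_{\maclE^E(\omega,\ell^{\mabfq}_{\mathcal O_0,\tau})}\|\phi\|_{\maclE^E(1/\omega,\ell^{\mabfq'}_{\mathcal O_0,\tau})}. $$
This yields the continuous extension. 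For uniqueness, trigonometric polynomials in $\maclE_0^E(\rr d)=\maclG_0^E(\rr d)$ correspond under $J$ to finitely supported sequences, and the latter are norm-dense in $\ell^{\mabfq}_{\mathcal O_0,\tau}$ precisely when $\max\mabfq<\infty$. Hence if $\max\mabfq<\infty$ we have density in the first factor, while if $\min\mabfq>1$ we have $\max\mabfq'<\infty$ and density in the second factor; in either case uniqueness follows from the joint continuity.

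For (2), assuming $\max\mabfq<\infty$, one direction is immediate from (1). For the converse, a continuous functional $L$ on $\maclE^E(\omega,\ell^{\mabfq}_{\mathcal O_0,\tau})$ is transported by $J$ to a continuous functional on $\ell^{\mabfq}_{\mathcal O_0,\tau}(\Lambda_E')$. The classical duality $(\ell^{\mabfq}_{\mathcal O_0,\tau})^* = \ell^{\mabfq'}_{\mathcal O_0,\tau}$, valid when $\max\mabfq<\infty$ via iterated Riesz representation on each factor $\ell^{q_k}(\mathbf Z)$, provides a unique $b\in\ell^{\mabfq'}_{\mathcal O_0,\tau}(\Lambda_E')$ representing the functional under the standard pairing. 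Setting $c(\phi,\alpha)=\omega(\alpha)b(\alpha)$ defines the required $\phi\in\maclE^E(1/\omega,\ell^{\mabfq'}_{\mathcal O_0,\tau})$, and from the density just established this $\phi$ is unique.

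The main obstacle is essentially bookkeeping: verifying the iterated mixed-norm H\"older inequality and the associated duality result for $\ell^{\mabfq}_{\mathcal O_0,\tau}(\Lambda_E')$ in the precise ordering fixed by $\mathcal O_0$ and $\tau$. These are proved by induction on the dimension $d$, reducing one coordinate at a time; the hypothesis $\max\mabfq<\infty$ is used at each step to invoke the Riesz representation on the outer $\ell^{q_k}(\mathbf Z)$ factor, $1\le q_k<\infty$, while care must be taken that the dual basis $e_1',\dots,e_d'$ used in constructing $\Lambda_E'$ does not interfere with the permutation $\tau$ acting on the coordinate axes. Once this induction is in place, the rest of the argument is a transparent transport of the classical $\ell^p$--$\ell^{p'}$ duality through the Fourier coefficient isomorphism $J$, together with Theorem \ref{Thm:PerMod} which identifies these abstract spaces with $E$-periodic elements of modulation spaces.
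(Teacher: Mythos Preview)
Your proposal is correct and follows essentially the same approach as the paper: both reduce the problem to the classical duality for mixed-norm sequence spaces by identifying $\maclE^E(\omega,\ell^{\mabfq}_{\mathcal O_0,\tau}(\Lambda_E'))$ with $\ell^{\mabfq}_{\mathcal O_0,\tau}(\Lambda_E')$ via the Fourier coefficient map. The paper's proof is in fact a two-sentence sketch that simply invokes this identification and the known properties of mixed-norm spaces, so your write-up is a more explicit version of the same argument; the closing reference to Theorem~\ref{Thm:PerMod} is not needed here (it is used only in the subsequent corollary).
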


\par

\begin{proof}
By the definitions we may identify $\maclE ^E(\omega , \ell ^{\mabfq }
_{\mathcal O_0,\tau}(\Lambda _E'))$ and the form $(\cdo ,\cdo )_E$
with $\ell ^{\mabfq }_{\mathcal O_0,\tau}(\Lambda _E')$ and the 
form $(\cdo ,\cdo )_{\ell ^2(\Lambda _E')}$. The result now follows from
the fact that similar properties hold true for mixed normed Lebesgue spaces.
\end{proof}

\par

\begin{cor}
Let $E\subseteq \rr d$ be a non-degenerate parallelepiped, $\mathcal O_0$
be as in Definition \ref{Def:MixedLebSpaces}, $\mabfq \in [1,\infty )^d$,
$\tau \in \operatorname{S_d}$, and let $\omega\in \mascP _E(\rr d)$.
%
%Let $E\subseteq \rr d$ is a non-degenerate parallelepiped, $\sigma \in
%\operatorname {S}_d$, $\omega \in \mascP _E(\rr d)$, and
%let $\mabfq \in [1,\infty )^d$.
%
Then the form $(\cdo ,\cdo )_E$
from $\maclE _0^E(\rr d)\times \maclE _0^E(\rr d)$
to $\mathbf C$ extends uniquely to a continuous map from
$$
M^\infty (\omega , L^{\mabfq }_{\mathcal O_0,\tau}(\rr d) )
\bigcap (\maclE _0^E)'(\rr d)\times
M^\infty (1/\omega , L^{\mabfq '}_{\mathcal O_0,\tau}(\rr d) )
\bigcap (\maclE _0^E)'(\rr d)
$$
to $\mathbf C$, and the dual of
$M^\infty (\omega , L^{\mabfq }_{\mathcal O_0,\tau}(\rr d) )\bigcap (\maclE _0^E)'(\rr d)$
can be identified by $M^\infty (1/\omega , L^{\mabfq '}_{\mathcal O_0,\tau}(\rr d) )
\bigcap (\maclE _0^E)'(\rr d)$ through this form.

\par

In particular, if $q\in [1,\infty )$ and $\omega _0(x,\xi )=\omega (\xi )$, then the dual
of $M^{\infty ,q}_{(\omega _0)} (\rr d)\bigcap (\maclE _0^E)'(\rr d)$
can be identified by $M^{\infty ,q'}_{(1/\omega _0)} (\rr d)\bigcap (\maclE _0^E)'(\rr d)$.
\end{cor}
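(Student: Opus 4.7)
The plan is to reduce the corollary to Theorems~\ref{Thm:PerMod} and~\ref{Thm:PerDual}. Since $\mabfq \in [1,\infty)^d$ we have $\max \mabfq < \infty$ and $\mabfq' \in (1,\infty]^d$, so Theorem~\ref{Thm:PerMod} applies to both $(\omega, \mabfq)$ and $(1/\omega, \mabfq')$ and yields canonical topological identifications
$$
M^\infty(\omega, L^{\mabfq}_{\mathcal O_0,\tau}(\rr d)) \cap (\maclE_0^E)'(\rr d) \simeq \maclE^E(\omega, \ell^{\mabfq}_{\mathcal O_0,\tau}(\Lambda_E'))
$$
and the analogous one with $1/\omega$ and $\mabfq'$, each implemented by the Fourier coefficient map $f \mapsto \{c(f,\alpha)\}_{\alpha \in \Lambda_E'}$.

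Under these identifications the form $(f,\psi)_E = \sum_{\alpha\in\Lambda_E'} c(f,\alpha)\overline{c(\psi,\alpha)}$ on $\maclE_0^E(\rr d)\times \maclE_0^E(\rr d)$ coincides with the sequence-space pairing treated in Theorem~\ref{Thm:PerDual}. Because $\max \mabfq < \infty$, Theorem~\ref{Thm:PerDual}(1) furnishes the unique continuous extension of $(\cdot,\cdot)_E$ to the product of the two sequence spaces, and Theorem~\ref{Thm:PerDual}(2) identifies the dual of $\maclE^E(\omega, \ell^{\mabfq}_{\mathcal O_0,\tau}(\Lambda_E'))$ with $\maclE^E(1/\omega, \ell^{\mabfq'}_{\mathcal O_0,\tau}(\Lambda_E'))$ through this pairing. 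Transferring back by the Fourier coefficient isomorphisms gives the first half of the corollary.

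For the concluding specialization, observe that when $\omega_0(x,\xi) = \omega(\xi)$ depends only on $\xi$ and $q \in [1,\infty)$, the quasi-norm $\nm f{M^{\infty,q}_{(\omega_0)}}$ equals $\nm f{M^\infty(\omega_0, L^{\mabfq}_{\mathcal O_0,\tau}(\rr d))}$ when $\mabfq = (q,\dots,q)$, for any choice of $\mathcal O_0$ and $\tau$, since the essential supremum over $x$ collapses the first $d$ slots of the iterated norm and the remaining $d$ slots reduce to the scalar $L^q$ norm. Applying the general statement in this specialization yields the claimed duality between $M^{\infty,q}_{(\omega_0)}(\rr d) \cap (\maclE_0^E)'(\rr d)$ and $M^{\infty,q'}_{(1/\omega_0)}(\rr d) \cap (\maclE_0^E)'(\rr d)$.

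The only delicate point, more bookkeeping than genuine obstacle, is verifying that the integral form $(\cdot,\cdot)_E$ from Section~\ref{sec1} agrees with $\sum c(f,\alpha)\overline{c(\psi,\alpha)}$ on the dense subspace $\maclE_0^E(\rr d)\times \maclE_0^E(\rr d)$; this is immediate from Bessel's identity applied on $L^2(E)$, after which continuity together with density of trigonometric polynomials in the sequence spaces (available because $\max \mabfq < \infty$) transports the agreement to the full extension and secures the uniqueness assertion automatically.
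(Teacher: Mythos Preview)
Your approach is essentially the same as the paper's: the paper's proof is the single line ``The result follows by combining Theorems~\ref{Thm:PerDual} with~\ref{Thm:PerMod},'' and you have spelled out precisely how that combination works. One small notational slip: in the specialization paragraph you write $M^\infty(\omega_0, L^{\mabfq}_{\mathcal O_0,\tau}(\rr d))$, but in the paper's convention for $M^\infty(\omega,\mascB)$ the weight $\omega$ lives on $\rr d$ (the $\xi$-variable only), so you should write $M^\infty(\omega, L^q(\rr d))$ with $\omega$ rather than the $2d$-weight $\omega_0$; also there is no ``collapsing of the first $d$ slots,'' since the $\esssup_x$ is already built into the definition of $M^\infty(\omega,\mascB)$ and $\mascB$ acts only in $\xi$.
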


\begin{proof}
The result follows by combining Theorems \ref{Thm:PerDual} with \ref{Thm:PerMod}.
\end{proof}

\par


\begin{thebibliography}{99}
\bibitem{ChuChuKim} {J. Chung, S.-Y. Chung, D. Kim},
\emph{Characterizations of the Gelfand-Shilov spaces via
Fourier transforms}, Proc. Amer. Math. Soc.
\textbf{124} (1996), 2101--2108.

\bibitem{CPRT10} E. Cordero, S. Pilipovi\'c, L. Rodino, N. Teofanov
\emph{Quasianalytic Gelfand-Shilov spaces with applications
to localization operators}, Rocky Mt. J. Math. \textbf{40} (2010), 
1123-1147.

\bibitem{DaRu1} A. Dasgupta, M. Ruzhansky
\emph{Gevrey functions and ultradistributions on compact Lie groups
and homogeneous spaces},
Bull. Sci. Math. \textbf{138} (2014), 756--782.

\bibitem{DaRu2} A. Dasgupta, M. Ruzhansky
\emph{Eigenfunction expansions of ultradifferentiable functions and
ultradistributions}, Trans. Amer. Math. Soc. \textbf{368} (2016), 8481--8498.

\bibitem{Eij} S. J. L. Eijndhoven \emph{Functional analytic
characterizations of the Gelfand-Shilov spaces $S^\beta _\alpha$},
Nederl. Akad. Wetensch. Indag. Math. \textbf{49} (1987), 133--144. 

\bibitem{F1}  H. G. Feichtinger \emph{Modulation spaces on locally
compact abelian groups. Technical report}, {University of
Vienna}, Vienna, 1983; also in: M. Krishna, R. Radha,
S. Thangavelu (Eds) Wavelets and their applications, Allied
Publishers Private Limited, NewDehli Mumbai Kolkata Chennai Nagpur
Ahmedabad Bangalore Hyderbad Lucknow, 2003, pp. 99--140.

\bibitem{Fei5} H. G. Feichtinger \emph{Modulation spaces: Looking
back and ahead}, Sampl. Theory Signal Image Process. \textbf{5}
(2006), 109--140.

\bibitem{FiRu} V. Fischer, M. Ruzhansky \emph{Quantization on nilpotent
Lie groups}, Birkh{\"a}user, Progress in Mathematics \textbf{314}, Boston, 2016.

\bibitem{GaSa} Y. V. Galperin, S. Samarah \emph{Time-frequency analysis
on modulation spaces $M^{p,q}_m$, $0<p,q\le \infty$}, Appl. Comput.
Harmon. Anal. \textbf{16} (2004), 1--18.

\bibitem{GaRu} C. Garetto, M. Ruzhansky \emph{Wave equation for sums of squares
on compact Lie groups}, J. Differential Equations \textbf{258} (2015), 4324--4347.

\bibitem{GS} I. M. Gelfand, G. E. Shilov, \emph{Generalized functions, II-III},
Academic Press, NewYork London, 1968.

\bibitem{Go} V. I. Gorba{\v c}uk \emph{On Fourier series of periodic ultradistributions},
Ukranian Math. J. \textbf{34} (1982), 144--150. (Russian)

\bibitem{GoGo} V. I. Gorba{\v c}uk, M. L. Gorba{\v c}uk \emph{Trigonometric series and
generalized functions}, Dokl. Adad. Nauk SSSR \textbf{257} (1981), 799--804.

\bibitem{Gc2} {K. H. Gr{\"o}chenig} \newblock \emph{Foundations of
Time-Frequency Analysis},
\newblock Birkh{\"a}user, Boston, 2001.

\bibitem{Gc2.5} K. Gr\"ochenig \emph{Weight functions in time-frequency analysis
\rm {in: L. Rodino, M. W. Wong (Eds) Pseudodifferential
Operators: Partial Differential Equations and Time-Frequency Analysis}},
Fields Institute Comm., \textbf{52} 2007, pp. 343--366.

\bibitem{GZ} K. Gr{\"o}chenig, G. Zimmermann
\emph{Spaces of test functions via the STFT} J. Funct. Spaces Appl. \textbf{2}
(2004), 25--53.

\bibitem{Ho1} L. H{\"o}rmander \emph{The Analysis of Linear
Partial Differential Operators}, vol {I--III},
Springer-Verlag, Berlin Heidelberg NewYork Tokyo, 1983, 1985.

\bibitem{Pil1} S. Pilipovi\'c \emph{Generalization of Zemanian spaces
of generalized functions which
have orthonormal series expansions},
SIAM J. Math. Anal. \textbf{17} (1986), 477–484.

\bibitem{Pil2} S. Pilipovi\'c \emph{Structural theorems 
for periodic ultradistributions},
Proc. Amer. Math. Soc. \textbf{98} (1986), 261--266.

\bibitem{Pil3}
S. Pilipovi\'c \emph{Tempered ultradistributions},
Boll. U.M.I. \textbf{7} (1988), 235--251.

\bibitem{Re} M. Reich \emph{A non-analytic superposition result on
Gevrey-modulation spaces}, Diploma thesis, Technische Universit{\"a}t
Bergakademie Freiberg, Germany, Angewandte Mathematik Registration list
\textbf{51765}.

%\bibitem{ReReSi} M. Reich, M. Reissig, W. Sickel 
%\emph{Non-analytic superposition results on modulation spaces with
%subexponential weights}, J. Pseudo-Differ. Oper. Appl. \textbf{7} (2016),
%365--409.

\bibitem{RuSuToTo} M. Ruzhansky, M. Sugimoto, J. Toft, N. Tomita
\emph{Changes of variables in modulation and Wiener amalgam spaces},
Math. Nachr. \textbf{284} (2011), 2078--2092.

\bibitem{RuTu1} M. Ruzhansky, V. Turunen
\emph{On the toroidal quantization of periodic pseudo-differential operators},
Numerical Functional Analysis and Optimization \textbf{30} (2009), 1098--1124.

\bibitem{RuTu2} M. Ruzhansky, V. Turunen
\emph{Quantization of pseudo-differential operators on the torus},
J. Fourier Anal. Appl. \textbf{16} (2010), 943--982.

\bibitem{RuTu3} M. Ruzhansky, V. Turunen
\emph{Pseudo-differential operators and symmetries. Background
analysis and advanced topics}, Pseudo-Differential Operators.
Theory and Applications \textbf{2}. Birkh{\"a}user Verlag, Basel,
2010.

\bibitem{Toft10} J. Toft \emph{The Bargmann transform on modulation and
Gelfand-Shilov spaces, with applications to Toeplitz and
pseudo-differential operators}, J. Pseudo-Differ. Oper. Appl. 
\textbf{3} (2012), 145--227.

%\bibitem{Toft12} J. Toft \emph{Multiplication properties in Gelfand-Shilov
%pseudo-differential calculus {\rm {in: S. Molahajlo, S. Pilipovi{\'c}, J. Toft,
%M. W. Wong (eds)}} Pseudo-Differential Operators, Generalized
%Functions and  Asymptotics,} Operator Theory: Advances and
%Applications \textbf{231}, Birkh{\"a}user, Basel Heidelberg NewYork
%Dordrecht London, 2013, pp. 117--172.

\bibitem{Toft15} J. Toft
\emph{Gabor analysis for a broad class of quasi-Banach modulation
spaces {\rm {in: S. Pilipovi{\'c}, J. Toft (eds)}},
Pseudo-differential operators, generalized functions},
Operator Theory: Advances and Applications \textbf{245},
Birkh{\"a}user, 2015, 249--278.

\bibitem{Toft18} J. Toft
\emph{Images of function and distribution spaces under the
Bargmann transform}, J. Pseudo-Differ. Oper. Appl. (Appeared online 2016).

\bibitem{Zi} Z. Ziele{\'z}ny \emph{On formal trigonmetrical series},
Studia Math. \textbf{24} (1964), 305--310.
\end{thebibliography}
\end{document}